\DeclareMathOperator{\ric}{Ric}
\DeclareMathOperator{\scal}{scal}
\DeclareMathOperator{\tr}{tr}
\DeclareMathOperator{\hess}{Hess}
\newcommand{\Aut}{\text{Aut}}
\DeclareMathOperator{\GL}{GL}
\newcommand{\Ad}{\text{Ad}}
\DeclareMathOperator{\Span}{Span}
\DeclareMathOperator{\ad}{ad}
\DeclareMathOperator{\Diag}{Diag}
\DeclareMathOperator{\vol}{vol}
\DeclareMathOperator{\Diff}{Diff}
\DeclareMathOperator{\Div}{div}
\DeclareMathOperator{\Iso}{Iso}
\DeclareMathOperator{\Sp}{Sp}
\DeclareMathOperator{\SU}{SU}
\newtheorem*{theorem*}{Theorem}
\newtheorem{theorem}{Theorem}[section]
\newtheorem{proposition}[theorem]{Proposition}
\newtheorem{corollary}[theorem]{Corollary}
\newtheorem{lemma}[theorem]{Lemma}
\theoremstyle{definition}
\newtheorem{definition}[theorem]{Definition}
\theoremstyle{remark}
\newtheorem{remark}[theorem]{Remark}
\numberwithin{equation}{section}
\begin{document}

\title[On Homogeneous Closed Gradient Laplacian Solitons]{On Homogeneous Closed Gradient Laplacian Solitons}

\author{}
\address{}
\curraddr{}
\email{}
\thanks{}

\author{Nicholas Ng}
\address{215 Carnegie Building\\
 Dept. of Mathematics, Syracuse University\\
 Syracuse, NY, 13244.}
\email{nng102@syr.edu}
\urladdr{\url{https://sites.google.com/view/nicholas-ng-math}}
\thanks{The author was partially supported by grant NSF-\#1654034}

\subjclass[2020]{53C24}
\keywords{almost abelian solvmanifold, closed $G_2$-structure, homogeneous, gradient Laplacian soliton, Laplacian flow, nilpotent, structure theorem}

\date{}

\dedicatory{}

\begin{abstract} 
We prove a structure theorem for homogeneous closed gradient Laplacian solitons and use it to show some examples of closed Laplacian solitons cannot be made gradient. More specifically, we show that the Laplacian solitons on nilpotent Lie groups found by Nicolini in \cite{Nic18} are not gradient up to homothetic $G_2$-structures except for $N_1$, where $f$ must be a Gaussian. We also show that the closed $G_2$-structure $\varphi_{12}$ on $N_{12}$ constructed in \cite{FFM16} cannot be a gradient soliton. We further show that closed non-torsion-free gradient Laplacian solitons on almost abelian solvmanifolds are isometric to products $N \times \mathbb R^k$ with $f$ constant on $N$.  
\end{abstract}

\maketitle

\section{Introduction}

\subsection{$G_2$-structures and the Laplacian flow}

Let $M$ be a differentiable $7$-manifold. A $3$-form $\varphi \in \Omega^3(M)$ is a \textit{$G_2$-structure} if at each point $p \in M$, $\varphi_p$ is \textit{positive}, i.e., there exists a basis $(e_i)_{i = 1}^7$ of $T_pM$ such that $$\varphi_p = e^{127} + e^{347} + e^{567} + e^{135} - e^{146} - e^{236} - e^{245},$$ where $e^{ijk} = e^i \wedge e^j \wedge e^k$ and $(e^i)_i$ is the dual basis to $(e_i)_i$. Any such $3$-form $\varphi$ induces a Riemannian metric and an orientation via $$g_\varphi(X,Y)\vol_\varphi = \frac{1}{6}\iota_X(\varphi)\wedge\iota_Y(\varphi)\wedge \varphi,$$ which in turn determines a \textit{Hodge star operator} $*_\varphi:\Omega^k(M) \rightarrow \Omega^{7 - k}(M)$. It is a well known fact that a smooth $7$-manifold admits a $G_2$-structure if and only if it is orientable and spin (see [Proposition 4.18, \cite{Kar09}]).

The unique \textit{torsion forms} $\tau_i \in \Omega^i(M)$, $i = 0, 1,2 ,3$ of a $G_2$-structure $\varphi$ are the components of the \textit{intrinsic torsion} $\nabla \varphi$. \textit{Torsion-free} (or \textit{parallel}) $G_2$-structures, i.e., $\varphi$ such that $\nabla \varphi = 0$ (or equivalently $d\varphi = 0$ and $d*\varphi = 0$; $\tau_i = 0$ for all $i = 0, 1, 2, 3$), have the property that its induced metric $g_\varphi$ has holonomy in $G_2$. Existence of such metrics were suggested by Berger's classification theorem in 1955 and first examples were constructed by Bryant-Salamon in the 1980s. Metrics with holonomy in $G_2$ are hard to find, yet are desirable as they are necessarily Ricci-flat and are important in string theory. We note that many examples of such metrics have since been constructed (see, e.g., references in section 6.2, \cite{Kar20}).

\textit{Closed} (or \textit{calibrated}) $G_2$-structures have the property that $d \varphi = 0$. This class of $G_2$-structures are of interest as they are ``close'' to being torsion-free in the sense that  $\tau_2$, which we write as $\tau_\varphi$, is the only surviving torsion form. Bryant in \cite{Bry06} introduced a natural geometric flow of $G_2$-structures called the \textit{Laplacian flow} given by
$$\begin{cases}
\partial_t\varphi(t) = \Delta_{\varphi(t)}\varphi(t) \\
\varphi(0) = \varphi
\end{cases},$$ 
where $\varphi$ is the initial $3$-form and  $\Delta_{\varphi(t)} =  *_\varphi d *_\varphi d  - d *_\varphi d *_\varphi$ is the  \textit{Hodge Laplacian operator} on $3$-forms. By studying how closed $G_2$-structures deform to torsion-free $G_2$-structures under the flow, Bryant and his collaborators thought that it may help in constructing metrics with holonomy in $G_2$.

It is known that a Laplacian flow solution $\varphi(t)$ starting at $\varphi$ is \textit{self-similar}, i.e., $\varphi(t) = c(t)f(t)^*\varphi$ where $c(t) \in \mathbb R^*$ and $f(t) \in \mathfrak \Diff(M)$,  if and only if \begin{equation}\label{eq:1.1} \Delta_{\varphi}\varphi = \lambda\varphi + \mathcal L_X\varphi, \, \, \, \, \, \, \,  \, \, \, \lambda \in  \mathbb R, \, \, X \in \mathfrak X(M) \, \, \text{complete}.\end{equation} A triple $(\varphi, X, \lambda)$ satisfying (\ref{eq:1.1}) is called a \textit{Laplacian soliton}. The Laplacian soliton is said to be \textit{steady, shrinking, or expanding} if $\lambda = 0$, $\lambda < 0$, or $\lambda > 0$, respectively. A Laplacian soliton is \textit{gradient} if $X$ is a gradient field, i.e., $X = \nabla f$ for some smooth function $f:M \rightarrow \mathbb R$. In the last decade, Laplacian solitons on homogeneous spaces have received increased interest and many new examples have been found (see, e.g., \cites{FR20, Lau17a, Lau17b, LN20, Nic18}, and \cite{Nic22}).

Given a Laplacian flow, there is a corresponding flow equation of the induced metric $g_\varphi$, which can be interpreted as a perturbation of the Ricci flow by quadratic terms in $\tau_\varphi$. Lotay-Wei in \cite{LW17} showed that via an injective linear map, one obtains an equation in terms of the metric from the Laplacian soliton equation of the form \begin{equation}\label{eq:1.2}\frac{1}{2}\mathcal L_Xg_\varphi = -q_\varphi - \frac{1}{3}\lambda g_\varphi, \, \, \, \, \, \lambda \in \mathbb R, \, \, X \in \mathfrak X(M),\end{equation} where $q_\varphi$ is a symmetric $2$-tensor involving the (Ricci) curvature and the square of the skew-symmetric torsion form $\tau_\varphi$ for closed $G_2$-structures, $\tau_\varphi^2$. We also call (\ref{eq:1.2}) the Laplacian soliton equation (in terms of the induced metric $g_\varphi$). When $X = \nabla f$ is a gradient field, (\ref{eq:1.2}) is the \textit{gradient Laplacian soliton equation} and we call the triple $(\varphi, \nabla f, \lambda)$ satisfying (\ref{eq:1.2}) a \textit{gradient Laplacian soliton}. An open question is whether there exists compact Laplacian solitons that are gradient. In fact, the existence of non-trivial Laplacian solitons on compact $7$-manifolds is still open. Motivated by lines of inquiry suggested by Lauret and others, we investigate \textit{closed gradient Laplacian solitons} on homogeneous spaces.

\subsection{Summary of Main Results}

By applying a structure theorem of Petersen-Wylie [Theorem 3.6, \cite{PW22}] in the setting of gradient Laplacian solitons, we obtain the following structure theorem for homogeneous closed gradient Laplacian solitons.

\begin{theorem}[Structure Theorem] Let $(M, \varphi)$ be a $7$-dimensional homogeneous space admitting a closed gradient Laplacian soliton $(\varphi, \nabla f, \lambda)$ where $f$ non-constant.

\begin{enumerate}
    \item The square of the intrinsic torsion $\tau_\varphi^2$ is divergence-free if and only if $(M, g_\varphi)$ is isometric to a product $N \times \mathbb R^k$ where $f$ is constant on $N$.
    
    \item If the square of the intrinsic torsion $\tau_\varphi^2$ is not divergence-free, then either 
    
    \begin{enumerate}
    
        \item $(M, g_\varphi)$ is a one-dimensional extension; $g_\varphi = dr^2 + g_r$; and $f(x, y) = ar + b$; or

        \item $(M, g_\varphi)$ is isometric to a product $N \times \mathbb R^k$ where $N$ is a one-dimensional extension and $f(x, y) = ar(x) + v(y)$, where $v$ is a function on $\mathbb R^k$ and $r$ is a distance function on $N$. 
    \end{enumerate}
\end{enumerate}

\end{theorem}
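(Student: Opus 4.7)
The plan is to translate the gradient Laplacian soliton equation into a Hessian equation and apply the Petersen-Wylie structure theorem cited in the introduction. Starting from (\ref{eq:1.2}) with $X = \nabla f$, the soliton equation becomes $\hess f = -q_\varphi - \tfrac{1}{3}\lambda g_\varphi$. Since $(M,\varphi)$ is homogeneous, all pointwise scalar invariants of $q_\varphi$, $\tau_\varphi^2$, and $\ric$ are constant, so $q_\varphi$ is a symmetric $2$-tensor with constant eigenvalues; this places the right-hand side in the setting of [Theorem 3.6, \cite{PW22}]. Tracing the equation yields $\Delta f = -\tr q_\varphi - \tfrac{7\lambda}{3}$, a constant, and taking the divergence via the contracted second Bianchi identity gives $\Div(\hess f) = \ric(\nabla f, \cdot) + d(\Delta f) = \ric(\nabla f, \cdot)$. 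Combining this with the Lotay-Wei identity for $q_\varphi$ and the closed-$G_2$ formula for $\ric$ in terms of $\tau_\varphi^2$ shows that $\Div(q_\varphi)$ is proportional to $\Div(\tau_\varphi^2)$, which justifies organizing the proof around the dichotomy appearing in the theorem statement.

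Next I would apply [Theorem 3.6, \cite{PW22}] case by case. In case (1), when $\Div(\tau_\varphi^2) = 0$, the tensor $\hess f + \tfrac{1}{3}\lambda g_\varphi = -q_\varphi$ is divergence-free, which places us in the rigid branch of Petersen-Wylie. Their theorem then produces a de Rham splitting $(M, g_\varphi) \cong N \times \mathbb R^k$ with $\nabla f$ tangent to the Euclidean factor, so $f$ is constant on $N$. The converse is immediate: on such a product, homogeneity of $N$ makes $q_\varphi|_N$ divergence-free, while $f$ depends only on the flat factor where all relevant tensors are parallel.

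The main obstacle is case (2). Here Petersen-Wylie still applies but gives the weaker conclusion that $(M, g_\varphi)$ admits a one-dimensional extension $dr^2 + g_r$, possibly after splitting off a flat $\mathbb R^k$ factor. The delicate point is identifying the precise form of $f$. Homogeneity forces $|\nabla f|$ to be a constant, and combined with $\hess f = -q_\varphi - \tfrac{1}{3}\lambda g_\varphi$ and the invariance of $f$ along the leaves of the $r$-foliation, one deduces that $\nabla f$ restricted to $N$ is parallel to $\partial_r$ with constant norm $a$. Hence $f$ must be linear in $r$ on the $N$-factor. If $k = 0$ this yields $f = ar + b$ as in (a); otherwise the free Euclidean summand contributes an arbitrary smooth additive function $v(y)$ (since $\nabla v$ sits in the kernel of the $q_\varphi$ portion coming from $\mathbb R^k$), giving $f(x,y) = ar(x) + v(y)$ as in (b).
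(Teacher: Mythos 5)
Your overall strategy is the same as the paper's: rewrite the soliton equation as $\hess f = q$ with $q = -q_\varphi - \tfrac{1}{3}\lambda g_\varphi$, show $\Div q$ is proportional to $\Div \tau_\varphi^2$ via the contracted Bianchi identity and constancy of $\scal_\varphi$, and then invoke [Theorem 3.6, PW22]. However, three of your supporting steps are not right as stated. First, Petersen--Wylie require $q$ to be a \emph{$G$-invariant} symmetric $2$-tensor; ``constant eigenvalues'' is strictly weaker than invariance and does not by itself place you in their setting. The correct (and easy) argument is that $q$ is built naturally from $\varphi$: since $\gamma^*\varphi = \varphi$ for $\gamma \in G$, one gets $\gamma^* g_\varphi = g_\varphi$, $\gamma^*\ric = \ric$, and $\gamma^*\tau_\varphi = \tau_{\gamma^*\varphi} = \tau_\varphi$, hence $\gamma^* q = q$. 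Second, your converse in case (1) --- ``homogeneity of $N$ makes $q_\varphi|_N$ divergence-free'' --- invokes a false principle: homogeneity does not force a natural tensor to be divergence-free (the paper's examples $\mathfrak n_5$ and $\mathfrak n_7$ are homogeneous with $\Div\tau_\varphi^2 \neq 0$; indeed, if homogeneity sufficed, the dichotomy in the theorem would be vacuous). The converse actually follows from the Bochner formula: $\Div q = \Div(\hess f) = \ric(\nabla f) + \nabla \Delta f = 0$, because $\nabla f$ is tangent to the flat factor so $\ric(\nabla f) = 0$, and $\Delta f = \tr q$ is constant by homogeneity and invariance of $q$.

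Third, in case (2) you assert that ``homogeneity forces $|\nabla f|$ to be a constant.'' This is false: $f$ is not $G$-invariant (it is non-constant on a homogeneous space), and in case 2(b) the function $v$ on the $\mathbb R^k$ factor satisfies a Hessian equation that permits quadratic growth, so $|\nabla f|^2 = a^2 + |\nabla v|^2$ need not be constant. You do not need this derivation at all: the precise forms $f = ar + b$ and $f(x,y) = ar(x) + v(y)$ are part of the \emph{conclusion} of [Theorem 3.6, PW22] as quoted, so once $q$ is verified to be a $G$-invariant symmetric $2$-tensor and $\Div q = -\tfrac{1}{2}\Div\tau_\varphi^2$ is established, the case analysis of the theorem follows directly from the cited result without re-deriving the potential function.
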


\begin{remark} Podest\`a-Raffero showed that there are no compact homogeneous $7$-manifolds with $G$-invariant closed $G_2$-structure in \cite{PR19}. Hence there are no compact homogeneous closed gradient Laplacian solitons. One can also see this from the Structure Theorem as each structure has a factor of $\mathbb R^k$.
\end{remark}

\begin{remark} When $\tau_\varphi^2$ is divergence-free, the Structure Theorem implies that if $(\varphi, \nabla f, \lambda)$ is a gradient Laplacian soliton, then the restrictions of soliton equation $$q_{\varphi}\big|_N = -\frac{1}{3}\lambda g_\varphi\big|_N \, \, \, \, \, \text{and} \, \, \, \, \, \hess f\big|_{\mathbb R^k} + q_\varphi\big|_{\mathbb R^k} = -\frac{1}{3}\lambda g_\varphi\big|_{\mathbb R^k}$$ must hold simultaneously. We show these equations cannot hold simultaneously in the divergence-free cases on some of the nilpotent Lie groups studied in Section~\ref{sec:3}. This suggests that we should consider one-dimensional extensions for examples of gradient solitons. Manero examined closed $G_2$-structures on one-dimensional extensions with Lie algebra $\mathfrak g = \mathfrak h \oplus_D \mathbb R e_7$ where the derivation $D$ is the real represenation of some $A \in \mathfrak {sl}(3, \mathbb C)$ in \cite{Man15}. Manero showed such closed $G_2$-structures with $D$ of this form is equivalent to $\mathfrak h$ admitting a symplectic half-flat $\SU(3)$-structure. We study gradient Laplacian solitons on one-dimensional extensions in Section~\ref{sec:4}.
\end{remark}

We prove the Structure Theorem along with some preliminary results in Section~\ref{sec:2}. We define the notion of an ``orthogonally nice basis'' and make some observations concerning the computation of $\Div \tau_\varphi^2$. We also obtain a ``Key Lemma'':  $g_\varphi(\ric_\varphi(\nabla f), \cdot) = -2^{-1}\Div\tau_{\varphi}^2(\cdot)$ on homogeneous spaces. Using observations regarding orthogonally nice bases, the notion of a $G_2$-structure being \textit{Laplacian flow diagonal} (introduced by Lauret in \cite{Lau17a}), and the Key Lemma, we obtain a corollary of the Structure Theorem.

\begin{corollary}
Let $G$ be a Lie group with closed $G_2$-structure $\varphi$ that is Laplacian flow diagonal with respect to an orthogonally nice orthonormal basis $(e_i)_i$. Suppose $\ric_{\varphi}$ is diagonal with respect to $(e_i)_i$. If $(\varphi, \nabla f, \lambda)$ is a gradient Laplacian soliton, then $G$ must be a product metric $\mathbb R^k \times N$ with $f$ constant on $N$. If in addition the kernel of the Ricci tensor is trivial, then $\varphi$ is not a gradient soliton.
\end{corollary}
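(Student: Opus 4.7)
The plan is to reduce to case (1) of the Structure Theorem by showing that the combined hypotheses force $\Div \tau_\varphi^2 \equiv 0$, and then use the resulting flat factor together with the Ricci kernel assumption to rule out a nontrivial gradient soliton. The Key Lemma $g_\varphi(\ric_\varphi(\nabla f), \cdot) = -\tfrac{1}{2}\Div \tau_\varphi^2(\cdot)$ is the bridge: it converts a statement about divergence-free torsion into a statement about $\ric_\varphi(\nabla f)$, which interacts with the diagonality hypotheses.

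To carry out the first step, I would expand $\Div \tau_\varphi^2$ in the basis $(e_i)_i$ using the Levi-Civita connection of $G$ and the bracket relations. The orthogonally nice basis assumption severely restricts which structure constants can appear in each $\nabla_{e_i} \tau_\varphi^2(e_i, e_j)$, and the Laplacian flow diagonal property pins down the form of $\tau_\varphi$ in this frame (so that only prescribed components of $\tau_\varphi^2$ contribute). Using the observations from Section~\ref{sec:2} about computing $\Div \tau_\varphi^2$ for orthogonally nice frames, together with the diagonality of $\ric_\varphi$ (which, via the Key Lemma, forces $(\Div \tau_\varphi^2)^\sharp$ to be diagonal as well), the off-diagonal and diagonal pieces should both collapse, yielding $\Div \tau_\varphi^2 \equiv 0$.

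Once the divergence-free condition is in hand, part (1) of the Structure Theorem applies and gives the isometric product decomposition $G \cong \mathbb{R}^k \times N$ with $f$ constant on $N$. For the second assertion, note that the $\mathbb{R}^k$ factor is flat, so every vector tangent to it lies in $\ker \ric_\varphi$. The hypothesis $\ker \ric_\varphi = 0$ therefore forces $k = 0$, in which case $M = N$ and $f$ must be constant on all of $M$; thus $\nabla f = 0$ and $\varphi$ cannot be realised as a nontrivial gradient soliton.

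The principal obstacle is the first step, namely the explicit verification that $\Div \tau_\varphi^2$ vanishes under the three diagonality assumptions. The cancellation should arise from pairing contributions of the form $\nabla_{e_i} \tau_\varphi^2(e_i, e_j)$ whose signs are forced to be opposite by the orthogonally nice structure constants, with the Laplacian flow diagonal hypothesis ensuring the $\tau_\varphi$-components that survive line up correctly. Getting this bookkeeping right---and confirming that the diagonal Ricci hypothesis is exactly what knocks out the remaining diagonal contributions through the Key Lemma---is where the technical preparation of Section~\ref{sec:2} is indispensable.
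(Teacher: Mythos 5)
Your overall route---reduce to the divergence-free case of the Structure Theorem and then use the Ricci kernel hypothesis---is the same as the paper's, and your treatment of the second assertion (the flat factor forces $T\mathbb R^k\subseteq\ker\ric_\varphi$, so trivial kernel gives $k=0$ and $f$ constant) is a valid, mildly different phrasing of the paper's argument, which instead applies the Key Lemma to get $\ric_\varphi(\nabla f)=0$ directly. However, the step you yourself flag as ``the principal obstacle''---showing $\Div\tau_\varphi^2=0$---is left open, and the mechanism you sketch for it does not work. First, $\Div\tau_\varphi^2$ is a $1$-form (a $(0,1)$-tensor), so ``the Key Lemma forces $(\Div\tau_\varphi^2)^\sharp$ to be diagonal'' is not a meaningful statement; the Key Lemma only identifies $\ric_\varphi(\nabla f)$ with $-\tfrac12(\Div\tau_\varphi^2)^\sharp$ and by itself makes neither vanish. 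Second, the Laplacian-flow-diagonal hypothesis does not ``pin down the form of $\tau_\varphi$''; it says only that $Q_{\varphi}(t)$ is diagonal in the given frame, which constrains $\tau_\varphi^2$ only when combined with the diagonality of $\ric_\varphi$. No amount of structure-constant bookkeeping along the lines you describe will close this gap without that algebraic input.

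The missing argument is short and purely algebraic. By equation (2.7), $Q_\varphi=\ric_\varphi-\tfrac13\scal_\varphi I+\tfrac12\tau_\varphi^2$; since $Q_\varphi$ is diagonal at $t=0$ (Laplacian flow diagonal) and $\ric_\varphi$ is diagonal by hypothesis, $\tau_\varphi^2=2\bigl(Q_\varphi-\ric_\varphi+\tfrac13\scal_\varphi I\bigr)$ is diagonal in the frame $(e_i)_i$ (this is the last statement of Proposition 2.8). Then, because the basis is orthogonally nice and orthonormal, $\nabla_{e_i}e_i=0$ for all $i$, so both sums in formula (2.10) vanish for the diagonal operator $A=\tau_\varphi^2$ (writing $\tau_\varphi^2(e_i)=a_ie_i$ with $a_i$ constant by invariance gives $\nabla_{e_i}(a_ie_i)=a_i\nabla_{e_i}e_i=0$), whence $\Div\tau_\varphi^2=0$ by the first part of Proposition 2.8. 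Only after this is the Structure Theorem applicable, and the rest of your argument goes through.
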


\noindent The Key Lemma is also used to obtain the following statement.

\begin{proposition}
 If $(\varphi, \nabla f, \lambda)$ is a homogeneous closed gradient Laplacian soliton and $\tau_\varphi^2$ is divergence-free, then  $2^{-1}D_X\|\nabla f\|^2 = 3^{-1}(\scal_\varphi - \lambda)g(\nabla f, X)$ for all $X \in TM.$ If in addition $\|\nabla f\| = $ constant, then $\lambda = \scal_\varphi$. Since $\scal_\varphi \leq 0$ for closed $G_2$-structures, the soliton is either shrinking or steady.
\end{proposition}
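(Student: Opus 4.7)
The plan is to evaluate the gradient Laplacian soliton equation along $\nabla f$ and reduce the first identity to showing $q_\varphi(X,\nabla f) = -\tfrac{1}{3}\scal_\varphi\,g_\varphi(\nabla f, X)$, which can then be handled via the Key Lemma. Since $\tfrac{1}{2}\mathcal L_{\nabla f}g_\varphi = \hess f$ for a gradient vector field, the soliton equation~(\ref{eq:1.2}) reads $\hess f = -q_\varphi - \tfrac{1}{3}\lambda\,g_\varphi$. Pairing both sides with $\nabla f$ in the second slot and using the elementary identity $\hess f(X,\nabla f) = \tfrac{1}{2}X\bigl(\|\nabla f\|^2\bigr) = \tfrac{1}{2}D_X\|\nabla f\|^2$ converts the target into the scalar identity displayed above.

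To establish this scalar identity, the Key Lemma together with the hypothesis $\Div \tau_\varphi^2 = 0$ first gives $\ric_\varphi(\nabla f) = 0$. Writing out $q_\varphi$ for closed $G_2$-structures via the Lotay--Wei formula, which expresses it as a specific combination of $\ric_\varphi$, $\tau_\varphi^2$, and $|\tau_\varphi|^2 g_\varphi$, the Ricci term drops out when evaluated on $\nabla f$, and the remaining torsion contributions should collapse using the closed-$G_2$ identity $\scal_\varphi = -\tfrac{1}{2}|\tau_\varphi|^2$ to produce exactly $-\tfrac{1}{3}\scal_\varphi\,g_\varphi(\nabla f, X)$. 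This yields the first part of the proposition.

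The second part follows quickly from the first. If $\|\nabla f\|$ is a constant function, then $D_X\|\nabla f\|^2 = 0$ for all $X$, so the first identity reduces to $(\scal_\varphi - \lambda)\,g_\varphi(\nabla f, X) = 0$ for every $X$. Specialising to $X = \nabla f$ and using that $f$ is non-constant (so $\|\nabla f\|$ is a positive constant) yields $\lambda = \scal_\varphi$. The final remark on the soliton type is immediate: for any closed $G_2$-structure one has $\scal_\varphi \leq 0$, hence $\lambda \leq 0$, which by the sign convention means the soliton is either steady or shrinking.

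The main technical obstacle is the precise matching in the second paragraph: the $\tau_\varphi^2(\nabla f,\cdot)$ contribution in $q_\varphi$ has to combine with the $|\tau_\varphi|^2 g_\varphi(\nabla f,\cdot)$ term in exactly the right proportion after the Ricci piece is killed by the Key Lemma. This step is where the explicit form of $q_\varphi$ from Lotay--Wei is indispensable, and where homogeneity is used implicitly to ensure that the relevant invariant quantities such as $\scal_\varphi$ and $|\tau_\varphi|^2$ behave as constants on $M$.
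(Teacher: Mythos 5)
Your overall architecture matches the paper's: evaluate the soliton equation on $(\nabla f, X)$, use $\hess f(\nabla f, X) = \tfrac{1}{2}D_X\|\nabla f\|^2$, kill the Ricci term with the Key Lemma, and run the constancy argument at the end (which you do correctly, including the sign convention for shrinking/steady). However, there is a genuine gap exactly where you flag the ``main technical obstacle,'' and your proposed mechanism for closing it is not the right one. Writing $q_\varphi = \ric_\varphi - \tfrac{1}{12}\tr(\tau_\varphi^2)I + \tfrac{1}{2}\tau_\varphi^2$ and using $-\tfrac{1}{12}\tr \tau_\varphi^2 = -\tfrac{1}{3}\scal_\varphi$, the soliton equation evaluated on $(\nabla f, X)$ reads
\begin{equation*}
\hess f(\nabla f, X) = -\ric_\varphi(\nabla f, X) - \tfrac{1}{2}\tau_\varphi^2(\nabla f, X) + \tfrac{1}{3}(\scal_\varphi - \lambda)\,g(\nabla f, X).
\end{equation*}
The identity $\scal_\varphi = -\tfrac{1}{2}|\tau_\varphi|^2$ is already fully spent in producing the coefficient $\tfrac{1}{3}\scal_\varphi$ from the trace term; it says nothing about the pointwise tensor term $\tau_\varphi^2(\nabla f, X)$, which is \emph{not} in general proportional to $g(\nabla f, X)$ (see the explicitly non-scalar matrices for $\tau_{\varphi_i}^2$ in Section~3). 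So there is no ``combining in the right proportion'' to be had: the term $\tau_\varphi^2(\nabla f, X)$ must vanish outright, and your argument gives no reason for it to do so.

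The missing ingredient is the identity $\nabla f \lrcorner \tau_\varphi = 0$ for gradient Laplacian solitons on closed $G_2$-structures (Lemma~2.18(4) in the paper, coming from Section~9 of Lotay--Wei). Granting that, one gets $\tau_\varphi^2(\nabla f, X) = g(\tau_\varphi^2(\nabla f), X) = -g(\tau_\varphi(\nabla f), \tau_\varphi(X)) = 0$ by skew-symmetry of $\tau_\varphi$, and the displayed equation collapses to the claimed formula once $\ric_\varphi(\nabla f) = 0$ is in hand. Note also that this vanishing is a consequence of the soliton structure itself, not of homogeneity or of $\Div \tau_\varphi^2 = 0$; those hypotheses are used only for the Key Lemma step. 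With this one citation added, your proof is complete and is essentially the paper's proof.
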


The main observation of this paper is that given a homogeneous closed gradient Laplacian soliton, whether $\tau_\varphi^2$ is divergence-free or not determines the structure of the space via the Structure Theorem. If the possible structures cannot occur (e.g., via a contradiction), then the soliton cannot be gradient. Thus we can use the Structure Theorem to ``eliminate'' homogeneous closed gradient Laplacian solitons, i.e., we can use the Structure Theorem to show some examples of closed Laplacian solitons or closed $G_2$-structures cannot be made gradient. This idea is used in Section~\ref{sec:3} where we ``eliminate'' gradient Laplacian solitons on some nilpotent Lie groups.

To study existence of gradient Laplacian solitons on homogeneous manifolds, we ask whether the homogeneous closed Laplacian solitons that have already been found are gradient. Conti-Fern\'{a}ndez showed in \cite{CF11} that there are twelve isomorphism classes, $\{\mathfrak n_i\}_{i = 1}^{12}$, of $7$-dimensional nilpotent Lie algebras admitting closed $G_2$-structures. Fern\'{a}ndez-Fino-Manero in \cite{FFM16} studied which of these nilpotent Lie algebras admit Ricci soliton associated metrics. Whether these nilpotent Lie algebras admit Laplacian solitons is investigated by Nicolini in \cite{Nic18}. Nicolini found Laplacian solitons $(\mathfrak n_i, \varphi_i)$ on the first seven of the twelve nilpotent Lie algebras. In short, Nicolini exhibited the unique symmetric operator $Q_{\varphi_i} \in \mathfrak {gl}(\mathfrak n_i)$ obtained from the closed $G_2$-structures $\varphi_i$'s she constructed as a characterization of either an algebraic or a semi-algebraic soliton. In Section~\ref{sec:3}, we determine whether the Laplacian solitons found by Nicolini are gradient. We show that only the trivial $7$-dimensional nilpotent Lie group admits a gradient Laplacian soliton and that it must be a Gaussian.

\begin{theorem}
The closed Laplacian solitons $\varphi_i$ on $N_i$ for $i = 2,3,4,5,6,7$ found by Nicolini in \cite{Nic18} are not gradient up to homethetic $G_2$-structures. If $N_1$ does admit a gradient Laplacian soliton, then it must be a Gaussian. 
\end{theorem}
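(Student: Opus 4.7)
The plan is to proceed case by case, starting with $N_1$ and then handling $N_i$ for $i = 2, \dots, 7$ by applying the Corollary of the Structure Theorem. For $N_1$, the Lie algebra $\mathfrak n_1$ is abelian, so the induced metric $g_{\varphi_1}$ is flat and $\tau_{\varphi_1} \equiv 0$. Thus $q_{\varphi_1}$ vanishes identically, and the soliton equation (\ref{eq:1.2}) collapses to $\hess f = -\tfrac{1}{3}\lambda g_{\varphi_1}$ on Euclidean space. A standard ODE argument then forces $f$ to be a quadratic polynomial of the form $-\tfrac{\lambda}{6}|x|^2 + $ (affine), i.e., a Gaussian.

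For each $i \in \{2,\dots,7\}$, Nicolini in \cite{Nic18} provides an explicit basis $(e_j)_{j=1}^7$ of $\mathfrak n_i$ in which the closed $G_2$-structure $\varphi_i$ is written in standard form, along with the symmetric operator $Q_{\varphi_i}$. I would first verify that this basis is orthogonally nice in the sense introduced in Section~\ref{sec:2} (this amounts to checking that the structure constants satisfy the required disjointness conditions). Next, since $Q_{\varphi_i}$ is diagonal in the basis by Nicolini's construction, $\varphi_i$ is Laplacian flow diagonal. Finally, I would compute $\ric_{\varphi_i}$ in this basis directly from the bracket relations and verify it is also diagonal. With these three ingredients, the Corollary applies.

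The conclusion splits into two subcases for each $i$. If $\ker \ric_{\varphi_i}$ is trivial, the Corollary immediately rules out a gradient soliton. If $\ker \ric_{\varphi_i}$ is non-trivial, the Corollary identifies $(N_i, g_{\varphi_i})$ with a product $N_i' \times \mathbb R^{k_i}$ on which $f$ is constant on $N_i'$. In this product case I would invoke the two restricted soliton equations indicated in Remark 1.3, namely $q_{\varphi_i}|_{N_i'} = -\tfrac{1}{3}\lambda g_{\varphi_i}|_{N_i'}$ and $\hess f|_{\mathbb R^{k_i}} + q_{\varphi_i}|_{\mathbb R^{k_i}} = -\tfrac{1}{3}\lambda g_{\varphi_i}|_{\mathbb R^{k_i}}$, and show that the explicit expressions for $q_{\varphi_i}$ coming from Nicolini's data force $\lambda = 0$ on one factor and $\lambda \neq 0$ on the other (or an analogous inconsistency), contradicting the existence of a non-constant $f$. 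The invariance of the conclusion under homothetic rescalings $\varphi \mapsto c\varphi$ follows because rescaling scales all of $\ric$, $q_\varphi$, and $g_\varphi$ homogeneously, so the diagonal/orthogonal-nice hypotheses and the inconsistency of the restricted equations are preserved.

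The main obstacle is the tedious case-by-case bookkeeping: for each of the six algebras one must compute the Ricci operator, identify the (possibly trivial) kernel direction, and in the product case rule out simultaneous solvability of the two restricted equations. The conceptual framework is fully supplied by the Structure Theorem and its Corollary; what remains is carrying out the linear algebra in each of Nicolini's tabulated examples, and, in the examples where Ricci has a flat direction, verifying that $q_{\varphi_i}$ is not a constant multiple of $g_{\varphi_i}$ on the non-flat factor.
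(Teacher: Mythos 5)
Your treatment of $N_1$ matches the paper's, and your overall strategy for $i=2,3$ (divergence-free, product structure, then incompatibility of the restricted equation $q_{\varphi_i}|_{N'} = -\tfrac13\lambda g|_{N'}$) is exactly what the paper does. But there is a genuine gap in the engine you propose for $i=4,\dots,7$: you assert that ``$Q_{\varphi_i}$ is diagonal in the basis by Nicolini's construction, so $\varphi_i$ is Laplacian flow diagonal.'' This is false for $i=4,5,6,7$. In Nicolini's bases the operators $Q_{\varphi_4},Q_{\varphi_5},Q_{\varphi_6},Q_{\varphi_7}$ (and the corresponding $\tau_{\varphi_i}^2$) have nonzero off-diagonal entries, so the hypotheses of the Corollary are simply not satisfied, and the dichotomy you build on it (trivial kernel of $\ric$ versus product case) does not get off the ground for those four algebras.

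The failure is most serious for $i=5,7$: there one computes $\Div\tau_{\varphi_5}^2 = -4g(e_2,\cdot)$ and $\Div\tau_{\varphi_7}^2 = -16\sqrt{6}\,g(e_2,\cdot)$, so $\tau_{\varphi_i}^2$ is \emph{not} divergence-free and the Structure Theorem puts you in the one-dimensional-extension branch, not the product branch --- the conclusion of the Corollary is actually false here, not merely unproved. These cases need a different argument: the Key Lemma $g(\ric(\nabla f),\cdot) = -\tfrac12\Div\tau_\varphi^2(\cdot)$ forces $\nabla f$ to be a specific multiple of $e_2$ (since $\ric$ is diagonal with trivial kernel), and then evaluating the $(1,1)$-soliton equation on $e_2$, using $\hess f(\nabla r)=0$ and $\tau_{\varphi_5}^2(e_2)=0$, produces the numerical contradiction $-2=-4$. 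For $i=4,6$ the conclusion $\Div\tau_{\varphi_i}^2=0$ does hold, but it must be checked by direct computation of $\sum_i\nabla_{e_i}(\tau_{\varphi_i}^2(e_i))$ since $\tau_{\varphi_i}^2$ is non-diagonal; once that is done, the combination of $\ric(\nabla f)=0$ with the trivial kernel of $\ric_{\varphi_i}$ finishes those cases as you anticipate. So your proposal needs to be repaired by (a) dropping the diagonality claim, (b) computing the divergences explicitly, and (c) supplying the one-dimensional-extension argument for $i=5,7$.
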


\noindent We also obtain the following.

\begin{proposition}
The closed $G_2$-structure $\varphi_{12}$ on $N_{12}$ as constructed in \cite{FFM16} is not gradient up to homethetic $G_2$-structures. 
\end{proposition}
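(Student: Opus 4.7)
The plan is to apply the Structure Theorem to $(N_{12}, \varphi_{12})$. Suppose for contradiction that some homothetic rescaling $c\varphi_{12}$ is a gradient Laplacian soliton $(c\varphi_{12}, \nabla f, \lambda)$. Since a homothety rescales the induced metric and the torsion by constant factors, both the divergence-free condition on $\tau^2$ and the Lie-algebraic obstructions below are invariant under such scalings, so we may reduce to the case $c = 1$.

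First I would extract the structure constants of $\mathfrak{n}_{12}$ and the explicit formula for $\varphi_{12}$ from \cite{FFM16}, and compute in an adapted left-invariant orthonormal frame the induced metric $g_{\varphi_{12}}$, the torsion form $\tau_{\varphi_{12}}$, its square $\tau_{\varphi_{12}}^2$, and the divergence $\Div \tau_{\varphi_{12}}^2$, following the methods developed in Section~\ref{sec:3}.

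Before invoking the Structure Theorem, I would rule out the possibility that $f$ is constant: in that case $\nabla f = 0$ and the gradient soliton equation reduces to the eigenvalue condition $\Delta \varphi_{12} = \lambda \varphi_{12}$, which a direct computation from the explicit expression of $\Delta\varphi_{12}$ shows fails. Hence $f$ is non-constant and the Structure Theorem applies. If $\tau_{\varphi_{12}}^2$ is divergence-free, part (1) forces $g_{\varphi_{12}}$ to split isometrically as $g_N \oplus g_{\mathbb R^k}$ with $k \geq 1$; for a simply connected nilpotent Lie group with left-invariant metric, this de Rham splitting translates to an orthogonal direct-sum decomposition of $\mathfrak{n}_{12}$ with a non-trivial abelian ideal as a summand, and I would verify directly from the structure constants that $\mathfrak{n}_{12}$ admits no such decomposition. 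If instead $\tau_{\varphi_{12}}^2$ is not divergence-free, part (2) forces $(N_{12}, g_{\varphi_{12}})$ to be (up to a flat factor) a one-dimensional extension with $f$ affine in the distance parameter $r$; I would check whether $\mathfrak{n}_{12}$ admits an appropriate codimension-one ideal and, if so, substitute the resulting ansatz for $f$ into (\ref{eq:1.2}) to derive a contradiction with the explicit form of $q_{\varphi_{12}}$.

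The main obstacle will be carrying out the tensor computations for $\tau_{\varphi_{12}}^2$ and $\Div \tau_{\varphi_{12}}^2$ with $\mathfrak{n}_{12}$'s structure constants, together with the Lie-algebraic case analysis of the admissible splittings; both are laborious but follow the pattern already established in Section~\ref{sec:3}.
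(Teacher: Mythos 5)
Your skeleton (reduce to $c=1$ via the homothety equivalence, compute $\tau_{\varphi_{12}}^2$ and its divergence, then run the Structure Theorem case analysis) matches the paper's, and the relevant case does turn out to be the divergence-free one: the paper gets $\Div\tau_{\varphi_{12}}^2=0$ almost for free, since \cite{FFM16} shows $(\mathfrak n_{12},\varphi_{12})$ is Laplacian flow diagonal, so $Q_{\varphi_{12}}$ and hence $\tau_{\varphi_{12}}^2$ are diagonal with respect to the orthogonally nice orthonormal basis, and Proposition 2.8 applies. Where you genuinely diverge is in how the product case is killed. You propose to show the splitting $N\times\mathbb R^k$ cannot occur at all, by arguing that the isometric splitting forces an orthogonal Lie-algebra decomposition of $\mathfrak n_{12}$ with an abelian ideal summand and that no such decomposition exists. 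The paper instead \emph{accepts} the splitting, observes $\nabla f\in T_p\mathbb R^k\subset\ker\ric_{\varphi_{12}}=\Span\{e_4,e_5,e_6\}$ so that $e_1,e_3\in T_pN$, and then reads off a contradiction from the restricted soliton equation $q_{\varphi_{12}}\big|_N=-\tfrac13\lambda_{12}\,g\big|_N$: the explicit computation gives $Q_{\varphi_{12}}=\tfrac1{24}\Diag(-1,-1,-4,2,-1,-1,1)$, so the $(e_1,e_1)$ and $(e_3,e_3)$ entries would have to agree but equal $-1$ and $-4$. This is the same mechanism used for $\mathfrak n_2$ and $\mathfrak n_3$ in Section~\ref{sec:3}, and it avoids any de Rham-to-Lie-algebra translation.

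The one step in your route that is not airtight as written is precisely that translation: the Structure Theorem produces an \emph{isometric} splitting of $(N_{12},g_{\varphi_{12}})$, and passing from that to an orthogonal direct-sum decomposition of $\mathfrak n_{12}$ requires a nontrivial input (a Wilson-type result that de Rham factors of a simply connected nilpotent Lie group with left-invariant metric are again algebraic). Granting that, your Lie-algebraic obstruction does hold: the center of $\mathfrak n_{12}$ is $\Span\{e_7\}$ and $e_7^\perp$ is not a subalgebra (e.g.\ $[e_1,e_5]=\tfrac14 e_7$), so no orthogonal abelian summand exists. But you should either cite the splitting result or switch to the paper's more elementary contradiction, which needs only the diagonal entries of $Q_{\varphi_{12}}$ and the containment $T_p\mathbb R^k\subset\ker\ric_{\varphi_{12}}$.
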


In Section~\ref{sec:4}, we use the Structure Theorem to obtain the structure of almost abelian solvmanifolds admitting closed gradient Laplacian solitons. We note that Lauret exhibits non-gradient examples in this case (see \cite{Lau17a}).

\begin{theorem}
If $(\varphi, \nabla f, \lambda)$ is a closed non-torsion-free gradient Laplacian soliton on Lie group $G_D$ with Lie algebra $\mathfrak g = \mathfrak h \oplus_D \mathbb Re_7$ and $\mathfrak h$ is a codimension-one abelian ideal, then it must be a product $N \times \mathbb R^k$ and $f$ is constant on $N$.
\end{theorem}

\section{Preliminaries}\label{sec:2}

\subsection{Background} For a $7$-dimensional real vector space $\mathfrak p$, a \textit{fixed positive $3$-form} $\varphi_0 \in \Lambda^3\mathfrak p^*$ is given by $$\varphi_0 = e^{127} + e^{347} + e^{567} + e^{135} - e^{146} - e^{236} - e^{245},$$ where $(e^i)_i$ is the basis for $\mathfrak p^*$ dual to basis $(e_i)_i$ for $\mathfrak p$. A $3$-form $\psi \in \Lambda^3 \mathfrak p^*$ is \textit{positive} if there is an $h \in \GL(\mathfrak p)$ such that $h \cdot \varphi_0 = \psi$, i.e., $\psi$ is positive if it is in the $\GL(\mathfrak p)$-orbit of $\varphi_0$. Any positive $3$-form $\psi$ induces a unique inner product and orientation via $\left<X, Y\right>_{\psi}\vol_\psi = 6^{-1}\iota_X(\psi) \wedge \iota_Y(\psi) \wedge \psi$, which together determines a unique Hodge star operator.

Given a smooth $7$-manifold $M$, by identifying $T_pM$ with $\mathfrak p \cong \mathbb R^7$, we say that $\varphi$ is a closed $G_2$-structure if $\varphi_p$ is positive for each $p \in M$ and $d\varphi = 0$. As noted in the introduction, $\varphi$ induces a Riemannian metric $g_\varphi$ and an orientation via $\vol_\varphi$, which together induces a Hodge star operator. It is a well known fact that for closed $G_2$-structures $(M, \varphi)$, $\scal_\varphi \leq 0$ and is equal to $0$ if and only if $\varphi$ is torsion-free. Also, recall that for closed $G_2$-structures, the torsion $2$-form $\tau_\varphi$ satisfies \begin{equation}\label{eq:2.1}
 \tau_{\varphi} = - *_\varphi d *_\varphi \varphi \, \, \, \, \, \text{and} \, \, \, \, \, \Delta_{\varphi}\varphi = d\tau_{\varphi} = - d *_\varphi d *_\varphi \varphi.
\end{equation} Observe that $\tau_\varphi$ is determined by the closed $G_2$-structure $\varphi$ along with the structure equations as it involves the exterior derivative $d$. Both of these are in turn obtained with respect to an orthonormal basis $(e_i)_i$ for $T_pM$ and its dual basis $(e^i)_i$ for $T_p^*M$.

When $(M, \varphi)$ is homogeneous, $M$ has a presentation $M = G/K$ for some Lie subgroup $G \subset \Aut(M, \varphi) \subset \Iso(M, \varphi)$ and isotropy subgroup $K \subset G$. In this setting, we have the following.
\begin{enumerate}[(i)]
    \item $\varphi$ is a $G$-invariant $G_2$-structure on $M$.
    \item When $\mathfrak g = \mathfrak k \oplus \mathfrak p$ is a \textit{reductive deomposition}, i.e., $\Ad(K)\mathfrak p \subset \mathfrak p$, we can identify $\mathfrak p$ with $T_pM$ such that any $G$-invariant $G_2$-structure $\varphi$ is determined by a fixed positive $3$-form on $\mathfrak p$.
\end{enumerate}

\begin{remark} There is a one-to-one correspondence between left-invariant $G_2$-structures on simply-connected Lie groups and $G_2$-structures on its associated Lie algebra (see, e.g., \cite{Fre13} or \cite{Lau17a}). Thus it is common in the literature to identify Lie groups $G = G_\mu$ admitting a (closed) left-invariant $G_2$-structure (similarly, Laplacian, algebraic, or semi-algebraic soliton) $\varphi$ with its corresponding Lie algebra $(\mathfrak g, \mu = [\cdot, \cdot])$ admitting the fixed positive $3$-form $\varphi$. We refer to both $(G, \varphi)$ and $(\mathfrak g, \varphi)$ as $G_2$-structures and further, as Laplacian solitons if $\varphi$ satisfies (\ref{eq:1.1})  for some $\lambda \in \mathbb R$.\end{remark}

In the notation of \cite{LW17}, the associated metric $g_\varphi$ for a closed Laplacian soliton $\varphi$ satisfies \begin{equation}\label{eq:2.2}\begin{cases}\partial_t g_{\varphi}(t) = 2h(t) \\ g_\varphi(0) = g_\varphi\end{cases},\end{equation} where the $2$-form $h$ in local coordinates is $$h_{ij} = -R_{ij} - \dfrac{1}{3}|T|^2g_{ij} - 2T_i^kT_{kj};$$ $R_{ij}$ is the Ricci tensor; $T$ is the full torsion tensor for closed $G_2$-structures. Note that the flow (\ref{eq:2.2}) is a perturbation of the Ricci flow by ``quadratic'' torsion terms. Lotay-Wei uses (\ref{eq:1.1}) and injectivity of the linear map $i_\varphi:S^2(T^*M) \rightarrow \Lambda^3(T^*M)$ to show that the associated metric must also satisfy $$-R_{ij} - \dfrac{1}{3}|T|^2g_{ij} - 2T_i^kT_{kj} = \dfrac{1}{3}\lambda g_{ij} + \dfrac{1}{2}(\mathcal  L_X g)_{ij},$$ or equivalently \begin{equation}\label{eq:2.3} h - \dfrac{1}{3}\lambda g -\dfrac{1}{2}\mathcal L_X g = 0.\end{equation} We refer the reader to [Proposition 9.4, \cite{LW17}] or [Corollary 3.2, \cite{Kar09}] for more details.

We are primarily interested in (\ref{eq:2.3}) cast in the notation of Lauret's papers \cites{Lau17a, Lau17b}. For any closed $G_2$-structure $\varphi$, [Proposition 2.2, \cite{Lau17a}] states that there is a unique symmetric operator \begin{equation}\label{eq:2.4} Q_\varphi = \ric_{\varphi} - \dfrac{1}{12}\tr(\tau_{\varphi}^2)I + \dfrac{1}{2}\tau_{\varphi}^2,\end{equation} where  $\ric_{\varphi}$ is the Ricci operator of $(M, g_{\varphi})$ and $\tau_{\varphi} \in \mathfrak{so}(TM)$ is the skew-symmetric operator corresponding to the torsion $2$-form $\tau_{\varphi} = g_{\varphi}(\tau_{\varphi}\cdot, \cdot)$ for closed $G_2$-structures. (Note: $T$ from [Proposition 9.4, \cite{LW17}] is equal to $-\frac{1}{2}\tau_\varphi$.) The operator $Q_\varphi \in \text{sym}(TM)$ satisfies $\theta(Q_\varphi)\varphi = \Delta_{\varphi}\varphi$ where $\theta: \mathfrak{gl}(\mathfrak p) \rightarrow \text{End}(\Lambda^3{\mathfrak p}^*)$ is the representation given by $$\theta(A)\psi = - \psi(A\cdot, \cdot, \cdot) - \psi(\cdot, A\cdot, \cdot) - \psi(\cdot, \cdot, A\cdot)\, \, \, \, \, \, \, \, \, \, A \in \mathfrak {gl}(\mathfrak p), \, \, \psi \in \Lambda^3\mathfrak p^*$$ obtained as the derivative of the natural left $\GL(\mathfrak p)$-action on $3$-forms: $h \cdot \psi = \psi(h^{-1}\cdot, h^{-1}\cdot, h^{-1}\cdot)$ (see \cite{Lau17a} for more details). It turns out $Q_\varphi$ coincides with $-h$ in (\ref{eq:2.2}). Setting $q_\varphi:= g_\varphi(Q_\varphi \cdot,\cdot)$, one obtains the Laplacian soliton equation (\ref{eq:1.2}) from equation (\ref{eq:2.3}) by similar arguments made in [LW]. Note that (\ref{eq:1.2}) corresponds to a \textit{geometric $q$-flow} of the metric $g_\varphi$ with $c = -(1/3)\lambda$ and \textit{$q$-soliton} $-2q_\varphi$. When $X = \nabla f$ for some smooth function $f:M \rightarrow \mathbb R$, equation (\ref{eq:1.2}) is the \textit{gradient Laplacian soliton equation} \begin{equation}\label{eq:2.5} \hess f = - q_\varphi - \dfrac{1}{3}\lambda g_\varphi\end{equation} with corresponding closed gradient Laplacian soliton $(\varphi, \nabla f, \lambda)$. The function $f$ is commonly referred to as the \textit{potential function}.

For more foundational material on $G_2$-structures and the Laplacian flow, we refer the reader to \cites{Bry06, Kar09, Kar20} and \cite{FG82}. We note that Haskins-Nordstr\"om investigate cohomogeneity-one steady gradient Laplacian solitons with symmetry groups $\Sp(2)$ and $\SU(3)$ in \cite{HN21}. Examples of gradient Laplacian solitons in the non-homogeneous setting are referenced in \cite{HN21}. A recent preprint \cite{HKP22} by Haskins-Kahn-Payne shows uniqueness of asymptotically conical gradient Laplacian solitons. We also mention that Garrone in \cite{Gar22} studies closed $G_2$-structures in the setting of isometric flow, where the critical points are $G_2$-structures with divergence-free full torsion tensor.

\subsection{The Structure Theorem}

If a $G_2$-structure $(M, \varphi)$ admits a gradient Laplacian soliton $(\varphi, \nabla f, \lambda)$, then the triple satisfies the gradient Laplacian soliton equation (\ref{eq:2.5}). To be consistent with notation of \cite{PW22}, we set  \begin{equation}\label{eq:2.6} q: = -q_\varphi - \dfrac{1}{3}\lambda g_\varphi,\end{equation} where $q$ is a $G$-invariant symmetric $2$-tensor. Observe that the gradient Laplacian soliton equation being satisfied by $(\varphi,\nabla f, \lambda)$ where $\nabla f \ne 0$ is equivalent to there being a non-constant $f \in F(q) = \{\hess f = q\}$ as studied in \cite{PW22}. Petersen-Wylie's motivation for studying the solution space $F(q)$ is due to the equation $\hess f = q$ arising naturally from gradient solitons for geometric flows where the tensor $q$ involves the curvature of the manifold. We remark that if $f$ is constant, then $X = \nabla f = 0$ and so the gradient solitons would be of the form $(\varphi, 0, \lambda)$, i.e., they are \textit{trivial} gradient solitons and correspond to \textit{eigenforms} satisfying $\Delta_\varphi \varphi = \lambda \varphi$. We call gradient solitons where $\nabla f \ne 0$ \textit{non-trivial} solitons. We consider non-trivial gradient solitons in this paper.

To obtain the structure theorem for closed gradient Laplacian solitons on homogeneous spaces, we need the following theorem of Petersen-Wylie ([Theorem 3.6, \cite{PW22}]).

\begin{theorem}[Structure Theorem of Petersen-Wylie] Let $(M, g)$ be a $G$-homogeneous manifold and let $q$ be a $G$-invariant symmetric $2$-tensor. If $f \in F(q)$ is a non-constant function, then either
\begin{enumerate}
    \item $(M, g)$ is isometric to a product $N \times \mathbb R^k$ where $f$ is constant on $N$
    
    \item $(M, g)$ is a $1$-dimensional extension, $g = dr^2 + g_r$, and $f(x, y) = ar + b$,
    
    \item $(M, g)$ is isometric to a product $N \times \mathbb R^k$ where $N$ is a $1$-dimensional extension and $f(x, y) = ar(x) + v(y)$, where $v$ is a function on $\mathbb R^k$ and $r$ is a distance function on $N$. 
\end{enumerate}
\end{theorem}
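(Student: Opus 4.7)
The plan is to leverage the $G$-invariance of $q$ together with the Killing fields coming from $\mathfrak g$ to produce affine and ultimately parallel vector fields on $M$, and then to apply the de~Rham decomposition theorem. The starting observation is the identity $\mathcal L_K(\hess f) = \hess(Kf)$, valid for any Killing field $K$ because $\mathcal L_K$ commutes with the Levi-Civita connection when $K$ is Killing. Combined with $\mathcal L_K q = 0$, which follows from $G$-invariance of $q$, applying $\mathcal L_K$ to both sides of $\hess f = q$ yields $\hess(Kf) = 0$ for every $K$ in the Lie algebra $\mathfrak g$ of $G$. In other words, $Kf$ is an affine function on $M$ for every infinitesimal isometry $K \in \mathfrak g$.

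Next, I would split off flat de~Rham factors using these affine functions. Whenever $Kf$ is non-constant, the identity $\hess(Kf) = 0$ means $\nabla(Kf)$ is a non-zero parallel vector field, and by the de~Rham decomposition theorem $M$ splits isometrically as $M' \times \mathbb R$ with $\nabla(Kf)$ tangent to the $\mathbb R$-factor. Iterating, I would extract a maximal flat factor $\mathbb R^k$ so that $M \cong N \times \mathbb R^k$ isometrically, with $f$ decomposing as $f(x,y) = f_N(x) + v(y)$ where $v$ is a function on $\mathbb R^k$. By maximality, on the complementary factor $N$ the restricted function $f_N$ satisfies $Kf_N = \text{constant}$ for every Killing field $K$ in the Lie algebra $\mathfrak g_N$ of the transitive subgroup $G_N \subset G$ acting on $N$, which is equivalent to $\nabla f_N$ being $G_N$-invariant.

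To finish, I would analyze $N$ itself. If $f_N$ is constant we are in case~(1). Otherwise, since $\nabla f_N$ is a non-zero $G_N$-invariant vector field on the $G_N$-homogeneous manifold $N$, its norm is a positive constant $c$, so that $r := f_N/c$ satisfies $\|\nabla r\| \equiv 1$ and qualifies as a distance function. Its level sets are then smooth homogeneous hypersurfaces, and the unit-speed integral curves of $\nabla r$ foliate $N$ into a one-parameter family $\{N_r\}$, yielding the warped-form expression $g_N = dr^2 + g_r$; this places us in case~(2) when $k=0$ and case~(3) when $k>0$. The main obstacle to making this outline rigorous is the iteration step: one must verify that the transitive $G$-action respects the de~Rham splitting, so that the equation $\hess f = q$ restricts cleanly to the complementary factor with an invariant structure. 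This compatibility does hold because isometries permute de~Rham factors but cannot exchange flat with non-flat ones, so $G$ preserves the flat-versus-non-flat decomposition; checking that the projected subgroup on $N$ remains transitive and that $q|_N$ remains $G_N$-invariant is a bookkeeping exercise once the splitting is fixed.
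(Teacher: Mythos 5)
This statement is not proved in the paper at all: it is quoted verbatim from Petersen--Wylie as [Theorem 3.6, \cite{PW22}], and the paper only verifies the hypotheses (that $q = -q_\varphi - \tfrac{1}{3}\lambda g_\varphi$ is a $G$-invariant symmetric $2$-tensor) before invoking it. So there is no in-paper proof to compare against; what you have written is a sketch of the Petersen--Wylie argument itself. Your strategy is in fact the infinitesimal version of theirs: where you differentiate $\hess f = q$ along a Killing field $K$ to get $\hess(Kf) = 0$, they observe that $f\circ\gamma \in F(q)$ for each $\gamma \in G$, so that $f\circ\gamma - f$ is an affine function, and then analyze the resulting cocycle $\gamma \mapsto f\circ\gamma - f$ with values in the (finite-dimensional) space of affine functions. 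Your first step, and the use of the Euclidean de Rham factor generated by parallel gradients of affine functions, are sound and match the spirit of their proof.

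The genuine gap is the assertion, made in passing during the iteration step, that after extracting the maximal flat factor one has $f(x,y) = f_N(x) + v(y)$. This additive splitting is equivalent to the vanishing of the mixed components $q(X,Y) = \hess f(X,Y)$ for $X$ tangent to $N$ and $Y$ tangent to $\mathbb R^k$, and it is a substantive part of the theorem's conclusion (it is exactly what gives cases (1) and (3) their stated form), not an automatic consequence of having split off a flat factor. You cannot get it by applying $\mathcal L_Y q = 0$ to the parallel fields $Y = \nabla(Kf)$: these are Killing, but their flows are translations of the Euclidean factor that need not lie in $G$, so the $G$-invariance of $q$ says nothing about them. Petersen--Wylie obtain the splitting of $f$ precisely from the cocycle structure of $\gamma \mapsto f\circ\gamma - f$ (separating its linear and constant parts under the action of $G$ on affine functions), and any complete write-up of your outline would have to reproduce that analysis rather than fold it into ``bookkeeping.'' Two smaller omissions: the global splitting along a non-constant affine function on a complete, not necessarily simply connected, homogeneous space needs a citation (de Rham alone assumes simple connectivity), and identifying $N$ as a one-dimensional extension in the group-theoretic sense of Section 4 requires exhibiting the surjective homomorphism $G \to (\mathbb R,+)$, $\gamma \mapsto f_N\circ\gamma - f_N$, with kernel containing the isotropy --- this does follow from your invariance of $\nabla f_N$, but it is part of the statement and should be said.
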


\noindent The three possible structures depend on the divergence of the $G$-invariant symmetric $2$-tensor $q$. That is, if $q$ is divergence-free, then we are in case (1) ([Theorem 1.1, \cite{PW22}]). The converse also holds: if the product $N \times \mathbb R^k$ has a non-constant function $f \in F(q)$ that is constant on $N$, then $q$ is divergence-free. To see this, we use the Bochner formula $\Div(\nabla \nabla f) = \ric(\nabla f) + \nabla \Delta f$ from [Lemma 2.1, \cite{PW09}]. Since $f$ is a function on the Euclidean factor only, $\nabla f \in T_p\mathbb R^k$. It follows that $\nabla f \in \ker\ric$, hence $\ric(\nabla f) = 0$. Moreover, $\Delta f = \tr \nabla \nabla f = \tr\hess f = \tr q $ is constant as $M$ is homogeneous and $q$ is invariant. Thus $\Div q = \Div \hess f = \Div(\nabla \nabla f) = 0$. If $q$ is not divergence-free, then the structure of $M$ can be as in either case (2) or (3). So to apply Theorem 2.1, we must compute the divergence of $q = - q_{\varphi} - \frac{1}{3}\lambda g_\varphi$.

\begin{proof}[Proof of Theorem 1.1]
Let $(M, \varphi)$ be a closed homogeneous $G_2$-structure.  By [Proposition 2.2, \cite{Lau17a}], $$\scal_\varphi = - \dfrac{1}{2}|\tau_\varphi|^2 \, \, \, \, \, \text{and} \, \, \, \, \, |\tau_\varphi|^2 = -\dfrac{1}{2}\tr \tau_\varphi^2.$$ Putting these two equations together yields $$-\dfrac{1}{3}\scal_\varphi = -\dfrac{1}{12}\tr \tau_\varphi^2.$$ The expression for  $Q_\varphi$ from (\ref{eq:2.4}) can be written \begin{equation}\label{eq:2.7} Q_\varphi = \ric_\varphi -  \dfrac{1}{3} \scal_\varphi I + \dfrac{1}{2}\tau_\varphi^2.\end{equation} Taking the divergence gives \begin{equation}\label{eq:2.8}
    \Div Q_\varphi  =  \dfrac{1}{2}\Div\tau_\varphi^2,
\end{equation} where we used the 2nd contracted Bianchi identity, $\Div \ric_\varphi = \frac{1}{2}D\scal_\varphi$, along with the fact that $\scal_\varphi$ is constant on homogeneous spaces. Performing a type change of equation (\ref{eq:2.6}) to $(1,1)$-tensors with respect to $g_\varphi$, we get $Q = -Q_\varphi - \frac{1}{3}\lambda I$. Then $\Div Q = - \frac{1}{2}\Div\tau_{\varphi}^2$, or equivalently, \begin{equation}\label{eq:2.9} \Div q = - \dfrac{1}{2}\Div \tau_{\varphi}^2, \end{equation} where $\tau_{\varphi}^2 = g_\varphi(\tau_{\varphi}^2 \cdot, \cdot)$. So to compute $\Div q$, it suffices to compute $\Div \tau_{\varphi}^2$.

Suppose $(\varphi,\nabla f, \lambda)$ is a gradient Laplacian soliton where $\nabla f \ne 0$. Then equation (\ref{eq:2.5}) is satisfied and implies that there is a non-constant $f \in \{\hess f = q\}$. It is clear that $q$ is a symmetric $2$-tensor as both $q_\varphi$ and $(1/3)\lambda g_\varphi$ are symmetric. That $q$ is $G$-invariant follows from $\varphi$ being $G$-invariant. More precisely, for $\gamma \in G$, we have $\gamma^*\varphi = \varphi$ and $\gamma^*g_{\varphi} = g_\varphi$. It follows from isometry invariance of the Ricci tensor that $\gamma^*\ric(g_\varphi) = \ric(\gamma^*g_\varphi) = \ric(g_\varphi)$ as $\gamma \in G \subset \Aut(M, \varphi) \subset \Iso(M, g_\varphi)$. Moreover, since the torsion $2$-form $\tau_\varphi$ is determined by $\varphi$, $\gamma^*\tau_{\varphi} = \tau_{\gamma^*\varphi} = \tau_{\varphi}$. Thus $q$ is $G$-invariant. Combining these observations with the preceding discussion, we apply Theorem 2.2 to obtain the possible structures determined by whether $\tau_\varphi^2$ is divergence-free or not. 
\end{proof}

\subsection{On computing $\Div\tau_\varphi^2$}

The following are several useful lemmas for computing $\Div \tau_\varphi^2$. We first state a lemma regarding the divergence of general $2$-tensors.

\begin{lemma}
Let $(M, g)$ be a Riemannian manifold and $(e_i)_i$ an orthonormal basis on $T_pM$. For any $(0, 2)$-tensor of the form $T(\cdot, \cdot) = g(A \cdot, \cdot)$, where $A$ is its $(1,1)$-dual tensor with respect to $g$, we have \begin{equation}\label{eq:2.10}(\Div T)(U) = \sum_{i = 1}^7 g(\nabla_{e_i}(A(e_i)) - A(\nabla_{e_i}e_i), U).\end{equation}
\end{lemma}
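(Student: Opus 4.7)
The plan is to reduce the claim to the standard coordinate-free definition of the divergence of a $(0,2)$-tensor together with metric compatibility. Recall that for any $(0,2)$-tensor $T$, the divergence is defined by
\begin{equation*}
(\Div T)(U) = \sum_{i=1}^{7} (\nabla_{e_i} T)(e_i, U),
\end{equation*}
where $(e_i)_i$ is an orthonormal frame at the point in question. The tensorial derivative $\nabla T$ is computed via the usual Leibniz rule, so
\begin{equation*}
(\nabla_{e_i} T)(e_i, U) = e_i\bigl(T(e_i, U)\bigr) - T(\nabla_{e_i} e_i, U) - T(e_i, \nabla_{e_i} U).
\end{equation*}

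Next I would substitute $T(X, Y) = g(AX, Y)$ into every term on the right. By the assumption that $\nabla$ is the Levi-Civita connection, $\nabla g = 0$, so
\begin{equation*}
e_i\bigl(g(Ae_i, U)\bigr) = g\bigl(\nabla_{e_i}(Ae_i), U\bigr) + g\bigl(Ae_i, \nabla_{e_i} U\bigr).
\end{equation*}
The other two terms become $g(A(\nabla_{e_i}e_i), U)$ and $g(Ae_i, \nabla_{e_i}U)$. Combining these, the term $g(Ae_i, \nabla_{e_i}U)$ cancels, leaving
\begin{equation*}
(\nabla_{e_i} T)(e_i, U) = g\bigl(\nabla_{e_i}(Ae_i) - A(\nabla_{e_i} e_i), U\bigr).
\end{equation*}

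Summing this identity over $i = 1, \dots, 7$ produces exactly the formula \eqref{eq:2.10}. There is no genuine obstacle here; the argument is essentially the unpacking of the definition of $\nabla T$ together with the fact that the Levi-Civita connection is metric-compatible, and the key observation is simply that the $\nabla_{e_i} U$ contribution from differentiating $g(Ae_i, U)$ is canceled by the last term in the Leibniz expansion of $(\nabla_{e_i} T)(e_i, U)$. One only needs to be a bit careful that the formula is pointwise, so choosing $(e_i)_i$ to be any orthonormal basis of $T_p M$ (not necessarily a local frame) suffices, since the right-hand side of \eqref{eq:2.10} makes sense at a point.
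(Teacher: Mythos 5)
Your proof is correct. The paper actually states this lemma without proof, treating it as a standard fact, and your argument — expanding $(\nabla_{e_i}T)(e_i,U)$ by the Leibniz rule, using metric compatibility on $e_i\bigl(g(Ae_i,U)\bigr)$, and observing that the two $g(Ae_i,\nabla_{e_i}U)$ terms cancel — is exactly the computation the paper implicitly relies on. One small refinement to your closing remark: to write down $\nabla_{e_i}(Ae_i)$ and $A(\nabla_{e_i}e_i)$ individually you do need to extend $(e_i)_i$ to a local frame near $p$ (in the paper's applications these are global left-invariant frames), but since their difference reproduces the tensorial quantity $\sum_i(\nabla_{e_i}T)(e_i,U)$, the right-hand side of \eqref{eq:2.10} is independent of the choice of extension, so the formula is indeed well defined pointwise.
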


\noindent We will refer to the sums $\sum_i  g(\nabla_{e_i}(A(e_i)), \cdot)$ and $\sum_i g(A(\nabla_{e_i}e_i), \cdot)$ from formula (\ref{eq:2.10}) as (\ref{eq:2.10}a) and (\ref{eq:2.10}b), respectively.

\begin{remark} Note $\Div q$ is a $(0, 1)$-tensor. When $A$ is symmetric (hence $T$ is symmetric) it is not hard to show $(\nabla_{e_i}T)(e_i, U) = (\nabla_{e_i}T)(U, e_i)$ for any vector $U$. 
\end{remark}

We make a definition that will be useful in computations.

\begin{definition}
We say that a basis $(e_i)_i$ for $\mathfrak g$ is \textit{orthogonally nice} if $$[e_i, e_j] = ce_k \, \, \, \, \, \& \, \, e_i, e_j \perp e_k.$$ If $(e_i)_i$ is an orthonormal basis, then this condition is equivalent to  $$[e_i, e_j] = ce_k \, \, \, \, \, \& \, \, e_i, e_j \ne e_k.$$ 
The motivation for defining such a basis is due to it being a sufficient condition for \textit{diagonally trivial derivatives}, i.e., $$\nabla_{e_i}e_i = 0 \, \, \, \, \, \forall \, \, i,$$ provided $(e_i)_i$ is orthonormal.
\end{definition}

\begin{remark}
Our definition of an ``orthogonally nice'' basis differs from the notion of a ``nice'' basis as defined by Lauret-Will in \cite{LW13}: a basis of a Lie algebra is \textit{nice} if $[e_i, e_j]$ is always a scalar multiple of some element in the basis and $[e_i, e_j]$, $[e_r, e_s]$ can be a nonzero multiple of the same $e_k$ only if $\{i, j\} \cap \{r, s\} = \emptyset$. All of the bases $(e_i)_i$ for $(\mathfrak n_i, \varphi_i)$ for $i = 1, ..., 7$ are nice. Moreover, they are orthogonally nice, hence the structure equations for $\mathfrak n_i$ yields diagonally trivial derivatives. The characterization of nice bases is used by Lauret-Will as well as others referenced in \cite{LW13} to study nilsolitons on nilmanifolds and stably Ricci-diagonal metrics. A basis for a Lie algebra is \textit{stably Ricci-diagonal} if any diagonal left-invariant metric has diagonal Ricci tensor (see \cite{LW13}, \cite{Kri21}). One relevant fact in the nilpotent case is the following: a basis of a nilpotent Lie algebra is stably Ricci-diagonal if and only if it is nice ([Theorem 1.1, \cite{LW13}]). We note that some of the results to follow may hold with the hypotheses of nice bases on nilpotent Lie groups. We also note that Krushnan studies nice bases and diagonality of the Ricci tensor in a more general setting in \cite{Kri21}.
\end{remark}

\begin{lemma}[Consequences of the Koszul Formula]
For any orthonormal basis $(e_i)_i$, \begin{enumerate}
    \item $g(\nabla_{e_i}e_i, e_j) = -g([e_i, e_j], e_i) =  g([e_j,  e_i], e_i)$
    
    \item  $g(\nabla_{e_i}e_j, e_k) = \dfrac{1}{2}[g([e_i, e_j], e_k) - g([e_i, e_k],e_j) -  g([e_j, e_k], e_i)]$
    
    \item $\sum_i g(\nabla_{e_i}e_i, e_j) = \tr(\ad_{e_j})$
    
    \item If $(e_i)_i$ is orthogonally nice, then $\nabla_{e_i}e_i = 0 \, \, \, \, \, \forall i.$
\end{enumerate} 
\end{lemma}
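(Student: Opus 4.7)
The plan is to deduce all four statements from the Koszul formula
\[
2g(\nabla_X Y, Z) = X g(Y,Z) + Y g(Z,X) - Z g(X,Y) + g([X,Y],Z) - g([Y,Z],X) + g([Z,X],Y),
\]
specialized to left-invariant vector fields $X,Y,Z$ on the Lie group equipped with a left-invariant metric $g$. The key simplification is that if $Y,Z$ are left-invariant, then $g(Y,Z)$ is a constant function on $G$, so the first three directional-derivative terms on the right-hand side vanish. This reduces the Koszul formula to
\[
2g(\nabla_X Y,Z) = g([X,Y],Z) - g([Y,Z],X) + g([Z,X],Y),
\]
which, applied to $X=e_i$, $Y=e_j$, $Z=e_k$, is exactly statement (2).

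For (1), I would set $X=Y=e_i$ and $Z=e_j$ in the reduced Koszul formula. The bracket $[e_i,e_i]=0$ kills the first term on the right, and the remaining two terms combine via $[e_j,e_i]=-[e_i,e_j]$ to give $2g(\nabla_{e_i}e_i,e_j) = -2g([e_i,e_j],e_i)$, which yields both equalities in (1). Statement (3) is then immediate by summing (1) over $i$ and recognizing
\[
\sum_i g(\nabla_{e_i}e_i, e_j) = \sum_i g([e_j,e_i], e_i) = \sum_i g(\ad_{e_j}(e_i), e_i) = \tr(\ad_{e_j}),
\]
where the last equality uses that $(e_i)_i$ is orthonormal.

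For (4), I would apply (1): under the orthogonally nice hypothesis, $[e_i,e_j]=ce_k$ with $e_k\perp e_i$ (where the case $i=j$ is trivial since $[e_i,e_i]=0$). Orthonormality forces $g([e_i,e_j],e_i)=c\,g(e_k,e_i)=0$, so $g(\nabla_{e_i}e_i,e_j)=0$ for every $j$; varying $j$ through the basis gives $\nabla_{e_i}e_i=0$.

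The proof is essentially book-keeping from Koszul, so there is no serious obstacle; the only mild subtlety is being explicit that left-invariance kills the directional-derivative terms (so the formula depends only on the structure constants), and noting in (4) that the orthogonally nice condition is precisely what is needed to make the single relevant inner product $g([e_i,e_j],e_i)$ vanish for every $j$.
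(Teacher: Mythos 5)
Your proof is correct and follows essentially the same route as the paper, which simply cites the Koszul formula, orthonormality, and skew-symmetry of the bracket for (1)--(2) and then derives (3) and (4) from (1); you have merely filled in the computations. Your explicit remark that left-invariance of the metric kills the directional-derivative terms in the Koszul formula is a worthwhile clarification that the paper leaves implicit.
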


\begin{proof}
(1) and (2) follow from the Koszul formula, $(e_i)_i$ being an orthonormal basis, and skew-symmetry of the Lie bracket. (3) follows from (1), $\ad_{e_j}(e_i) = [e_j, e_i]$, and the definition of trace. (4) follows from (1) and $(e_i)_i$ being orthogonally nice. 
\end{proof}

\begin{proposition}
Let $(\mathfrak g, [\cdot, \cdot])$ be the Lie algebra of a Lie group $G$ with closed $G_2$-structure $\varphi$.
If $\tau_{\varphi}^2$ is diagonal with respect to an orthogonally nice orthonormal basis $(e_i)_i$, then $\tau_\varphi^2$ is divergence-free, hence $Q_\varphi$ is divergence-free. Moreover, if $\ric_\varphi$ is also diagonal with respect to $(e_i)_i$, then $Q_\varphi$ is diagonal if and only if $\tau_\varphi^2$ is.
\end{proposition}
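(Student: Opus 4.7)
The plan is to apply Lemma 2.3 (formula (\ref{eq:2.10})) directly to $T = \tau_\varphi^2$, with dual $(1,1)$-tensor $A = \tau_\varphi^2 \in \End(TM)$, and exploit the orthogonally nice hypothesis to kill both terms on the right-hand side.

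First I would note that since we are on a Lie group and $\varphi$ is left-invariant, the tensor $\tau_\varphi^2$ is left-invariant; hence in the left-invariant frame $(e_i)_i$ the diagonal hypothesis gives $\tau_\varphi^2(e_i) = \lambda_i e_i$ with constant eigenvalues $\lambda_i \in \mathbb R$. Next, by Lemma 2.6(4), the orthogonally nice condition yields $\nabla_{e_i} e_i = 0$ for every $i$. This immediately makes the second sum in (\ref{eq:2.10}) vanish:
\begin{equation*}
\sum_i g_\varphi\bigl(\tau_\varphi^2(\nabla_{e_i} e_i),\, U\bigr) = 0.
\end{equation*}
For the first sum, constancy of $\lambda_i$ together with $\nabla_{e_i} e_i = 0$ gives
\begin{equation*}
\nabla_{e_i}\bigl(\tau_\varphi^2(e_i)\bigr) = \nabla_{e_i}(\lambda_i e_i) = \lambda_i \nabla_{e_i} e_i = 0,
\end{equation*}
so the first sum also vanishes. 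Hence $\Div \tau_\varphi^2 = 0$. Combining this with equation (\ref{eq:2.8}), namely $\Div Q_\varphi = \tfrac{1}{2}\Div \tau_\varphi^2$, gives $\Div Q_\varphi = 0$.

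For the moreover statement, I would simply read off the decomposition (\ref{eq:2.7}):
\begin{equation*}
Q_\varphi = \ric_\varphi - \tfrac{1}{3}\scal_\varphi\, I + \tfrac{1}{2}\tau_\varphi^2.
\end{equation*}
Since $I$ is diagonal in any basis and $\scal_\varphi$ is a scalar (constant on the homogeneous space), diagonality of $\ric_\varphi$ in $(e_i)_i$ implies $Q_\varphi$ is diagonal in $(e_i)_i$ exactly when $\tau_\varphi^2$ is. Both directions follow from solving the linear relation for $\tau_\varphi^2$ in terms of $Q_\varphi$ and $\ric_\varphi$.

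There is no real obstacle here: the whole argument rests on the fact that an orthogonally nice orthonormal basis forces $\nabla_{e_i} e_i = 0$, and constant-eigenvalue diagonality then makes the computation collapse. The only point worth double-checking is the constancy of the eigenvalues of $\tau_\varphi^2$ in the left-invariant frame, which is automatic from left-invariance of $\varphi$ (and therefore of $\tau_\varphi$ and $g_\varphi$).
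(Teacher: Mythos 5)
Your proposal is correct and follows essentially the same route as the paper's proof: both use the orthogonally nice hypothesis to get $\nabla_{e_i}e_i = 0$, which kills both sums in formula (\ref{eq:2.10}), and then read the ``moreover'' statement off the decomposition (\ref{eq:2.7}). Your explicit remark that the diagonal entries are constant by left-invariance is a small point the paper leaves implicit, but otherwise the arguments coincide.
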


\begin{proof} Since $(e_i)_i$ is an orthogonally nice orthonormal basis, we have $\nabla_{e_i}e_i = 0$ $\forall$ $i$. So if $\tau_{\varphi}^2$ is diagonal, then $\tau_{\varphi}^2(e_i) = a_ie_i$ and we get $$\nabla_{e_i}(\tau_{\varphi}^2(e_i)) = \nabla_{e_i}(a_ie_i) = a_i\nabla_{e_i}e_i = 0 \, \, \, \, \, \forall \, \, i.$$ Hence the sum (\ref{eq:2.10}a) = 0. Moreover, diagonally trivial derivatives implies the sum (\ref{eq:2.10}b) = 0. Thus $\Div\tau_{\varphi}^2 = 0$. The last statement follows easily from (\ref{eq:2.7}).
\end{proof}

\begin{remark}
The converse of Proposition 2.8 is not true; in the case of $\mathfrak n_4$, $\Div\tau_{\varphi_4}^2 = 0$ while $\tau_{\varphi_4}^2$ is not diagonal (see Section~\ref{sec:3}). 
\end{remark}

We now state a key lemma used in the proof of the non-divergence-free cases of Theorem 1.6. This key lemma is an instance of [Proposition 3.1, \cite{Gri21}]. We also include [Corollary 3.2, \cite{Gri21}] as it will be used to prove some cases of Theorem 1.6.

\begin{lemma}[Key Lemma]
Let $(M, \varphi)$ be a closed $G_2$-structure. For any gradient Laplacian soliton $(\varphi, \nabla f, \lambda)$, we have 
\begin{equation}\label{eq:2.11} g(\ric(\nabla f), \cdot) = -\dfrac{1}{2}\Div \tau_{\varphi}^2 + \nabla \tr q_{\varphi}.\end{equation} If in addition $\tr q_{\varphi}$ is constant (e.g., when $M$ is homogeneous)  then \begin{equation}\label{eq:2.12} g(\ric(\nabla f), \cdot) = -\dfrac{1}{2}\Div\tau_{\varphi}^2.\end{equation}
\end{lemma}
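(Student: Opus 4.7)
The plan is to differentiate the gradient Laplacian soliton equation (\ref{eq:2.5}) and combine the result with a Bochner-type identity for $\Div(\nabla\nabla f)$. Starting from $\hess f = -q_\varphi - \frac{1}{3}\lambda g_\varphi$ and using that the Levi-Civita connection is metric compatible (so $\Div g_\varphi = 0$), I obtain $\Div(\hess f) = -\Div q_\varphi$ as a $1$-form.

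Next, I would invoke the Bochner identity already cited earlier in the paper from [Lemma 2.1, \cite{PW09}]:
$$\Div(\nabla\nabla f) = g(\ric(\nabla f), \cdot) + d\Delta f.$$
Combining with the previous step yields $g(\ric(\nabla f), \cdot) = -\Div q_\varphi - d\Delta f$. To rewrite $d\Delta f$, I take the $g_\varphi$-trace of the soliton equation: $\Delta f = \tr\hess f = -\tr q_\varphi - \frac{7}{3}\lambda$, so $d\Delta f = -d\tr q_\varphi$, which is $-\nabla\tr q_\varphi$ under the usual metric identification.

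To finish, I apply equation (\ref{eq:2.9}), which, via the type change $q = -q_\varphi - \frac{1}{3}\lambda g_\varphi$ and the fact that $\Div g_\varphi = 0$, translates into $\Div q_\varphi = \frac{1}{2}\Div \tau_\varphi^2$. Substituting this into the previous display gives (\ref{eq:2.11}). For the homogeneous case, $\tr q_\varphi$ is a $G$-invariant smooth function on $M = G/K$ and hence constant, so $\nabla\tr q_\varphi = 0$ and (\ref{eq:2.11}) collapses to (\ref{eq:2.12}).

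The computation is essentially routine; the main bookkeeping obstacle is keeping signs consistent through the type change $q \leftrightarrow q_\varphi$ and the minus sign introduced by (\ref{eq:2.6}), together with verifying that the $\frac{1}{3}\lambda$ terms indeed drop out upon differentiating. Since the result is flagged as an instance of [Proposition 3.1, \cite{Gri21}], I expect no deeper obstruction beyond matching conventions for $Q_\varphi$, $\tau_\varphi^2$, and the twice-contracted Bianchi identity used to derive (\ref{eq:2.9}).
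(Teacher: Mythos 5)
Your proposal is correct and follows essentially the same route as the paper's proof: take the divergence of the soliton equation, apply the Bochner formula from [Lemma 2.1, \cite{PW09}], eliminate $\nabla\Delta f$ by tracing the soliton equation, and invoke $\Div Q_\varphi = \tfrac{1}{2}\Div\tau_\varphi^2$ from (\ref{eq:2.8})--(\ref{eq:2.9}). The only cosmetic difference is that you work with $(0,2)$-tensors throughout while the paper type-changes to $(1,1)$-tensors first; the content is identical.
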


\begin{proof} The gradient Laplacian soliton equation (\ref{eq:2.5}) type changed to $(1,1)$-tensors is \begin{equation}\label{eq:2.13}\nabla \nabla f = -Q_{\varphi} - \dfrac{1}{3}\lambda I.\end{equation} Taking the divergence of (\ref{eq:2.13}) and using the Bochner formula, $\Div \nabla \nabla f = \ric(\nabla f) + \nabla \Delta f$ ([Lemma 2.1, \cite{PW09}]), gives \begin{equation}\label{eq:2.14}\ric(\nabla f) + \nabla \Delta f = -\Div Q_{\varphi}.\end{equation} On the other hand, taking the trace of (\ref{eq:2.13}) gives \begin{equation}\label{eq:2.15}\Delta f = -\tr Q_{\varphi} -\dfrac{7}{3}\lambda .\end{equation} Substituting (\ref{eq:2.15}) into (\ref{eq:2.14}) yields \begin{equation}\label{eq:2.16}\ric(\nabla f) = -\Div Q_\varphi + \nabla \tr Q_{\varphi}.\end{equation} Combining (\ref{eq:2.8}) and (\ref{eq:2.16}) yields (\ref{eq:2.11}). If $\tr q_\varphi$ is constant, $\nabla \tr q_{\varphi} = 0$ and we get (\ref{eq:2.12}). The fact that $\tr q_\varphi$ is constant on homogeneous spaces follows from observing that it is a constant multiple of $\scal_\varphi$, which is constant on homogeneous spaces.
\end{proof}

\begin{corollary}[{[Corollary 3.2, \cite{Gri21}]}]
For any constant trace, divergence-free $2$-tensor $q$, the gradient solitons of its flow has the property that $\ric(\nabla f) = 0$. 
\end{corollary}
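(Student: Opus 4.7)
The plan is to follow essentially the same computation used in the proof of the Key Lemma, but stripped of the $G_2$-specific bookkeeping, since the statement is the clean abstract skeleton sitting underneath that argument. Assume $q$ is a symmetric $(0,2)$-tensor with constant trace and $\Div q = 0$, and let $Q$ denote its $(1,1)$ dual via $g$, so that $Q$ is likewise symmetric with constant trace $\tr Q = \tr q$ and $\Div Q = 0$. The gradient soliton of the flow generated by $q$ is by definition a function $f$ satisfying $\hess f = q$ (up to the trivial constant multiple of the metric that would only shift $\Delta f$ by a constant anyway), which in $(1,1)$ form reads
\begin{equation*}
\nabla \nabla f \;=\; Q.
\end{equation*}

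First I would take the divergence of both sides and apply the Bochner identity $\Div(\nabla\nabla f) = \ric(\nabla f) + \nabla \Delta f$ (cited as [Lemma 2.1, \cite{PW09}] earlier in the paper), obtaining
\begin{equation*}
\ric(\nabla f) + \nabla \Delta f \;=\; \Div Q \;=\; 0.
\end{equation*}
Next, taking the trace of $\nabla \nabla f = Q$ gives $\Delta f = \tr Q$. By hypothesis $\tr Q$ is constant, so $\nabla \Delta f = 0$. Substituting back yields $\ric(\nabla f) = 0$, which is the claim.

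There is really no obstacle here: the proof is a direct two-line unwinding of the Bochner formula exactly as in Lemma 2.10, and the two hypotheses on $q$ are used precisely to kill the two terms $\Div Q$ and $\nabla \Delta f$ that obstruct the identity. The only mild care to take is the $(0,2)$-vs-$(1,1)$ type change, which is harmless because $g$ is parallel, so that $\Div q = 0$ is equivalent to $\Div Q = 0$ and $\tr q = \tr Q$. In fact, this corollary can be viewed as a restatement of the Key Lemma in the case where the $G_2$-torsion contribution $\tfrac{1}{2}\tau_\varphi^2$ and the cosmological term $\tfrac{1}{3}\lambda g_\varphi$ are absorbed into a single abstract $q$.
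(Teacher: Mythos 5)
Your proof is correct and is essentially the argument the paper relies on: the corollary is exactly the Key Lemma's Bochner-formula computation (take divergence of $\nabla\nabla f = Q$, use $\Div(\nabla\nabla f) = \ric(\nabla f) + \nabla\Delta f$, then kill $\Div Q$ by the divergence-free hypothesis and $\nabla\Delta f$ by the constant-trace hypothesis), stripped of the $G_2$-specific terms. Your parenthetical about the harmless $cg$ shift and the $(0,2)$-vs-$(1,1)$ type change is also handled correctly.
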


\begin{definition}
A homogeneous $G_2$-structure $(M, \varphi)$ is \textit{Laplacian flow diagonal} if the $\Aut(M, \varphi)$-invariant Laplacian flow solution $\varphi(t)$ starting at $\varphi$ satisfies the following property: at some $p \in M$, there is an orthonormal basis $\beta$ with respect to  $\left<\cdot, \cdot\right>_{\varphi}$ at $T_pM$, such that $Q_\varphi(t)$ is diagonal with respect to $\beta$ for all $t$.
\end{definition}

\begin{remark}
For homogeneous Laplacian solitons, $(M = G/K, \varphi)$ being Laplacian flow diagonal is equivalent to it being an algebraic soliton (see [Theorem 4.10, \cite{Lau17a}]). 
\end{remark}

\begin{corollary}
Let $G$ be a Lie group with closed $G_2$-structure $\varphi$ that is Laplacian flow diagonal with respect to an orthogonally nice orthonormal basis $(e_i)_i$. Suppose $\ric_{\varphi}$ is diagonal with respect to $(e_i)_i$. 
\begin{enumerate} 
    \item If $(\varphi, \nabla f, \lambda)$ is a gradient Laplacian soliton, then $G$ must be a product metric $\mathbb R^k \times N$ with $f$ constant on $N$.
    
    \item If in addition the kernel of the Ricci tensor is trivial, then $\varphi$ is not a gradient soliton.
\end{enumerate}
\end{corollary}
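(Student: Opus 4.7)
The plan is to reduce the corollary to the divergence-free branch of the Structure Theorem by combining the Laplacian flow diagonal hypothesis with the decomposition (\ref{eq:2.7}) of $Q_\varphi$, and then conclude part (2) via the Key Lemma.

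First I would exploit the hypothesis that $\varphi$ is Laplacian flow diagonal with respect to the orthogonally nice orthonormal basis $(e_i)_i$. In particular, at $t = 0$, $Q_\varphi$ itself is diagonal with respect to $(e_i)_i$. Rewriting (\ref{eq:2.7}) as
\begin{equation*}
\tfrac{1}{2}\tau_\varphi^2 \;=\; Q_\varphi \;-\; \ric_\varphi \;+\; \tfrac{1}{3}\scal_\varphi\, I,
\end{equation*}
the right-hand side is the sum of three operators that are all diagonal in $(e_i)_i$ (the first by Laplacian flow diagonality, the second by assumption, the third trivially). Hence $\tau_\varphi^2$ is diagonal in $(e_i)_i$.

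Next I would invoke Proposition 2.8: because $(e_i)_i$ is an orthogonally nice orthonormal basis and $\tau_\varphi^2$ is diagonal with respect to it, $\tau_\varphi^2$ is divergence-free. The Structure Theorem (Theorem 1.1) then forces the divergence-free alternative, so $(G, g_\varphi)$ is isometric to a product $N \times \mathbb R^k$ on which $f$ is constant on the $N$-factor. This proves (1).

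For (2), I would feed $\Div \tau_\varphi^2 = 0$ into the Key Lemma (equation (\ref{eq:2.12})), which on homogeneous spaces yields
\begin{equation*}
g_\varphi(\ric_\varphi(\nabla f), \cdot) \;=\; -\tfrac{1}{2}\Div \tau_\varphi^2 \;=\; 0,
\end{equation*}
so $\nabla f \in \ker \ric_\varphi$. If $\ker \ric_\varphi$ is trivial then $\nabla f = 0$, i.e.\ $f$ is constant; as we only consider non-trivial gradient solitons (those with $\nabla f \ne 0$), this contradicts the existence of a gradient soliton structure on $\varphi$. No step is genuinely obstructive here — everything is a direct bookkeeping application of results already established in Section~\ref{sec:2}; the only subtlety is simply noting that Laplacian flow diagonality at a single time supplies the diagonality of $Q_\varphi$ needed to isolate $\tau_\varphi^2$.
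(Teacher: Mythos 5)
Your proposal is correct and follows essentially the same route as the paper: the paper likewise uses the Laplacian flow diagonal hypothesis to get $Q_\varphi$ diagonal, invokes the last statement of Proposition 2.8 (whose proof is exactly your rearrangement of (\ref{eq:2.7})) to conclude $\tau_\varphi^2$ is diagonal, applies the first part of Proposition 2.8 to get $\Div\tau_\varphi^2 = 0$ and hence part (1) via the Structure Theorem, and then uses the Key Lemma plus triviality of $\ker\ric_\varphi$ for part (2). The only difference is that you unpack the diagonality of $\tau_\varphi^2$ directly from (\ref{eq:2.7}) rather than citing Proposition 2.8's final assertion, which is an inessential presentational choice.
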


\begin{proof}
By the last statement of Proposition 2.8, $\tau_\varphi^2$ is diagonal. Since $(e_i)_i$ is an orthogonally nice orthonormal basis, we get $\Div \tau_{\varphi}^2 = 0$ by Proposition 2.8. Thus (1) follows from the Structure Theorem.  To show (2), note that the Key Lemma gives that $\ric_\varphi(\nabla f) = 0$. Since the kernel of $\ker\ric_\varphi = 0$, it must be that $\nabla f = 0$. Hence $f$ is constant, a contradiction. 
\end{proof}

\begin{remark} Corollary 2.14 can be useful in determining the structure of a homogeneous closed gradient Laplacian soliton without having to compute $\Div\tau_\varphi^2$ explicitly.
\end{remark}

\subsection{Some related consequences of the gradient soliton equation}

\begin{definition}
$G_2$-structures $(\mathfrak g_1, \psi_1)$ and $(\mathfrak g_2, \psi_2)$ are said to be \textit{equivalent}, denoted $(\mathfrak g_1, \psi_1) \simeq (\mathfrak g_2, \psi_2)$, if there is a Lie algebra isomorphism $h: \mathfrak g_1 \rightarrow \mathfrak g_2$ such that $h\cdot \psi_1 = \psi_2$. Moreover, we say that $G_2$ structures are \textit{homothetic} if there is a $c \in \mathbb R^*$ such that $(\mathfrak g_1, \psi_1) \simeq (\mathfrak g_2, c\psi_2)$. 
\end{definition}

We show that if two $G_2$-structures on the same Lie algebra are equivalent or homothetic, then one is a gradient Laplacian soliton if and only if the other is. This is needed for Theorem 1.6.

\begin{proposition}
If $\psi_1, \psi_2$ are positive and if either $(\mathfrak g, \psi_1) \simeq (\mathfrak g, \psi_2)$ or $(\mathfrak g, \psi_1) \simeq (\mathfrak g, c\psi_2)$ for some $c \in \mathbb R^*$, then $\psi_1$ is a gradient Laplacian soliton if and only if $\psi_2$ is.
\end{proposition}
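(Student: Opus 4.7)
The plan is to reduce the proposition to two self-contained facts: (a) a Lie algebra isomorphism $h:\mathfrak{g}\to\mathfrak{g}$ with $h\cdot\psi_1=\psi_2$ preserves the gradient Laplacian soliton property, and (b) the overall rescaling $\psi\mapsto c\psi$ for $c\in\mathbb{R}^*$ does too. The homothetic case of the proposition then follows by composing (a) and (b).

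For (a), I would lift $h$ to a Lie group automorphism $H$ of the simply connected group $G$ integrating $\mathfrak{g}$. Because the metric $g_\psi$, the Hodge star $*_\psi$, the Levi--Civita connection, the Ricci tensor, and the torsion form $\tau_\psi$ are each canonically determined by $\psi$ and thus natural under diffeomorphisms intertwining the 3-form, the identity $H\cdot\psi_1=\psi_2$ forces $H^*g_{\psi_2}=g_{\psi_1}$, $H^*\tau_{\psi_2}=\tau_{\psi_1}$, and $H^*q_{\psi_2}=q_{\psi_1}$, while Hessians transform by $H^*(\hess^{g_{\psi_2}}f)=\hess^{g_{\psi_1}}(f\circ H)$. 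Applying $H^*$ to the gradient soliton equation (\ref{eq:2.5}) for $\psi_2$ with potential $f$ yields the same equation for $\psi_1$ with potential $f\circ H$ and the same $\lambda$; the reverse implication uses $H^{-1}$.

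For (b), I would unwind how each induced object scales under $\psi\mapsto c\psi$. The defining formula for $g_\psi$ gives $g_{c\psi}=|c|^{2/3}\,g_\psi$, so the Levi--Civita connection, the Ricci $(0,2)$-tensor, and the Hessian $\hess f$ (as a $(0,2)$-tensor) are invariant, while $\scal_{c\psi}=c^{-2/3}\scal_\psi$ (with real cube roots), so that $\scal_\psi\,g_\psi$ is invariant. Using $\tau_\psi=-*_\psi d*_\psi\psi$ together with the constant conformal scaling formula $*_{c\psi}\alpha=c^{(7-2k)/3}*_\psi\alpha$ on $k$-forms shows $\tau_{c\psi}=c^{1/3}\tau_\psi$ and the corresponding skew operator satisfies $\tau_{c\psi}^{\mathrm{op}}=c^{-1/3}\tau_\psi^{\mathrm{op}}$, whence $\tau_\psi^2$ is invariant as a $(0,2)$-tensor. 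Combining these, $q_\psi$ as a $(0,2)$-tensor is invariant, and the soliton equation transforms as
\[
\hess f + q_\psi + \tfrac{1}{3}\lambda\,g_\psi=0 \;\Longleftrightarrow\; \hess f + q_{c\psi} + \tfrac{1}{3}(c^{-2/3}\lambda)\,g_{c\psi}=0,
\]
showing that $(c\psi,\nabla^{g_{c\psi}}f,\,c^{-2/3}\lambda)$ is a gradient Laplacian soliton if and only if $(\psi,\nabla^{g_\psi}f,\lambda)$ is.

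The main bookkeeping hurdle is the case $c<0$: then $\vol_{c\psi}$ has opposite orientation from $\vol_\psi$, which introduces signs into the scaling of the Hodge star, and the factors $c^{1/3}$, $c^{-2/3}$ must be interpreted via real cube roots rather than the usual positive branch. Once these signs are tracked consistently, parts (a) and (b) together establish the proposition.
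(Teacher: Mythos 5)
Your proposal is correct, and it reaches the conclusion by a genuinely different (more computational, more self-contained) route than the paper. The paper works entirely at the level of the $3$-form soliton equation (\ref{eq:1.1}): it cites Nicolini's transformation rules $\Delta_{h\cdot\psi}(h\cdot\psi)=h\cdot\Delta_\psi\psi$, $\Delta_{c\psi}(c\psi)=c^{1/3}\Delta_\psi\psi$, and $g_{c\psi}=c^{2/3}g_\psi$, and then pushes the equation $\Delta_{\psi}\psi=\lambda\psi+\mathcal L_{\nabla f}\psi$ through the isomorphism and the rescaling in one chain of equalities, obtaining the explicit new soliton data $(\psi_2,\nabla^{g_{\psi_2}}c^{2/3}(f\circ h^{-1}),c^{2/3}\lambda)$. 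You instead work with the metric form (\ref{eq:2.5}) and re-derive the scaling laws from scratch: your identities $*_{c\psi}=c^{(7-2k)/3}*_\psi$ on $k$-forms (with real cube roots, which correctly absorbs the orientation reversal when $c<0$ since $7-2k$ is odd), $\tau_{c\psi}=c^{1/3}\tau_\psi$, and the resulting invariance of $\ric$, $\tau_\varphi^2$, and hence $q_\varphi$ as $(0,2)$-tensors all check out, and they make transparent \emph{why} only the $\lambda g_\varphi$ term rescales. The trade-off: the paper's argument is shorter because the analytic content is outsourced to [Lemma 2.1--2.2, \cite{Nic18}], whereas yours is longer but independent of those lemmas and exhibits the scale-invariance of each constituent of $q_\varphi$ explicitly. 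Your $\lambda\mapsto c^{-2/3}\lambda$ is consistent with the paper's $c^{2/3}\lambda$ once one accounts for the opposite direction of the rescaling ($\psi\mapsto c\psi$ versus $c\psi_2\mapsto\psi_2$). The only point to make explicit is that you are verifying the metric equation (\ref{eq:2.5}) while the paper verifies the form equation (\ref{eq:1.1}); since the paper defines gradient Laplacian solitons via (\ref{eq:1.2})/(\ref{eq:2.5}), your formulation actually matches the stated definition more directly, but you should say in one line that the two formulations are being identified (via the injectivity of $i_\varphi$ discussed in Section~\ref{sec:2}) so that the reader knows which condition is being transported.
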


\begin{proof}
For any diffeomorphism $\varphi \in \Diff(M)$, tensor $T$, and vector field $X$, we have $$\varphi^*(\mathcal L_X T) = \mathcal L_{\varphi^*X}(\varphi^*T)$$ (see exercise 1.23 in \cite{CLN06}). Also, if $f: M \rightarrow \mathbb R$, we have $$\varphi^*(\nabla^g f) = \nabla^{\varphi^* g}(f \circ \varphi).$$ Suppose $(\mathfrak g, \psi_1) \simeq (\mathfrak g, \psi_2)$ and $\psi_2$ is a gradient Laplacian soliton, i.e., $\Delta_{\psi_2}\psi_2 = \lambda\psi_2 + \mathcal L_{\nabla^{g_{\psi_2}} f}\psi_2$ for some potential function $f$. Since $(\mathfrak g, \psi_1) \simeq (\mathfrak g, \psi_2)$, there is a Lie algebra isomorphism $h:\mathfrak g \rightarrow \mathfrak g$ in $\Aut(\mathfrak g)$ such that $h\cdot \psi_2 = \psi_1$ [Note: $h \in \Diff(\mathfrak g)$ as any linear isomorphism of vector spaces is smooth]. For any geometric structure $\gamma$, $h \cdot \gamma = (h^{-1})^*\gamma$. Moreover, [Lemma 2.2 (ii)(a), \cite{Nic18}] states that for any $h \in \Aut(\mathfrak g)$, $\Delta_{h \cdot \psi}h \cdot \psi = h\cdot \Delta_\psi \psi.$ Putting these together, we get
\begin{align*}\Delta_{\psi_1}\psi_1 &= \Delta_{h \cdot \psi_2}(h \cdot \psi_2) = h \cdot \Delta_{\psi_2}\psi_2  = h \cdot (\lambda\psi_2 + \mathcal L_{\nabla^{g_{\psi_2}} f}\psi_2)  \\ &= \lambda (h \cdot \psi_2) + h \cdot(\mathcal L_{\nabla^{g_{\psi_2}}f}\psi_2) =  \lambda \psi_1 + (h^{-1})^*(\mathcal L_{\nabla^{g_{\psi_2}} f}\psi_2) \\ &= \lambda \psi_1 + \mathcal L_{(h^{-1})^*(\nabla^{g_{\psi_2}} f)}((h^{-1})^*\psi_2) = \lambda \psi_1 + \mathcal L_{\nabla^{(h^{-1})^*g_{\psi_2}} (f\circ h^{-1})}(h \cdot \psi_2) \\ &= \lambda \psi_1 + \mathcal L_{\nabla^{g_{\psi_1}} (f\circ h^{-1})}\psi_1, \end{align*} where in the last equality we used $ (h^{-1})^*g_{\psi_2} = h \cdot g_{\psi_2} = g_{h \cdot \psi_2} = g_{\psi_1}$ for any $h \in \GL(\mathfrak g)$. Thus $\psi_1$ is also a gradient Laplacian soliton. If instead $\psi_1$ is a gradient Laplacian soliton, the same argument with $h^{-1}$ in place of $h$ gives that $\psi_2$ is also a gradient soliton.

Suppose $(\mathfrak g, \psi_1) \simeq (\mathfrak g, c\psi_2)$ for some $c \in \mathbb R^*$ and that $\psi_1$ is a gradient Laplacian soliton. Since $(\mathfrak g, \psi_1) \simeq (\mathfrak g, c\psi_2)$, there is some Lie algebra isomorphism $h: \mathfrak g \rightarrow \mathfrak g$ in $\Aut(\mathfrak g)$ such that $h \cdot \psi_1 = c\psi_2$. By [Lemma 2.2 (ii)(b), \cite{Nic18}], $\Delta_{c\psi} c\psi = c^{\frac{1}{3}}\Delta_\psi\psi$. Then \begin{align*}
    c^{\frac{1}{3}}\Delta_{\psi_2}\psi_2 &= \Delta_{c\psi_2}(c\psi_2) = \Delta_{h \cdot \psi_1}(h\cdot \psi_1) = h\cdot (\Delta_{\psi_1}\psi_1) = h \cdot(\lambda\psi_1 + \mathcal L_{\nabla^{g_{\psi_1}} f}\psi_1) \\ &= \lambda (h \cdot \psi_1) + (h^{-1})^*(\mathcal L_{\nabla^{g_{\psi_1}} f}\psi_1) = c\lambda \psi_2 + \mathcal L_{(h^{-1})^* (\nabla^{g_{\psi_1}}f)}  ((h^{-1})^* \psi_1) \\ & = c\lambda \psi_2 + \mathcal L_{ \nabla^{(h^{-1})^*g_{\psi_1}}(f \circ h^{-1})}  (h \cdot \psi_1) = c\lambda \psi_2 + \mathcal L_{ \nabla^{c^{2/3}g_{\psi_2}}(f\circ h^{-1})} (c \psi_2) \\ &= c\lambda \psi_2 + c\mathcal L_{ \nabla^{g_{\psi_2}}(f\circ h^{-1})}  \psi_2,
\end{align*} where we used [Lemma 2.1(iii), \cite{Nic18}] $$(h^{-1})^* g_{\psi_1} = h\cdot g_{\psi_1} = g_{h \cdot \psi_1} = g_{c\psi_2} = c^{2/3}g_{\psi_2}$$ in the second to last equality and $\nabla^{c^{2/3}g_{\psi_2}} = \nabla^{g_{\psi_2}}$ as $c^{2/3} > 0$ in the last. So $$   c^{\frac{1}{3}}\Delta_{\psi_2}\psi_2 = c\lambda \psi_2 + c\mathcal L_{ \nabla^{g_{\psi_2}}(f\circ h^{-1})}  \psi_2$$ if and only if $$ \Delta_{\psi_2}\psi_2 = c^{\frac{2}{3}}\lambda \psi_2 + \mathcal L_{ \nabla^{g_{\psi_2}}(c^{\frac{2}{3}}f\circ h^{-1})}  \psi_2.$$ Thus $(\psi_2, \nabla^{g_{\psi_2}} c^{\frac{2}{3}}(f \circ h^{-1}), c^{\frac{2}{3}}\lambda)$ is a gradient Laplacian soliton. Similar arguments show if $\psi_2$ is a gradient soliton, then so is $\psi_1$.
\end{proof}

We include for completeness some consequences of the gradient Laplacian soliton equation (\ref{eq:2.5}) on closed $G_2$-structures (see \cites{Bry06, LW17, HN21} for more details). Note these results are immediate consequences of formulas in Section 9 of \cite{LW17}.

\begin{lemma}
Let $(M, \varphi)$ be a closed $G_2$-structure. If $(\varphi, \nabla f, \lambda)$ is a gradient Laplacian soliton, then \begin{enumerate}
    \item $\scal_{\varphi} \leq 0$
    \item $\Delta f = -\dfrac{7}{3}\lambda - \dfrac{2}{3}\scal_{\varphi}$
    \item $\nabla \Delta f = -\dfrac{2}{3}\nabla \scal_{\varphi}$
    \item $\nabla f \lrcorner T = 0$ where $T = -\frac{1}{2}\tau_\varphi$.
\end{enumerate}
\end{lemma}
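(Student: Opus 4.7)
The plan is to handle (1)--(3) together by direct computation from the trace of the gradient soliton equation, and then to treat (4) separately, since it reflects information contained in the 3-form soliton equation beyond what the symmetric-tensor version (\ref{eq:1.2}) sees.

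For (1), the identity $\scal_\varphi = -\tfrac{1}{2}|\tau_\varphi|^2$ recorded in the background subsection already shows $\scal_\varphi \le 0$ for any closed $G_2$-structure, independently of the soliton hypothesis (with equality precisely in the torsion-free case). For (2), the only real computation is $\tr Q_\varphi$. From (\ref{eq:2.7}) together with $\tr \ric_\varphi = \scal_\varphi$ and $\tr I = 7$, I first write
$\tr Q_\varphi = \scal_\varphi - \tfrac{7}{3}\scal_\varphi + \tfrac{1}{2}\tr\tau_\varphi^2$.
Combining the identities $\scal_\varphi = -\tfrac{1}{2}|\tau_\varphi|^2$ and $|\tau_\varphi|^2 = -\tfrac{1}{2}\tr\tau_\varphi^2$ from the background gives $\tr\tau_\varphi^2 = 4\scal_\varphi$, so $\tr Q_\varphi = \tfrac{2}{3}\scal_\varphi$. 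Taking the trace of (\ref{eq:2.13}), which is precisely (\ref{eq:2.15}), then yields (2). Part (3) follows from (2) by applying $\nabla$, since $\lambda \in \mathbb R$ is constant.

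For (4), the key observation is that the injective map $i_\varphi: S^2(T^*M) \to \Lambda^3(T^*M)$ used by Lotay--Wei to pass from (\ref{eq:1.1}) to (\ref{eq:1.2}) is \emph{not} surjective onto $\Omega^3$; its image misses the $G_2$-irreducible summand $\Omega^3_7$. Thus the full 3-form soliton equation carries strictly more information than the metric equation (\ref{eq:1.2}), and the extra constraint is exactly (4). Concretely, I would use $\Delta_\varphi\varphi = d\tau_\varphi$ from (\ref{eq:2.1}) on the left and $\mathcal L_{\nabla f}\varphi = d(\nabla f \lrcorner \varphi)$ (by Cartan's magic formula together with $d\varphi = 0$) on the right to rewrite (\ref{eq:1.1}) as $d(\tau_\varphi - \nabla f \lrcorner \varphi) = \lambda\varphi$, and then project onto the $\Omega^3_7$-component not hit by $i_\varphi$. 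The main obstacle in a fully self-contained argument is the representation-theoretic bookkeeping of that projection, which I would sidestep by appealing to Proposition 9.4 and the surrounding identities in Section 9 of \cite{LW17}, where (4) appears as an immediate byproduct of decomposing the 3-form soliton equation under the $G_2$-action.
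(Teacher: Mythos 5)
Your argument is correct and follows essentially the same route as the paper: (1) is the standard identity $\scal_\varphi = -\tfrac12|\tau_\varphi|^2$, (2) is the trace of the soliton equation combined with $\tr\tau_\varphi^2 = 4\scal_\varphi$ (you package this as $\tr Q_\varphi = \tfrac23\scal_\varphi$, the paper traces (\ref{eq:2.5}) directly, which is the same computation), (3) is differentiation, and (4) is deferred to Section 9 of \cite{LW17} in both cases. Your extra remark that (4) encodes the $\Omega^3_7$-component missed by $i_\varphi$ is accurate and slightly more explanatory than the paper's one-line citation, but it does not change the substance of the proof.
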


\begin{proof}
(1) is well known (see [Proposition 2.2 (iii), \cite{Lau17a}] or [Corollary 2.4, \cite{LW17}]). [Corollary 2.4, \cite{LW17}] states that $\scal_{\varphi} = -|T|^2$ where $T = -\frac{1}{2}\tau_{\varphi}$ is the full torsion tensor for closed $G_2$-structures.

Taking the trace of (\ref{eq:2.5}) yields $$\Delta f = -\scal_{\varphi} + \dfrac{7}{3}\scal_{\varphi} - \dfrac{1}{2}\tr \tau_{\varphi}^2 - \dfrac{7}{3}\lambda.$$ By Proposition 2.2 (i) and (ii) of \cite{Lau17a}, $$- \dfrac{1}{2}\tr \tau_{\varphi}^2 = |\tau_{\varphi}|^2 = -2\scal_{\varphi}.$$ Substituting this back into the preceding equation and collecting the scalar curvature terms yields (2). Taking the derivative of (2) yields (3). (4) follows from the discussion in Section 9 of \cite{LW17}.
\end{proof}

\begin{corollary}
 If $(\varphi, \nabla f, \lambda)$ is a homogeneous closed gradient Laplacian soliton and $\tau_\varphi^2$ is divergence-free, then  $$\frac{1}{2}D_X\|\nabla f\|^2 = \frac{1}{3}(\scal_\varphi - \lambda)g(\nabla f, X) \, \, \, \, \, \, \, \, \, \, \forall \, \, X \in TM.$$ If in addition $\|\nabla f\| = $ constant, then $\lambda = \scal_\varphi$. Since $\scal_\varphi \leq 0$ for closed $G_2$-structures, the soliton is either shrinking or steady.
\end{corollary}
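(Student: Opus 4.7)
The plan is to combine the gradient soliton equation (in the form (\ref{eq:2.13})), the Key Lemma (Lemma 2.10), and the torsion-gradient orthogonality (Lemma 2.18(4)) to rewrite $\nabla_{\nabla f}\nabla f$ as a multiple of $\nabla f$, from which the identity follows by symmetry of the Hessian.

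First I would rewrite $\frac{1}{2}D_X\|\nabla f\|^2$ using the identity
\begin{equation*}
\tfrac{1}{2}D_X\|\nabla f\|^2 \;=\; g(\nabla_X \nabla f, \nabla f) \;=\; \hess f(X, \nabla f) \;=\; \hess f(\nabla f, X) \;=\; g(\nabla_{\nabla f}\nabla f, X),
\end{equation*}
where the middle equality uses symmetry of the Hessian. So the task reduces to computing $\nabla_{\nabla f}\nabla f$. Applying (\ref{eq:2.13}) to $\nabla f$ and expanding $Q_\varphi$ via (\ref{eq:2.7}) yields
\begin{equation*}
\nabla_{\nabla f}\nabla f \;=\; -\ric_\varphi(\nabla f) + \tfrac{1}{3}\scal_\varphi\, \nabla f - \tfrac{1}{2}\tau_\varphi^2(\nabla f) - \tfrac{1}{3}\lambda\, \nabla f.
\end{equation*}

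Next I would kill two of the four terms. By Lemma 2.18(4), $\nabla f \lrcorner T = 0$, and since $T = -\frac{1}{2}\tau_\varphi$, this says $\tau_\varphi(\nabla f) = 0$ as an operator applied to $\nabla f$, so $\tau_\varphi^2(\nabla f) = 0$. Meanwhile, homogeneity combined with the divergence-free hypothesis on $\tau_\varphi^2$ and the Key Lemma (equation (\ref{eq:2.12})) give $\ric_\varphi(\nabla f) = 0$. Substituting these back produces
\begin{equation*}
\nabla_{\nabla f}\nabla f \;=\; \tfrac{1}{3}(\scal_\varphi - \lambda)\,\nabla f,
\end{equation*}
and pairing with $X$ gives the desired formula $\frac{1}{2}D_X\|\nabla f\|^2 = \frac{1}{3}(\scal_\varphi - \lambda) g(\nabla f, X)$.

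For the second assertion, if $\|\nabla f\|$ is constant then the left-hand side vanishes identically in $X$. Since we are considering non-trivial solitons, $\nabla f \not\equiv 0$, so choosing $X = \nabla f$ at a point where $\nabla f \ne 0$ forces $\scal_\varphi = \lambda$. The sign statement then follows immediately from Lemma 2.18(1), which asserts $\scal_\varphi \le 0$ for closed $G_2$-structures, together with the sign conventions for shrinking/steady solitons. There is no real obstacle here beyond careful sign bookkeeping; the substantive content is already packaged in Lemma 2.10, Lemma 2.18(4), and equation (\ref{eq:2.13}).
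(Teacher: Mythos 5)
Your proof is correct and follows essentially the same route as the paper: both reduce the left-hand side to $\hess f(\nabla f, X)$, kill the $\tau_\varphi^2(\nabla f)$ term via Lemma 2.18(4), kill $\ric_\varphi(\nabla f)$ via the Key Lemma and the divergence-free hypothesis, and then evaluate at $X = \nabla f$ for the second assertion. The only cosmetic difference is that you work with the $(1,1)$-tensor form $\nabla_{\nabla f}\nabla f$ while the paper stays with the $(0,2)$-form of the soliton equation.
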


\begin{proof} The gradient Laplacian soliton equation (\ref{eq:2.5}) yields
$$\hess f (\nabla f, X) = -\ric_\varphi(\nabla f, X) - \frac{1}{2}\tau_\varphi^2(\nabla f, X) + \frac{1}{3}(\scal_\varphi - \lambda)g(\nabla f, X).$$ Since $\tau_\varphi^2$ is divergence-free, by the Key Lemma we have $\ric_\varphi(\nabla f) = -\frac{1}{2}\Div \tau_\varphi^2 = 0$. By Lemma 2.18 (4), we have $\nabla f \lrcorner \tau_\varphi = 0$ and so $\tau_\varphi^2(\nabla f, X) = g(\tau_\varphi^2(\nabla f), X) = -g(\tau_\varphi(\nabla f), \tau_\varphi(X)) = 0$. By [Proposition 3.2.1 (3), \cite{Pet16}], $\hess f(\nabla f, X) = \frac{1}{2}D_X\|\nabla f\|^2$ for all $X \in TM$. Putting these items together in the soliton equation gives the desired formula. If $\|\nabla f\| = $ constant, then the left-hand side of the formula is zero while the right-hand side is $\frac{1}{3}(\scal_\varphi - \lambda) \|\nabla f\|^2$. Since $\|\nabla f\|^2 > 0$ as $f$ is non-constant, it follows that $\lambda = \scal_\varphi$. 
\end{proof}

\begin{remark}
Without the homogeneous assumption, the formula in Corollary 2.19 is $2^{-1}D_X\|\nabla f\|^2 = - g(\nabla \tr q_\varphi, X) + 3^{-1}(\scal_\varphi - \lambda)g(\nabla f, X)$.
\end{remark}

\section{Gradient Laplacion Solitons on Nilpotent Lie Groups}\label{sec:3}

In this section we prove Theorem 1.6. Tables consisting of relevant data for each nilpotent Lie algebra $(\mathfrak n_i, \varphi_i)$ are provided. Note that $\tau_{\varphi_i}$, hence $\tau_{\varphi_i}^2$, are obtained with respect to bases and corresponding structure equations from the tables in \cite{Nic18}. We first compute the divergence of $\tau_{\varphi_i}^2$. We consider divergence-free and non-divergence-free cases separately in the proof of Theorem 1.6. Lastly, we show the closed $G_2$-structure $(\mathfrak n_{12}, \varphi_{12})$ constructed in \cite{FFM16} is not gradient.\\

\noindent \textit{Notation:} $N$ is as in the structure theorem while $N$ with a subscript, $N_i$, denotes the nilpotent Lie group with corresponding nilpotent Lie algebra $\mathfrak n_i$.

\subsection{Computing $\Div \tau_{\varphi_i}^2$ for $\mathfrak n_i$}

\begin{table}
\caption{}
\resizebox{\textwidth}{!}{%
\begin{tabular}{|c|c|c|}\hline
& $(\mathfrak n_2(1,1), \varphi_2)$ & $(\mathfrak n_3(1, 1-c, c), \varphi_3)$, $0 < c < 1/2$ \\ 
\hline
$\ric_{\varphi_i}$   & $-\Diag(1, \frac{1}{2}, \frac{1}{2}, 0, -\frac{1}{2}, -\frac{1}{2},  0)$ & $\dfrac{1}{2}\Diag(-2+2c -  c^2, -1-c^2, -1+2-2c^2, 1, (-1 + c)^2, c^2,  0)$ \\ \hline
  $\tau_{\varphi_i}$ & $-e^{35} + e^{26}$  & $-ce^{16} + (1-c)e^{25} - e^{34}$ \\ \hline
    $\tau_{\varphi_i}^2$ & $\Diag(0, -1,  -1, 0, -1, -1, 0)$ & $\Diag(-c^2, -(1 - c)^2, -1, -1, -(1-c)^2, -c^2, 0)$  \\ \hline
      $Q_{\varphi_i}$ & $\frac{1}{3}\Diag(-2,-2,-2,1,1,1)$ & $\frac{1- c + c^2}{3}\Diag(-2,-2,-2,1,1,1,1)$  \\ \hline
      $\lambda_i$ & $5$ & $5(1 - c + c^2)$ \\ \hline
\end{tabular}}
\end{table}

\begin{table}
\caption{}
\resizebox{\textwidth}{!}{%
\begin{tabular}{|c|c|c|}
\hline
& $(\mathfrak n_4(\sqrt{2},1, \sqrt{2}, 1), \varphi_4)$ & $(\mathfrak n_6(\sqrt{2},\sqrt{2},1, 1), \varphi_6)$ \\ 
\hline
$\ric_{\varphi_i}$   & $\Diag(-2, -2, \frac{1}{2}, -1,  -\frac{1}{2}, \frac{3}{2},  \frac{1}{2})$ & $\Diag(-3, -1, -1, \frac{1}{2}, \frac{1}{2}, \frac{1}{2}, \frac{1}{2})$ \\ 
\hline
  $\tau_{\varphi_i}$ & $ -\sqrt{2}e^{34} + \sqrt{2}e^{16} - e^{56} + e^{37}$ &  $ - \sqrt{2}e^{34} + \sqrt{2}e^{25} - e^{56} + e^{47}$\\ \hline
    $\tau_{\varphi_i}^2$ & $ \begin{pmatrix}
    -2 & 0& 0& 0& \sqrt{2}& 0 & 0\\
    0 &0 &0 &0 & 0& 0& 0\\
    0& 0& -3& 0& 0& 0&0 \\
    0 & 0&0 &-2 &0 &0 &\sqrt{2} \\
    \sqrt{2}& 0& 0& 0& -1& 0& 0\\
    0 &0 &0 & 0&0 &-3 &0 \\
    0 &0 &0 & \sqrt{2}& 0& 0& -1\\
    \end{pmatrix}$ & $\begin{pmatrix} 
0& 0&0 & 0& 0& 0& 0\\
0&-2 & 0& 0& 0& -\sqrt{2}& 0 \\
0& 0& -2& 0&0 &0  &-\sqrt{2} \\
0& 0& 0& -3 & 0& 0& 0\\
0&0 &0 &0 &-3 &0 & 0\\
0& -\sqrt{2}&0 & 0& 0&-1 &0 \\
0& 0& -\sqrt{2}&0 & 0& 0&-1 \\
\end{pmatrix}$  \\ \hline
      $Q_{\varphi_i}$ & $\begin{pmatrix} -2 & 0& 0& 0&\frac{\sqrt{2}}{2} & 0& 0\\
      0& -1& 0& 0& 0& 0& 0\\ 
      0& 0& 0& 0& 0& 0& 0\\ 
      0& 0& 0& -1&0 &0 &\frac{\sqrt{2}}{2} \\ 
      \frac{\sqrt{2}}{2}&0 &0 &0 &0 &0 &0 \\ 
      0& 0& 0& 0& 0& 1& 0\\ 
      0& 0& 0& \frac{\sqrt{2}}{2}&0 & 0& 1\\ \end{pmatrix}$  & $\begin{pmatrix} -2& 0& 0& 0& -\frac{\sqrt{2}}{2}&0 & 0\\
      0&-1 &0 & 0& 0& 0& 0\\ 
      0& 0& -1& 0& 0& 0&-\frac{\sqrt{2}}{2} \\ 
      0& 0& 0& 0& 0& 0& 0\\ 
      0& 0& 0& 0& 0& 0& 0\\ 
      0& -\frac{\sqrt{2}}{2} &0 &0 &0 &1 &0 \\ 
      0& 0& -\frac{\sqrt{2}}{2}& 0& 0&0 & 1\\ \end{pmatrix}$  \\ \hline
    $\lambda_i$ & 9 &  9\\ \hline
\end{tabular}}
\end{table}

\begin{table}
\caption{}
\resizebox{\textwidth}{!}{%
\begin{tabular}{|c|c|c|}\hline
&$(\mathfrak n_5(\sqrt{2}, 1, 1, \sqrt{2}), \varphi_5)$ & $(\mathfrak n_7(-4, 2, 2, \sqrt{6}, \sqrt{6}), \varphi_7)$  \\ \hline
$\ric_{\varphi_i}$& $\Diag(-2, -2, \frac{1}{2}, -\frac{1}{2}, -1, \frac{1}{2}, \frac{3}{2})$ & $\Diag(-10, -10, 3, 11, -1, -1, -10)$ \\ \hline
$\tau_{\varphi_i}$& $\tau_{\varphi_5} = -e^{46} + e^{37} - \sqrt{2}e^{35} + \sqrt{2}e^{17}$ & $\tau_{\varphi_7} = -2e^{15} + 2e^{26} - \sqrt{6}e^{36} + \sqrt{6}e^{45} - 4e^{47},$ \\ \hline
$\tau_{\varphi_i}^2$& $\begin{pmatrix} 
-2 & 0 & - \sqrt{2} &0  &0  &0  &0 \\
0 & 0& 0& 0& 0& 0& 0\\
- \sqrt{2} & 0& - 3& 0 & 0  &0  & 0  \\
0& 0& 0& -1& 0& 0 &0 \\
0& 0& 0&0 &-2 &0  & \sqrt{2}\\
0& 0& 0& 0& 0& -1& 0\\
0& 0& 0&0 & \sqrt{2}&0 &-3 \\
\end{pmatrix}$& $\begin{pmatrix}
-4 & 0 & 0 & 2\sqrt{6} & 0 & 0 & 0  \\
0 & -4 & 2\sqrt{6} & 0 & 0 & 0 & 0\\
0 & 2\sqrt{6} & -6 & 0 & 0 & 0 & 0 \\
2\sqrt{6} & 0 & 0 & -22 & 0 & 0 & 0\\
0 & 0 & 0 & 0 & -10 & 0 & 4\sqrt{6}\\
0 & 0 & 0 & 0 & 0 & -10 & 0 \\
0 & 0 & 0 & 0 & 4\sqrt{6} & 0 & -16 \\
\end{pmatrix}$ \\ \hline
$Q_{\varphi_i}$& $\begin{pmatrix} -2 & 0 & -\frac{\sqrt{2}}{2} & 0 & 0 & 0 & 0 \\
0 & -1 & 0 & 0 & 0 & 0 & 0 \\ 
-\frac{\sqrt{2}}{2} & 0 & 0 & 0& 0 &0 &0 \\
0 & 0 & 0 & 0& 0 &0 &0 \\
0 & 0& 0 & 0 & -1 & 0 & \frac{\sqrt{2}}{2}\\
0 & 0& 0& 0& 0& 1 & 0\\
0 & 0 & 0 & 0 & \frac{\sqrt{2}}{2} & 0 & 1\end{pmatrix}$ & $\begin{pmatrix} -4 & 0 & 0 & 0 & -1 & 0 & 0 \\
0 & -4 & 0 & 0 & 0 & 1 & 0 \\ 
0 & 0 & 9 & 0& 0 &-\frac{\sqrt{6}}{2} & 0 \\
0 & 0 & 0 & 17 & \frac{\sqrt{6}}{2} & 0 & -2 \\
1 & 0 & 0 & -\frac{\sqrt{6}}{2} & 5 & 0 & 0\\
0 & -1 & \frac{\sqrt{6}}{2} & 0 & 0 & 5 & 0\\
0 & 0 & 0 & 2 & 0 & 0 & -4\end{pmatrix}$  \\ \hline
$\lambda_i$ &$9$ & $54$ \\ \hline
\end{tabular}}
\end{table}

\begin{proposition}
Let $(\mathfrak n_i, \varphi_i)$, $i = 1, ..., 7$ be the nilpotent Lie algebras admitting closed Laplacian solitons $\varphi_i$ found in \cite{Nic18}. The square of the torsion $2$-form $\tau_{\varphi_i}^2$ is divergence-free for $i = 1, 2, 3, 4, 6$ and not divergence-free for $i = 5, 7$. 
\end{proposition}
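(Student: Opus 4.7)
The plan is to prove Proposition 3.1 by direct computation using the divergence formula (2.10) from Lemma 2.5 applied to each $(\mathfrak n_i, \varphi_i)$, with the explicit matrices for $\tau_{\varphi_i}^2$ listed in Tables 1--3 and the structure equations recorded in the corresponding tables of \cite{Nic18}. The seven cases split naturally according to whether $\tau_{\varphi_i}^2$ is diagonal in the chosen orthonormal basis.

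First I would dispose of the easy ``diagonal'' cases. For $i=1$, $\mathfrak n_1$ is abelian, so every covariant derivative of a left-invariant field vanishes and both sums (2.10a) and (2.10b) are trivially zero. For $i=2,3$, direct inspection of Nicolini's structure equations shows that the bases used to record $\tau_{\varphi_i}^2$ are orthogonally nice orthonormal bases, and Tables 1--2 display $\tau_{\varphi_i}^2$ as diagonal; Proposition 2.8 then immediately gives $\Div\tau_{\varphi_i}^2=0$.

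For the remaining cases $i=4,5,6,7$ the matrix $\tau_{\varphi_i}^2$ has nonzero off-diagonal entries, so Proposition 2.8 does not apply and I would compute directly. Using Lemma 2.7(2) I would extract the quantities $g(\nabla_{e_j}e_k,e_\ell)$ from the (sparse) nonzero brackets of $\mathfrak n_i$; then for each index $j$ I would expand $\nabla_{e_j}(\tau_{\varphi_i}^2(e_j))$ by writing $\tau_{\varphi_i}^2(e_j)$ in the basis using the matrix entries from Tables 2--3, and similarly compute $\tau_{\varphi_i}^2(\nabla_{e_j}e_j)$. Summing these contributions according to (2.10), the expected outcome is that all terms cancel for $i=4,6$, proving divergence-freeness, while for $i=5,7$ a single basis vector $U=e_k$ can be exhibited with $(\Div\tau_{\varphi_i}^2)(e_k)\ne 0$, which is enough to conclude.

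The main obstacle is not conceptual but bookkeeping: tracking the seven-dimensional covariant derivatives of a non-diagonal $\tau_{\varphi_i}^2$ for $i=4,5,6,7$. Two simplifications keep this manageable. First, in Nicolini's bases the brackets for each $\mathfrak n_i$ are sparse, so only a small number of $\nabla_{e_j}e_k$ are nonzero and only the off-diagonal entries of $\tau_{\varphi_i}^2$ matching those indices can contribute. Second, the skew-symmetry $g(\nabla_{e_j}e_k,e_\ell)=-g(\nabla_{e_j}e_\ell,e_k)$ for orthonormal bases produces pairwise cancellations that explain the ``accidental'' vanishing in cases $i=4,6$ noted in Remark 2.9, and isolates the surviving terms in cases $i=5,7$ that witness non-vanishing.
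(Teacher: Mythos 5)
Your proposal is correct and follows essentially the same route as the paper: dispose of $i=1$ via triviality of the brackets, of $i=2,3$ via Proposition 2.8 (diagonal $\tau_{\varphi_i}^2$ in an orthogonally nice orthonormal basis), and compute the remaining cases term by term from formula (2.10) using the Koszul-derived connection coefficients, exhibiting the nonvanishing component (along $e_2$) for $i=5,7$. The only cosmetic difference is that the paper kills the sum (2.10b) once and for all by a general unimodularity/orthogonally-nice argument rather than case by case, and it does not attempt to attribute the cancellation in cases $i=4,6$ to the skew-symmetry $g(\nabla_{e_j}e_k,e_\ell)=-g(\nabla_{e_j}e_\ell,e_k)$ — that heuristic is unverified but harmless, since your method is to compute everything explicitly anyway.
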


\begin{proof} The torsion $2$-form $\tau_{\varphi_1} = 0$. More precisely, the exterior derivatives obtained from trivial brackets are all $0$, hence $\tau_{\varphi_1} = - *d*\varphi_1 = 0$ regardless of what $\varphi_1$ is. It follows that $\tau_{\varphi_1}^2 = 0$, hence its divergence is $0$.

The torsion $2$-forms $\tau_{\varphi_i}$ for all other cases can be obtained via $\tau_{\varphi_i} = -* d * \varphi_i$ (see \cite{Nic18}). We obtain $\tau_{\varphi_i}^2$ from the skew-symmetric matrix representation of $\tau_{\varphi_i}$ with respect to $(e^j)_j$. We claim that when $A = \tau_{\varphi_i}^2$, the sum (\ref{eq:2.10}b) $= 0$ for each $i = 1, ..., 7$.

\begin{proof}[Proof that sum (\ref{eq:2.10}b) = 0]
Unimodular Lie groups can be characterized by the property that there is a basis $(X_j)_j$ such that $\tr(\ad_X) = \sum_j g(\ad_X(X_j), X_j) = 0$ for any $X$. As nilpotent Lie groups are unimodular, it follows that $\ad_X$ is trace-free in all cases $\mathfrak n_i$. Moreover, the Lie brackets for $\mathfrak n_i$, $i = 1, ..., 7$, are all orthogonally nice. Thus either of these two conditions imply the sum $(2.11\text{b}) = \sum_j g(A(\nabla_{e_j}e_j), \cdot) = 0$ whenever $A$ is symmetric. To see this, first note by symmetry of $A$ we have $g(A(\nabla_{e_j}e_j), \cdot) = g(\nabla_{e_j}e_j, A(\cdot))$ as $A^* = A^t = A$ over $\mathbb R$. Then for any $U = \sum_k U^ke_k$,  \begin{align*}
    \sum_j g(\nabla_{e_j}e_j, A(U)) &= \sum_j g(\nabla_{e_j}e_j, A(\sum_kU^ke_k)) = \sum_j \sum_kU^kg(\nabla_{e_j}e_j, A(e_k))\\ &= \sum_k U^k\left(\sum_jg(\nabla_{e_j}e_j,A(e_k))\right) \\ &=\sum_k U^k\left(\sum_jg(\nabla_{e_j}e_j, \sum_\ell a_{\ell k}e_\ell^k)\right) \\ &= \sum_kU^k \left(\sum_\ell\sum_ja_{\ell k} g(\nabla_{e_j}e_j, e_\ell^k) \right) = \sum_kU^k\sum_\ell a_{\ell k}\tr(\ad_{e_\ell^k}),\end{align*} where the last expression is $0$ as $\tr(\ad_{e_\ell^k}) = 0$ $\forall$ $k, \ell$. On the other hand, whenever $(e_j)_j$ is orthogonally nice, $\nabla_{e_j}e_j = 0$ for all $j$ and so $\sum_j g(A(\nabla_{e_j}e_j), U) = \sum_j g(\nabla_{e_j}e_j, A(U)) = 0$. 
\end{proof}

It remains to compute the sum (\ref{eq:2.10}a) when $A = \tau_{\varphi_i}^2$ for $i = 2,..., 7$. Computing (\ref{eq:2.10}a) when $A = \tau_{\varphi_i}^2$ amounts to computing the terms $\nabla_{e_j}(\tau_{\varphi}^2(e_j))$. This depends on both the matrix representation of $\tau_{\varphi_i}^2$ with respect to the bases $(e_j)_j$ as well as the derivatives $\nabla_{e_j}e_k$.

Since $\tau_{\varphi_2}^2$, $\tau_{\varphi_3}^2$ are diagonal and the corresponding bases are orthogonally nice, by Proposition 2.8 both $\Div\tau_{\varphi_2}^2 = 0$ and $\Div \tau_{\varphi_3}^2 = 0$. We include computation of $\Div \tau_{\varphi_5}^2$. That $\Div \tau_{\varphi_i}^2 = 0$ for $i = 4, 6$ and $\Div \tau_{\varphi_7}^2(U, V) = -16\sqrt{6}g(e_2, U)$ follows from similar computations as for $\Div \tau_{\varphi_5}^2$.

\begin{remark}
The derivatives for each case $\mathfrak n_i$ are obtained from the Koszul formula and the structure equations as in the tables of \cite{Nic18}. [Lemma 3.10, \cite{Nic18}] states that for $\mathfrak n_5(a, b, c, d)$, where $a, b, c, d$ are the structure constants, $\varphi_5$ is closed if and only if $a = d$ and $b = c$. The lemma further states that if $a^2 = 2b^2$, then $(\mathfrak n_5(a, b, b, a), \varphi_5)$ is a semi-algebraic soliton, hence is a Laplacian soliton. We prove the result for $(b = 1, a = \sqrt{2})$ and note that it holds for general $(a, b)$ where $a^2 = 2b^2$ by scaling. We do the same for all other cases $\mathfrak n_i$.
\end{remark}

\begin{center} Table of derivatives for $\mathfrak{n_5}(\sqrt{2}, 1, 1,\sqrt{2})$ \end{center}

\begin{center}
\scalebox{0.8}{\begin{tabular}{|c|c|c|c|c|c|c|c|} \hline
$\nabla_{e_i}e_j$ &1 & 2& 3& 4& 5&6 & 7\\ \hline
1& 0& $-\dfrac{\sqrt{2}}{2}e_3$ & $\dfrac{\sqrt{2}}{2}e_2 - \dfrac{1}{2}e_6$ & $-\dfrac{1}{2}e_7$ & 0 & $\dfrac{1}{2}e_3$ & $\dfrac{1}{2}e_4$ \\ \hline
2& $\dfrac{\sqrt{2}}{2}e_3$ & 0 & $-\dfrac{\sqrt{2}}{2}e_1$ & 0 & $-\dfrac{\sqrt{2}}{2}e_7$ & 0 & $\dfrac{\sqrt{2}}{2}e_5$\\ \hline
3&$\dfrac{\sqrt{2}}{2}e_2 + \dfrac{1}{2}e_6$ & $-\dfrac{\sqrt{2}}{2}e_1$ & 0 & 0& 0& $-\dfrac{1}{2}e_1$ & 0\\ \hline
4& $\dfrac{1}{2}e_7$ & 0 & 0 & 0 & 0 & 0 &$-\dfrac{1}{2}e_1$ \\ \hline
5& 0 & $\dfrac{\sqrt{2}}{2}e_7$ & 0 & 0 & 0 & 0 & $-\dfrac{\sqrt{2}}{2}e_2$ \\ \hline
6& $\dfrac{1}{2}e_3$ & 0 & $-\dfrac{1}{2}e_1$ & 0 & 0 & 0 & 0 \\ \hline
7& $\dfrac{1}{2}e_4$ & $\dfrac{\sqrt{2}}{2}e_5$ & 0 & $-\dfrac{1}{2}e_1$ & $-\dfrac{\sqrt{2}}{2}e_2$ & 0 & 0 \\ \hline
\end{tabular}}
\end{center}

\noindent \textit{Case $(\mathfrak n_5, \varphi_5)$.} We compute each term of the sum (2.5a):

$\nabla_{e_1}(\tau_{\varphi_5}^2(e_1)) = \nabla_{e_1}(-2e_1 - \sqrt{2}e_3) = -\sqrt{2}(\dfrac{\sqrt{2}}{2}e_2 - \dfrac{1}{2}e_6) = -e_2 + \dfrac{\sqrt{2}}{2}e_6$;

$\nabla_{e_2}(\tau_{\varphi_5}^2(e_2)) = \nabla_{e_2}(0) = 0$;

$\nabla_{e_3}(\tau_{\varphi_5}^2(e_3)) = \nabla_{e_3}(-\sqrt{2}e_1 - 3e_3) = -\sqrt{2}(\dfrac{\sqrt{2}}{2}e_2 + \dfrac{1}{2}e_6) = - e_2 - \dfrac{\sqrt{2}}{2}e_6$;

$\nabla_{e_4}(\tau_{\varphi_5}^2(e_4)) = \nabla_{e_4}(-e_4)= 0$;

$\nabla_{e_5}(\tau_{\varphi_5}^2(e_5)) = \nabla_{e_5}(-2e_5 + \sqrt{2}e_7) = \sqrt{2}(-\dfrac{\sqrt{2}}{2}e_2) =  -e_2$;

$\nabla_{e_6}(\tau_{\varphi_5}^2(e_6)) = \nabla_{e_6}(-e_6) =  0$;

$\nabla_{e_7}(\tau_{\varphi_5}^2(e_7)) = \nabla_{e_7}(\sqrt{2}e_5 - 3e_7) =  \sqrt{2}(-\dfrac{\sqrt{2}}{2}e_2) = -e_2$. 

\noindent Thus $$\Div \tau_{\varphi_5}^2(U, V) = \sum_{i = 1}^7 g(\nabla_{e_i}(\tau_{\varphi_5}^2(e_i)), U) = g(-4e_2, U)= -4g(e_2, U),$$ which is nonzero whenever the $e_2$ component of $U$ is nonzero.
\end{proof}

We now prove Theorem 1.6.

\subsection{Divergence-free cases: $\Div \tau_{\varphi_i}^2 = 0$}

\begin{proof}[Proof of Theorem 1.6 Case $(\mathfrak n_1, \varphi_1)$] The Lie brackets $[\cdot, \cdot]$ with respect to orthonormal basis $(e_i)_{i =1 }^7$ for $\mathfrak n_1$ are trivial, hence the covariant derivatives $\nabla_{e_i}e_j$ are trivial. So for some closed $G_2$-structure $\varphi_1$, $\ric_{\varphi_1}$, $\tau_{\varphi_1}$, and $\scal_{\varphi_1}$ are $0$. Since $\ric_{\varphi_1} = 0$ and $N_1$ is homogeneous, it follows the space is flat. Suppose $(\varphi_1, \nabla f, \lambda_1)$ is a gradient Laplacian soliton. Since $\Div \tau_{\varphi_1}^2 = 0$, the Structure Theorem yields $N_1 = N \times \mathbb R^k$ where $f$ is constant on $N$. Note $\nabla f \in T_p\mathbb R^k \subseteq\ker(\ric_{\varphi_1}) = \Span(e_i)_{i = 1}^7 = T_p\mathbb R^7$, i.e., $\nabla f$ can be written as a linear combination of elements from $(e_i)_{i = 1}^7$ and $k \leq 7$. The gradient Laplacian soliton equation $$\hess f = -\dfrac{1}{3}\lambda_1 g$$ is diagonal with respect to basis $(e_i)_{i = 1}^7$ and so $\hess f$ must also be diagonal, i.e., $\nabla_i\nabla_j f = 0$ whenever $i \ne j$. Equating matrix entries, we get $\nabla_i\nabla_i f = -\dfrac{\lambda_1}{3}$ for each $i$ and so the potential function $f$ must be of the form \begin{align*}f(x, y, z, s, u, v, w) &= -\dfrac{\lambda_1}{6}(x^2 + y^2 +z^2 + s^2 + u^2 + v^2 + w^2) \\ &-( \alpha_1x + \alpha_2y + \alpha_3z + \alpha_4s + \alpha_5u + \alpha_6v + \alpha_7w) - \beta \end{align*} which is a Gaussian soliton; $(x,y,z,s,u,v,w)$ are coordinates with respect to $(e_i)_{i = 1}^7$.
\end{proof}

\begin{proof}[Proof of Theorem 1.6 Case $(\mathfrak n_2(1,1), \varphi_2)$.] By Proposition 3.1 $\Div\tau_{\varphi_2}^2 = 0$ and so by the Structure Theorem $N_2 = N \times \mathbb R^k$ where $f$ is constant on $N$. Note $\nabla f \in T_p\mathbb R^k \subset \ker\ric_{\varphi_2} = \Span\{e_4, e_7\}$ and so $k \leq 2$. In an appropriate basis $\mathcal B$, $\hess f\big|_N = 0$ and so the restriction of the gradient Laplacian soliton equation to $N$ with respect to $\mathcal B$ becomes $q_{\varphi_2}\big|_N = - \frac{1}{3}\lambda_2 g_N.$ But this means $-\frac{2}{3} = q_{\varphi_2}\big|_N(e_1, e_1) = q_{\varphi_2}\big|_N(e_6, e_6) = \frac{1}{3},$ a contradiction. Thus $(\varphi_2, X, \lambda_2)$ cannot be gradient Laplacian soliton. 
\end{proof}

\begin{proof}[Proof of Theorem 1.6 Case $(\mathfrak n_3(1, 1-c, c), \varphi_3)$] $(\varphi_3, X, \lambda_3)$ cannot be a gradient soliton by similar arguments as in case $(\mathfrak n_2, \varphi_2)$.
\end{proof}

\begin{proof}[Proof of Theorem 1.6 Case $(\mathfrak n_4(\sqrt{2},  1,\sqrt{2}, 1), \varphi_4)$] Suppose $(\varphi_4, \nabla f, \lambda_4)$ is a gradient Laplacian soliton. By Proposition 3.1 $\Div \tau_{\varphi_4}^2 = 0$. In the context of a $(-2q_{\varphi_4})$-flow, we get $-2q_\varphi$ is also divergence-free. Furthermore, $\tr (-2q_{\varphi_4})$ is constant as $N_4$ is homogeneous. We apply Corollary 2.11 to the $(-2q_{\varphi})$-flow to get the potential function $f$ satisfies $\ric_{\varphi_4}(\nabla f) = 0$. But $\ric_{\varphi_4}$ has trivial kernel and so $\nabla f = 0$. Thus $f$ is constant, a contradiction. Therefore $(\mathfrak n_4, X, \lambda_4)$ cannot be a gradient Laplacian soliton.
\end{proof}

\begin{proof}[Proof of Theorem 1.6 Case $(\mathfrak n_6(\sqrt{2},\sqrt{2}, 1,1), \varphi_6)$] By analogous arguments as in the proof of case $(\mathfrak n_4, \varphi_4)$, we get that $(\mathfrak n_6, X, \lambda_6)$ cannot be a gradient Laplacian soliton.
\end{proof}

\subsection{Non-divergence-free cases: $\Div \tau_{\varphi_i}^2 \ne 0$}

\begin{proof}[Proof of Theorem 1.6 Case $(\mathfrak n_5, \varphi_5)$]
Suppose $(\varphi_5,\nabla f, \lambda_5)$ is a gradient Laplacian soliton. Since $\Div\tau_{\varphi_5}^2 \ne 0$, $(N_5, \varphi_5)$ has either structure 2(a) or 2(b). As $\ric_{\varphi_5}$ has trivial kernel, $N_5$ cannot split as a product and so the structure must be as in 2(a).

Suppose $(N_5, \varphi_5)$ is a one-dimensional extension where $f = ar + b$. By the Key Lemma the potential function $f$ satisfies $$g(\ric(\nabla f), \cdot) =  -\dfrac{1}{2}\Div \tau_{\varphi_5}^2(\cdot) = 2g(e_2, \cdot),$$ where the last equality follows from $\Div \tau_{\varphi_5}^2 = -4g(e_2, \cdot)$ as computed in the proof of Proposition 3.1. Note $\ric_{\varphi_5}(\nabla f) = 2e_2$.  Since $\ric_{\varphi_5}$ is diagonal with respect to $(e_i)_i$ with nonzero diagonal entries, $\nabla f = c_2e_2$. Substituting $\ric_{\varphi_5}(\nabla f) = -2c_2e_2$ in the Key Lemma yields $c_2 = -1$, and so $\nabla f = -e_2$. Since $f = ar + b$, it follows that $e_2 = \pm \nabla r$.

Assume $\nabla r = e_2$. Applying the (1,1)-tensor version of the gradient Laplacian soliton equation (\ref{eq:2.5}) to $\nabla r = e_2$ and noting that $\hess f (\nabla r ) = a\hess r(\nabla r) = 0$, we get \begin{equation}\label{eq:3.1} 0 = - \ric_{\varphi_5}(e_2) - \dfrac{1}{2}\tau_{\varphi_5}^2(e_2) + \dfrac{1}{3}(\scal_{\varphi_5} - \lambda_5)I(e_2).\end{equation} Since $\tau_{\varphi_5}^2(e_2) = 0$, (\ref{eq:3.1}) becomes $$\ric_{\varphi_5}(e_2) = -\frac{1}{3}(\scal_{\varphi_5} - \lambda_5)I(e_2).$$  Substituting $\scal_{\varphi_5} = -3$ and $\lambda_5 = 9$ yields $$-2e_2 = \ric_{\varphi_5}(e_2) = -4I(e_2) = -4e_2,$$ from which it follows that $-2 = -4$, a contradiction. By similar arguments, we arrive at a contradiction when $\nabla r = -e_2$.
[Note: There cannot be two distinct contraction constants satisfying the soliton equation for if $(\varphi, X_1, \lambda_1)$ and $(\varphi, X_1, \lambda_2)$ both satisfy \ref{eq:1.2} and $\lambda_1 \ne \lambda_2$, then $L_Xg_\varphi = L_{X_2 - X_1}g_\varphi =  2(\lambda_2 - \lambda_1)g_\varphi = cg_\varphi$ for some nonzero constant $c \in \mathbb R$ and non-trivial vector field $X$, which would imply the space is flat.]
\end{proof}

\begin{proof}[Proof of Theorem 1.6 Case $(\mathfrak n_7, \varphi_7)$]
$(\varphi_7, X, \lambda_7)$ cannot be a gradient Laplacian soliton by analogous arguments as in case $(\mathfrak n_5, \varphi_5)$. 
\end{proof}

\begin{proof}[Some final remarks on the Proof of Theorem 1.6]
When $f$ is constant, the possible gradient Laplacian solitons are of the form $(\varphi, 0, \lambda)$. In these cases, $\hess f = 0$ and the gradient soliton equation of type $(1, 1)$ is equivalent to $Q_\varphi = -3^{-1}\Diag(\lambda,...,\lambda).$  None of the matrix expressions  $Q_{\varphi_i}$ for $i = 2, ..., 7$ satisfy this equality and thus such gradient Laplacian solitons cannot occur on $\mathfrak n_i$ for $i = 2, ..., 7$. For $i = 1$, $Q_\varphi = 0$ and so we must have $\lambda = 0$, i.e., the soliton is steady; $\varphi$ is torsion-free as $\mathfrak n_1$ has trivial structure. Thus the only non-trivial gradient solitons on $\mathfrak n_1$ are Gaussian as shown above. Lastly, the result is up to homothetic $G_2$-structures by Proposition 2.17.
\end{proof}

\begin{proposition} The closed $G_2$-structure $\varphi_{12}$ on $N_{12}$ as constructed in \cite{FFM16} is not gradient up to homothetic $G_2$-structures. 
\end{proposition}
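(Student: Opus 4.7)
The plan is to mirror the strategy used throughout the proof of Theorem 1.6, applied now to the specific closed $G_2$-structure $\varphi_{12}$ from \cite{FFM16}. First, extract from \cite{FFM16} the structure equations of $\mathfrak n_{12}$ and the explicit expression for $\varphi_{12}$, and fix the orthonormal coframe $(e^i)_{i=1}^7$ in which these are written. Check by inspection of the brackets that the dual basis $(e_i)_{i=1}^7$ is orthogonally nice; by Lemma 2.7(4) this yields $\nabla_{e_i}e_i = 0$ for every $i$, so the summand (\ref{eq:2.10}b) vanishes for any symmetric $A$. Nilpotency gives unimodularity, so (\ref{eq:2.10}b) also vanishes by the trace argument in the proof of Proposition 3.1. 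Compute $\tau_{\varphi_{12}} = -*_\varphi d *_\varphi \varphi_{12}$, its skew-symmetric operator form, the square $\tau_{\varphi_{12}}^2$, the Ricci operator $\ric_{\varphi_{12}}$, and assemble $Q_{\varphi_{12}}$ via (\ref{eq:2.7}). Then evaluate $\Div\tau_{\varphi_{12}}^2$ through formula (\ref{eq:2.10}a).

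With this tensorial data in hand, branch on whether $\Div\tau_{\varphi_{12}}^2 = 0$. If it vanishes, the Structure Theorem forces $N_{12} = N \times \mathbb R^k$ with $f$ constant on $N$, so $\nabla f \in \ker\ric_{\varphi_{12}}$. If the kernel of $\ric_{\varphi_{12}}$ is trivial, then $\nabla f = 0$ and we are reduced to the trivial case handled below; otherwise, restrict the gradient Laplacian soliton equation (\ref{eq:2.5}) to $N$ and derive a contradiction by comparing diagonal entries of $q_{\varphi_{12}}\big|_N$ and $-\tfrac{1}{3}\lambda g_{\varphi_{12}}\big|_N$, exactly as in the proof for $(\mathfrak n_2, \varphi_2)$ or by invoking Corollary 2.11 as in the $(\mathfrak n_4, \varphi_4)$ argument.

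If instead $\Div\tau_{\varphi_{12}}^2 \neq 0$, use the Key Lemma $g(\ric_{\varphi_{12}}(\nabla f), \cdot) = -\tfrac{1}{2}\Div\tau_{\varphi_{12}}^2(\cdot)$ to determine $\nabla f$ explicitly; if $\ric_{\varphi_{12}}$ is diagonal and the right-hand side is proportional to some $e_j$ on which $\ric_{\varphi_{12}}$ acts by a nonzero scalar, this pins $\nabla f$ to an explicit multiple of $e_j$. Nontrivial kernel of $\ric_{\varphi_{12}}$ would be required for case 2(b) of the Structure Theorem, so otherwise only 2(a) survives; then $\nabla r = \pm \nabla f / \|\nabla f\|$ and, since $\hess f(\nabla r) = a\,\hess r(\nabla r) = 0$, the $(1,1)$-form of (\ref{eq:2.5}) applied to $\nabla r$ reduces to $\ric_{\varphi_{12}}(e_j) = -\tfrac{1}{2}\tau_{\varphi_{12}}^2(e_j) + \tfrac{1}{3}(\scal_{\varphi_{12}} - \lambda)e_j$, and matching coefficients should produce a numerical contradiction, as in the $(\mathfrak n_5, \varphi_5)$ argument.

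Finally, the trivial case $\nabla f = 0$ must be ruled out: the soliton equation then collapses to $Q_{\varphi_{12}} = -\tfrac{1}{3}\lambda I$, so one verifies by inspection of the explicit matrix that $Q_{\varphi_{12}}$ is not a scalar multiple of the identity. The conclusion is then promoted to hold up to homothetic $G_2$-structures via Proposition 2.17. The main obstacle is purely computational, namely carrying out the explicit calculation of $\tau_{\varphi_{12}}^2$, $\ric_{\varphi_{12}}$, and $\Div\tau_{\varphi_{12}}^2$ from the data of \cite{FFM16}; once these matrices are in hand, the elimination follows one of the two prototypes already executed for the Nicolini examples, so the conceptual content is essentially fixed in advance.
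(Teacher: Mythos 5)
Your proposal is correct and follows essentially the same route as the paper: the actual computation shows $\tau_{\varphi_{12}}^2$ is diagonal in the orthogonally nice orthonormal basis (the paper deduces this in advance from $\mathfrak n_{12}$ being Laplacian flow diagonal via Proposition 2.8, then verifies it directly), so the divergence-free branch applies, and the contradiction is exactly the one you prescribe, namely $-1 = q_{\varphi_{12}}\big|_N(e_1,e_1) = q_{\varphi_{12}}\big|_N(e_3,e_3) = -4$ after restricting the soliton equation to $N$. The extra branches you include (non-divergence-free case, trivial kernel) are harmless but turn out not to be needed.
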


\begin{proof}
Let $(e_i)_i$ be the basis with structure equations \begin{align*}\mathfrak n_{12} = (&0, 0, 0, \frac{3}{6}e^{12}, \frac{1}{4}e^{23} + \frac{\sqrt{3}}{12}e^{13}, -\frac{\sqrt{3}}{12}e^{23} - \frac{1}{4}e^{13},  \\ &-\frac{\sqrt{3}}{6}e^{34} + \frac{\sqrt{3}}{12}e^{25} + \frac{1}{4}e^{26} + \frac{\sqrt{3}}{12}e^{16} - \frac{1}{4}e^{15}) \end{align*} and closed $G_2$ structure given by $$\varphi_{12} = -e^{124} + e^{135} + e^{167} - e^{236} + e^{257} + e^{347} - e^{456}$$ as in \cite{FFM16}. This basis and its corresponding structure equations are obtained from the canonical one for $\mathfrak n_{12}$ (see [Theorem 3.1, \cite{FFM16}] or [Table 1, \cite{Nic18}]). The structure constants and exterior derivatives are:

$$[e_1, e_2] = -\frac{\sqrt{3}}{6}e_4, [e_2,e_3] = \frac{1}{4}e_5, [e_1, e_3] = -\frac{\sqrt{3}}{12}e_5, [e_2, e_3] = \frac{\sqrt{3}}{12}e_6, $$

$$[e_1,e_3] = \frac{1}{4}e_6, [e_3, e_4] = \frac{\sqrt{3}}{6}e_7, [e_2, e_5] = -\frac{\sqrt{3}}{12}e_7, [e_2, e_6] = -\frac{1}{4}e_7,$$

$$[e_1, e_6] = -\frac{\sqrt{3}}{12}e_7, [e_1, e_5] = \frac{1}{4}e_7.$$

and $$de^1 = de^2 = de^3 = 0, \, \, \, \, \, de^4 = \frac{\sqrt{3}}{6}e^{12}, de^5 = -\frac{1}{4}e^{23} + \frac{\sqrt{3}}{12}e^{13}, de^6 = -\frac{\sqrt{3}}{12}e^{23} - \frac{1}{4}e^{13}$$ $$de^7 = -\frac{\sqrt{3}}{6}e^{34} + \frac{\sqrt{3}}{12}e^{25} + \frac{1}{4}e^{26} + \frac{\sqrt{3}}{12}e^{16} - \frac{1}{4}e^{15}.$$

As shown in \cite{FFM16}, the basis is orthonormal with respect to the associated metric $g_{\varphi_{12}}$ and the Ricci tensor is given by $$\ric_{\varphi_{12}} = \Diag(-\frac{1}{8}, -\frac{1}{8}, -\frac{1}{8}, 0, 0, 0, \frac{1}{8}) = -\frac{1}{4}I + \frac{1}{8}D,$$ where $D = \Diag(1, 1, 1, 2, 2, 2, 3)$, i.e., $g_{\varphi_{12}}$ is a nilsoliton. In Section 4 of \cite{FFM16}, it is shown that $\mathfrak n_{12}$ is Laplacian flow diagonal with respect to $(e_i)_i$ and at $t = 0$, $\varphi_{12}(0) = \varphi_{12}$. In other words $Q_{\varphi_{12}}(t)$ is diagonal along the Laplacian flow in the time interval stated in \cite{FFM16}. In particular, $Q_{\varphi_{12}}$ is diagonal with respect to $(e_i)_i$ at $t = 0$. Hence $\tau_{\varphi_{12}}^2$ is diagonal by Proposition 2.8. The basis $(e_i)_i$ is orthogonally nice. So if $(\varphi_{12}, \nabla f, \lambda_{12})$ is a gradient Laplacian soliton, then by Corollary 2.14 (1) $(N_{12}, \varphi_{12})$ must be a product metric $N \times \mathbb R^k$ where $f$ is constant on $N$. But since $\ker \ric_{\varphi_{12}} \ne \{0\}$, we cannot use Corollary 12.14 (2). 
We compute $\tau_{\varphi_{12}}^2$:

\begin{align*}
\varphi_{12} &= -e^{124} + e^{135} + e^{167} - e^{236} + e^{257} + e^{347} - e^{456} \\ *\varphi_{12} &= e^{3567} - e^{2467} + e^{2345} + e^{1457} + e^{1346} + e^{1256} + e^{1237}  \\  d*\varphi_{12} &= -\frac{1}{2}e^{12347} - \frac{1}{2}e^{12456} \\ *d*\varphi_{12} &= - \frac{1}{2}e^{56} + \frac{1}{2}e^{37} \\  \tau_{\varphi_{12}} &= -*d*\varphi_{12} = \frac{1}{2}e^{56} - \frac{1}{2}e^{37}\end{align*} Then $$\tau_{\varphi_{12}}^2 = \Diag\left(0, 0, -\frac{1}{4}, 0, -\frac{1}{4}, -\frac{1}{4}, -\frac{1}{4}\right).$$ Since $f$ is a function on $\mathbb R^k$, $\nabla f \in T_p\mathbb R^k$ and so $\ric_{\varphi_{12}}(\nabla f) = 0$, i.e., $\nabla f \in \ker(\ric_{\varphi_{12}}) = \Span\{e_4, e_5, e_6\}$. So $k \leq 3$. We obtain $$Q_{\varphi_{12}} = \frac{1}{24}\Diag(-1, -1, -4, 2, -1, -1, 1)$$ with respect to $(e_i)_i$. Since $f$ is constant on $N$, $\hess f \big|_N = 0$. The gradient Laplacian soliton equation becomes $q_{\varphi_{12}}\big|_N = -\frac{1}{3}\lambda_{12}g_{\varphi_{12}}\big|_N$. But this implies $-1 = q_{\varphi_{12}}\big|_N(e_1, e_1) = q_{\varphi_{12}}\big|_N(e_3, e_3) = -4$, a contradiction.  
\end{proof}

\subsection{Observations on products}

We collect some immediate observations from the soliton equation in the product case, i.e., the case when $\Div \tau^2 = 0$. Recall that for products, $(T_{(p, q)}(N^{7 - k} \times \mathbb R^k), g) = (T_p N^{7 - k} \oplus T_q\mathbb R^k, g = g_N + g_{\mathbb R^k})$.

\begin{proposition}
If $(\varphi, \nabla f, \lambda)$ is a homogeneous closed gradient Laplacian soliton with $\tau^2$ divergence-free, i.e., $M = N^{7 - k} \times \mathbb R^k$, and $f$ is a function only on $\mathbb R^k$, we get the following:
\begin{enumerate}
    \item $0 = (-\ric_{g_N} - \frac{1}{2}\tau^2 + \frac{1}{3}(\scal_\varphi - \lambda) g)(X_i, X_j)$ for any $X_i, X_j \in T_pN$.

    \item $\tau^2(X, Y) = 0$ for any $X \in T_pN$ and $Y \in T_q\mathbb R^k$.

    \item $\hess f(Y_i, Y_j) = - \frac{1}{2}\tau^2(Y_i, Y_j) + \frac{1}{3}(\scal_\varphi - \lambda)g(Y_i, Y_j)$ for any $Y_i, Y_j \in T_q\mathbb R^k$. If in addition $\tau^2$ is a multiple of the metric, then $f$ is a Gaussian.

    \item $2^{-1}D_X\|\nabla f\|^2 = cg(\nabla f, X)$ where $c = 3^{-1}(\scal_\varphi - \lambda)$ is constant. Hence $f$ is an isoperimetric function as $\|\nabla f\|^2 = \phi(f).$ 
\end{enumerate}
\end{proposition}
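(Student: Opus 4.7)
The plan is to rewrite the gradient Laplacian soliton equation (\ref{eq:2.5}) using the expression (\ref{eq:2.7}) for $Q_\varphi$ in the form
\[
\hess f \;=\; -\ric_\varphi \;-\; \tfrac{1}{2}\tau_\varphi^2 \;+\; \tfrac{1}{3}(\scal_\varphi - \lambda)\,g_\varphi,
\]
and then to evaluate this identity against three types of tangent vectors suggested by the product splitting $T_{(p,q)}M = T_pN \oplus T_q\mathbb R^k$: both slots in $T_pN$, one in each factor, and both slots in $T_q\mathbb R^k$. The key auxiliary facts I will use are that on a Riemannian product $(i)$ the Ricci tensor decomposes as $\ric_g = \ric_{g_N} + \ric_{g_{\mathbb R^k}}$ with $\ric_{g_{\mathbb R^k}} = 0$ and no cross terms, and $(ii)$ if $f$ is pulled back from $\mathbb R^k$ then $\nabla f$ is horizontal and $\nabla_X \nabla f = 0$ for every $X \in T_pN$, so $\hess f$ vanishes as soon as one slot lies in $T_pN$.

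With these facts in hand the three bulleted identities fall out directly. For (1), evaluating the rewritten soliton equation on $X_i,X_j \in T_pN$ kills $\hess f$ and replaces $\ric_\varphi$ by $\ric_{g_N}$. For (2), a mixed pair $X \in T_pN$, $Y \in T_q\mathbb R^k$ annihilates the Hessian, Ricci and metric terms, leaving $-\tfrac{1}{2}\tau^2(X,Y) = 0$. For (3), pairs $Y_i,Y_j \in T_q\mathbb R^k$ see vanishing Ricci curvature, producing the stated formula. If moreover $\tau^2 = \mu g$ for a (necessarily constant, by homogeneity) scalar $\mu$, then $\hess f$ restricted to $\mathbb R^k$ is a constant multiple of the flat metric, so integrating twice in Euclidean coordinates shows $f$ is quadratic plus linear plus a constant, i.e.\ a Gaussian.

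For (4), the formula $\tfrac{1}{2}D_X\|\nabla f\|^2 = \tfrac{1}{3}(\scal_\varphi - \lambda)\,g(\nabla f, X)$ is exactly Corollary 2.19, which applies since $\tau_\varphi^2$ is divergence-free by hypothesis. Setting $c = \tfrac{1}{3}(\scal_\varphi - \lambda)$, which is constant on a homogeneous space, we have $D_X(\|\nabla f\|^2 - 2cf) = 2c\,g(\nabla f,X) - 2c\,D_X f = 0$ for all $X$, so $\|\nabla f\|^2 - 2cf$ is locally constant and $\|\nabla f\|^2 = \phi(f)$ for $\phi(t) = 2ct + C$, which is the definition of an isoperimetric function.

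None of the steps requires a substantive calculation and there is no real obstacle; the only point deserving explicit mention is the vanishing of $\hess f$ on $T_pN$-directions, which relies on the product-connection identity $\nabla_X\nabla f = 0$ for $X \in T_pN$ whenever $f$ is lifted from the Euclidean factor. Everything else is immediate from the product splitting applied slot-by-slot to the soliton equation together with Corollary 2.19.
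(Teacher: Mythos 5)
Your proposal is correct and follows exactly the route the paper intends: the paper states this proposition without a written proof, presenting it as ``immediate observations from the soliton equation in the product case,'' and your slot-by-slot evaluation of $\hess f = -\ric_\varphi - \tfrac{1}{2}\tau_\varphi^2 + \tfrac{1}{3}(\scal_\varphi - \lambda)g_\varphi$ against the splitting $T_pN \oplus T_q\mathbb R^k$, together with the citation of Corollary 2.19 for part (4), is precisely the argument being left to the reader. The one detail worth making explicit, the vanishing of $\hess f$ whenever a slot lies in $T_pN$, is correctly justified by your product-connection observation.
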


We make some further observations when $f$ is a Gaussian.

\begin{corollary}
Suppose $(\varphi, \nabla f, \lambda)$ is a homogeneous closed gradient Laplacian soliton with $\tau^2$ divergence-free, i.e., $M = N^{7 - k} \times \mathbb R^k$, and $f$ is a Gaussian. Then 
\begin{enumerate}
    \item $\hess f(Y_i, Y_j) = cg(Y_i, Y_j)$ where constant $c = 3^{-1}(\scal_\varphi - \lambda)$ for any $Y_i, Y_j \in T_q\mathbb R^k$.
    \item $\tau^2(Y_i, Y_j) = 0$ for any $Y_i, Y_j \in T_q\mathbb R^k$.
    \item $\tau^2(X_i, X_j) = (-2\ric_{g_N} + 2cg)(X_i, X_j)$ for any $X_i, X_j \in T_pN$ and $$\tau^2 = \begin{bmatrix}
    -2\ric_{g_N} + 2cI_{7 - k} & \\ & 0_{k \times k}\end{bmatrix}$$ with respect to basis $(X_1, ... , X_{7 - k}, Y_1, ... , Y_k)$.
    \item $$\lambda = -\left(\frac{2 + k}{7 - k}\right)\scal_{g_N} =-\left(\frac{2 + k}{7 - k}\right)\scal_\varphi.$$
\end{enumerate} Hence the gradient soliton is either steady or expanding; $N^{7-k}$ must have constant nonpositive scalar curvature; and if $\varphi$ is closed non-torsion-free, then $N^{7 - k}$ must have constant negative scalar curvature. It also follows from (3) and (4) that $\tau^2$ is determined by $\dim N$ and $g_N$. 
\end{corollary}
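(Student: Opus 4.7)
The plan is to extract (1)--(4) as direct consequences of Proposition~3.5 together with the Gaussian hypothesis, using Lemma~2.18~(4) as the key trick to upgrade ``$\tau^2$ is a multiple of $g$ on $\mathbb{R}^k$'' to ``$\tau^2$ vanishes on $\mathbb{R}^k$''. Once (1) and (2) are in hand, (3) follows by rearranging Proposition~3.5~(1), and (4) follows by tracing $\tau_\varphi^2$ in two different ways.

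First I would record the three pieces of Proposition~3.5: on $T_pN$, $-\ric_{g_N} - \frac{1}{2}\tau^2 + \frac{1}{3}(\scal_\varphi - \lambda)g = 0$; the mixed block vanishes, i.e.\ $\tau^2(X,Y) = 0$ for $X \in T_pN$, $Y \in T_q\mathbb{R}^k$; and on $T_q\mathbb{R}^k$, $\hess f = -\frac{1}{2}\tau^2 + \frac{1}{3}(\scal_\varphi - \lambda)g$. Interpreting ``Gaussian'' as $\hess f|_{\mathbb{R}^k} = \tilde c\, g|_{\mathbb{R}^k}$ for a constant $\tilde c$ (consistent with the explicit expression for $f$ in the $\mathfrak{n}_1$ case treated in Section~3), the third piece forces $\tau^2|_{\mathbb{R}^k} = \alpha\, g|_{\mathbb{R}^k}$ for some constant $\alpha$. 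By Lemma~2.18~(4), $\nabla f \lrcorner \tau_\varphi = 0$, so $\tau_\varphi(\nabla f) = 0$, and then, exactly as in the proof of Corollary~2.19, $\tau_\varphi^2(\nabla f, \nabla f) = -g(\tau_\varphi(\nabla f), \tau_\varphi(\nabla f)) = 0$. Since $f$ is non-constant, $\nabla f$ is nonzero at some point of $\mathbb{R}^k$, so the equality $\alpha\, g(\nabla f, \nabla f) = 0$ forces $\alpha = 0$. This proves (2), and feeding this back into Proposition~3.5~(3) yields (1) with $c = \frac{1}{3}(\scal_\varphi - \lambda)$.

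Statement (3) then comes from rearranging the $T_pN$ equation as $\tau^2|_N = -2\ric_{g_N} + 2c\, g|_N$ and combining with (2) and the mixed-block vanishing. For (4) I would compute $\tr \tau_\varphi^2$ in two different ways. Using $\scal_\varphi = -\frac{1}{2}|\tau_\varphi|^2$ and $|\tau_\varphi|^2 = -\frac{1}{2}\tr \tau_\varphi^2$ from Proposition~2.2 of \cite{Lau17a}, we get $\tr \tau_\varphi^2 = 4\scal_\varphi = 4\scal_{g_N}$, since $\mathbb{R}^k$ is flat and scalar curvature adds on a Riemannian product. On the other hand, the block form from (3) gives $\tr \tau_\varphi^2 = \tr(\tau^2|_N) = -2\scal_{g_N} + 2c(7-k)$. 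Equating the two, substituting $c = \frac{1}{3}(\scal_{g_N} - \lambda)$, and solving yields $\lambda = -\frac{k+2}{7-k}\scal_{g_N}$.

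The concluding remarks follow at once: $\scal_\varphi \leq 0$ by Lemma~2.18~(1), so the formula for $\lambda$ forces $\lambda \geq 0$ (hence steady or expanding), and in the non-torsion-free case $\scal_\varphi < 0$ gives $\lambda > 0$; constancy of $\scal_{g_N}$ is a standard consequence of homogeneity; and the formula in (3), with $c$ expressed purely in terms of $\scal_{g_N}$ and $\dim N$, shows that $\tau^2$ is determined by $g_N$ and $\dim N$. The only delicate step I anticipate is pinning down the precise meaning of ``Gaussian''---specifically that $\hess f$ must be a constant multiple of $g$ on $\mathbb{R}^k$---without which Proposition~3.5~(3) does not directly give the proportionality of $\tau^2|_{\mathbb{R}^k}$ to $g|_{\mathbb{R}^k}$ that makes the Lemma~2.18~(4) argument effective.
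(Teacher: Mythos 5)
Your proposal is correct and follows essentially the same route as the paper: both hinge on combining Proposition~3.5 with $\nabla f \lrcorner \tau_\varphi = 0$ from Lemma~2.18~(4) to kill the $\tau^2$ term on the Euclidean factor, and both obtain (4) by equating $\tr\tau_\varphi^2 = 4\scal_\varphi$ with the trace of the block form. The only cosmetic difference is that you derive (2) before pinning down $c$, whereas the paper first evaluates the $\mathbb{R}^k$-equation on $(\nabla f, \nabla f)$ to get $c = \tfrac{1}{3}(\scal_\varphi - \lambda)$ and then deduces (2); your reading of ``Gaussian'' as $\hess f|_{\mathbb{R}^k} = \tilde c\, g|_{\mathbb{R}^k}$ is exactly the paper's.
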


\begin{proof} Since $f$ is a Gaussian on the Euclidean factor only, we have $\hess f(Y_i, Y_j) = cg(Y_i, Y_j)$ for some constant $c$ and $Y_i, Y_j \in T_q\mathbb R^k$. Setting $Y_i = Y_j = \nabla f$ and noting that $\nabla f \lrcorner \tau = 0$, by (3) of the preceding proposition, we get $c\|\nabla f\|^2 = 3^{-1}(\scal_\varphi - \lambda)\|\nabla f\|^2$. It follows that $c = 3^{-1}(\scal_\varphi - \lambda)$ since $\|\nabla f\| > 0$ as $f$ is assumed to be non-constant. Using (3) again yields $\tau^2(Y_i, Y_j) = 0$ for any $Y_i, Y_j \in T_q\mathbb R^k$. Furthermore, substituting $c$ in (1) of the preceding proposition, we get $$\tau^2(X_i, X_j) = (-2\ric_{g_N} + 2cg)(X_i, X_j) \, \, \, \, \,  \forall \, \, X_i, X_j \in T_pN.$$ Thus $\tau^2$ has the matrix representation as in (3) with respect to the basis $(X_1, ... , X_{7 - k}, Y_1, ... , Y_k)$.

Taking the trace yields $$\tr \tau^2 = -2\scal_{g_N} + \frac{2}{3}(7-k)(\scal_\varphi + \lambda).$$ Recall $-\frac{1}{2}\tr \tau^2 = -2\scal_\varphi$ and so $\tr \tau^2 = 4\scal_\varphi$. Putting this together with $\scal_\varphi = \scal_{g_N} + \scal_{g_{\mathbb R^k}} = \scal_{g_N}$ gives $$4\scal_{g_N} = -2\scal_{g_N} + \frac{2}{3}(7 - k)(\scal_{g_N} - \lambda),$$ from which we get $\lambda = -\left(\frac{2 + k}{7 - k}\right)\scal_{g_N} =-\left(\frac{2 + k}{7 - k}\right)\scal_\varphi.$ Since $\scal_\varphi \leq 0$ for closed $G_2$-structures, it follows that $\lambda \geq 0$, i.e., the soliton is either steady or expanding. From the expression for $\lambda$, the fact that $\scal_\varphi \leq 0$ also shows $\scal_N \leq 0$.
\end{proof}

\begin{remark}
The $0_{k \times k}$ block of $\tau^2$ in Corollary 3.5 (c) may be nonzero when $f$ is not Gaussian.
\end{remark}

The question arises of whether it is possible for $\tau^2$ to be a constant multiple of the metric. If so, then it would follow from Proposition 3.5 that $f$ is a Gaussian on $\mathbb R^k$ and that $g_N$ is an Einstein metric. But if $\tau^2 = cg$, for some nonzero $c \in \mathbb R$, then $\tau^2(\nabla f, \nabla f) = c\|\nabla f\|^2$. Since $\tau(\nabla f, \nabla f) = 0$ by Lemma 2.18 (4), it would follow that $\nabla f = 0$, a contradiction as we are considering non-trivial gradient solitons. Thus $\tau^2$ cannot be a constant multiple of the metric.

An open question remains whether there are any homogeneous closed gradient Laplacian solitons on products other than the Gaussian. If such non-trivial examples do exist, it would be desirable to obtain a classification of homogeneous closed gradient solitons on products. A more fundamental question arises of whether there  are homogeneous closed $G_2$-structures on product metrics $N^{7 -k} \times \mathbb R^k$. [There are known examples outside the homogeneous setting (see \cite{HN21, HKP22}).] We investigate this question for our choice of model (fundamental) $3$-form $\varphi$ from the introduction. The main observation is that to find closed $G_2$-structures on product metrics $N \times \mathbb R^k$, one should consider $\dim N \geq 4$.\\

\noindent \textit{Case $N^1 \times \mathbb R^6$}: We assume $(e_i)_{i = 1}^7$ is an basis such that $\{e_1\}$ is the basis for $T_pN^1$ and such that the $3$-form is the model form $\varphi =  e^{127} + e^{347} + e^{567} + e^{135} - e^{146} - e^{236} - e^{245}$ with respect $(e_i)_i$. Note that on product $N^1 \times \mathbb R^6$, the structure is given by $de^i = 0$ for all $i$. It is easy to see that $d\varphi = 0$ and so $N^1 = S^1$ or $\mathbb R$, i.e., the space is flat.\\

\noindent \textit{Case $N^2 \times \mathbb R^5$}: Let $(e_i)_{i = 1}^7$ be a basis for $T_{(p, q)}(N^2 \times \mathbb R^5)$ where $\{e_1, e_2\}$ and $\{e_3, e_4, e_5, e_6, e_7\}$ are bases for $T_pN^2$ and $T_q\mathbb R^5$, respectively. We have $\{e^{12}\}$ is a basis for $\Lambda^2(T_p^*N^2)$ and that the structure is given by $$de^1 = ae^{12}, \, \, \, \, \, de^2 = be^{12}, \, \, a, b \in \mathbb R  \, \, \, \, \, \text{and} \, \, \, \, \, de^i = 0 \, \, \forall \, \, i \ne 1, 2.$$ Suppose the model $3$-form $\varphi$ is with respect to the basis $(e_i)_i$. Then $d\varphi = ae^{1234} - ae^{1247} - be^{1236} - be^{1245} = 0$ if and only if $a = b = 0$. Thus in order for $\varphi$ to be closed in this basis, the space must be flat.\\

\noindent \textit{Case $N^3 \times \mathbb R^4$}: Let $(e_i)_{i = 1}^7$ be a basis for $T_{(p, q)}(N^3 \times \mathbb R^k)$ where $\{e_1, e_2, e_3\}$ and $\{e_4, e_5, e_6, e_7\}$ are bases for $T_pN^3$ and $T_q\mathbb R^4$, respectively. We have $\{e^{12}, e^{13}, e^{23}\}$ is a basis for $\Lambda^3(T_p^*N^3)$ and that the structure is given by $$\begin{cases}de^1 = a_{11}e^{12} + a_{12}e^{13} + a_{13}e^{23} \\ de^2 = a_{21}e^{12} + a_{22}e^{13} + a_{23}e^{23} \\ de^3 = a_{31}e^{12} + a_{32}e^{13} + a_{33}e^{23} \\ de^i = 0 \,\,\,\,\, \forall \,\, i \ne 1, 2, 3.\end{cases}$$ By straightforward computations, $d\varphi = 0$ if and only if $a_{ij} = 0$ for all $i, j = 1, 2, 3$. We obtain again that in order for $\varphi$ to be closed, the space must be flat.\\

\noindent \textit{Case $N^4 \times \mathbb R^3$}: By similar setup and computations as in the preceding cases, we get that $d\varphi = 0$ yields an undetermined system of $12$ equations in $24$ unknowns. We do not know if $N^4 \times \mathbb R^3$ can admit closed $G_2$-structures.\\

\noindent \textit{Case $N^{5} \times \mathbb R^2$}: 
A non-trivial example of a homogeneous product of the form $N^5 \times \mathbb R^2$ admitting a closed $G_2$-structure is the space $K^7 = H(1, 2) \times \mathbb R^2$ constructed in \cite{Fer87} where $$H(1, 2) = \left\{\begin{bmatrix} I_2 & X & Z \\
0 & 1 & y \\ 0 & 0 & 1\end{bmatrix} \Bigg| X = (x_1, x_2)^t, Z = (z_1, z_2)^t, x_i, z_j, y \in \mathbb R \right\}$$ is the generalized Heisenberg group. It is known that $K^7$ is a connected nilpotent Lie group. We show there is a Lie algebra isomorphism taking the dual basis $(f_j)_j$ to the basis $(e_i)_i$ for $(\mathfrak n_2, \varphi_2)$ in \cite{Nic18}. We label the left-invariant $1$-forms on $K^7$: $$f^1 = dx_1, f^1 = dx_2, f^3 = dy, f^4 = dz_1 - x_1dy, f^5 = dz_2 - x_2dy, f^6 = du_1, f^7 = du_2.$$ The structure on $K^7$ is $$df^4 = -f^{13}, \, \, \, \, \, df^5 = -f^{23}, \, \, \, \, \,\text{and} \, \,  df^j = 0 \, \, \forall \, \, j \ne 4, 5,$$ or equivalently, $[f_1, f_3] = f_4, [f_2, f_3] = f_5,$ and $[f_s, f_t] = 0$ for all other $s, t$. The metric is given by $\sum_j(f^j)^2$. Let $(e_i)_i$ be the basis for $(\mathfrak n_2, \varphi_2)$ which has structure $[e_1, e_2] = -e_5$ and $[e_1, e_3] = -e_6$. Then the Lie algebra isomorphism $h: (\mathfrak n_2, \varphi_2) \rightarrow (K^7, \varphi_{K^7})$ taking $$e_1 \mapsto f_3, e_2 \mapsto f_1, e_3 \mapsto f_2, e_4 \mapsto f_7, e_5 \mapsto f_4, e_6 \mapsto f_5, e_7 \mapsto f_6$$ satisfies $h \cdot \varphi_2 = \varphi_{K^7}$ where $\varphi_{K^7} = -f^{147} + f^{257} + f^{156} + f^{246} + f^{345} + f^{123} - f^{367}$ is the closed $G_2$ structure on $K^7$. We do not know whether $K^7$ admits gradient Laplacian solitons.\\

\noindent \textit{Case $N^6 \times \mathbb R$}: This is a special case of the construction of one-dimensional extensions discussed in the next section (see Remark 4.6 in Section~\ref{sec:4}).

\section{Gradient Laplacian Solitons on Almost Abelian Solvmanifolds}\label{sec:4}

A $G$-homogeneous space $(M = G/G_x, g)$ is a \textit{one-dimensional extension} if there is a closed subgroup $H \subset G$ contaning $G_x$ such that there is a surjective Lie group homomorphism $G \rightarrow (\mathbb R, +)$ with kernel $H$. In this section, we study one-dimensional extensions admitting closed $G_2$-structures with the main goal of proving Theorem 1.8. We first recall the setup for one-dimensional extensions in more detail.

\subsection{Setup for one-dimensional extensions}

Let $H$ be a Lie group and $(M = H/K, g)$ an $H$-homogeneous space. Let $\mathfrak h$ and $ \mathfrak k$ be the Lie algebras of $H$ and $K$, respectively. The family of automorphisms $\{\Phi_t\}_t \subset \Aut(H)$ such that $\Phi_t(K) = K$ induces a well defined family of diffeomorphisms $\{\phi_t\}_t \subset \Diff(H/K)$ given by $$\phi_t(hK) = \Phi_t(h)K \, \, \, \, \, \forall\, \, h \in H.$$ We fix an $\Ad(K)$-invariant decomposition $\mathfrak h = \mathfrak p \oplus \mathfrak k$. We can identify $\mathfrak p \equiv T_xM$ via the orthogonal projection $\mathfrak h \rightarrow \mathfrak p$.

Now suppose $H$ is a Lie group with $(N = H/K, h)$ a $H$-homogeneous space. Fix a derivation $D \in \text{Der}(\mathfrak h)$ that preserves $K$, an isotropy subgroup at some point $x \in N$. To obtain a one-dimensional extension of $(N, h)$, we consider the Lie aglebra $$\mathfrak g = \mathfrak h \oplus_D \mathbb R\xi$$ with Lie bracket given by $$\ad_\xi(X) = D(X) \, \, \, \, \, \text{and} \, \, \, \, \, \ad_Y(X) = \ad_Y^{\mathfrak h}(X) \, \, \, \, \,   \forall \, \, X, Y \in \mathfrak h.$$ Let $G$ be the simply-connected Lie group with Lie algebra $\mathfrak g$. Then
\begin{enumerate}[(i)]
    \item $G \supset H$, a codimension one normal subgroup of $G$ as $\ad_\xi(X) \in \mathfrak h$ for all $X \in \mathfrak h$;
    \item $G = H \ltimes \mathbb R$;
\item any $\Ad(K)$-invariant decomposition $\mathfrak h = \mathfrak p \oplus \mathfrak k$ yields a corresponding $\Ad(K)$-invariant decomposition $\mathfrak g = \mathfrak q \oplus \mathfrak k = (\mathfrak p \oplus \mathbb R\xi) \oplus \mathfrak k$;
\item $G$-invariant metrics are identified with restrictions of $\Ad(K)$-invariant inner products on $\mathfrak g$ to $\mathfrak q$. 
\end{enumerate}  The $G$-homoegeneous space $(M = G/K, g)$ where the metric satisfies $g\big|_{\mathfrak p} = h$, $g(\xi, X) = 0$ for all $X \in \mathfrak p$, and $g(\xi, \xi) = 1$ is the one-dimensional extension of $(N, h)$. The one-dimensional extensions obtained in this way are equivalent to the ones described at beginning of this section.

Connections between Ricci solitons and Einstein metrics on such homogeneous spaces have been studied by He-Petersen-Wylie in \cite{HPW15}. We will need [Lemma 2.9, \cite{HPW15}].

\begin{lemma}[Lemma 2.9, \cite{HPW15}] The Ricci tensor of one-dimensional extensions $(M, g)$ with Lie algebra of the form $\mathfrak g = \mathfrak h \oplus_D \mathbb R \xi$ is given by 

\begin{enumerate}
    \item $\ric(\xi, \xi) = -\tr(S^2)$
    \item $\ric(X, \xi) = - \Div (S)$
    \item $\ric(X, X) = \ric^N(X, X) - (\tr S)h(S(X), X) - h([S, A](X), X)$,
\end{enumerate} where $S = (D + D^t)/2$ and $A = (D - D^t)/2$, the symmetric and skew-symmetric parts of $D$, respectively. 

\end{lemma}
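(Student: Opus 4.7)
The plan is to derive the Levi-Civita connection on the one-dimensional extension $(M, g)$ explicitly via the Koszul formula, and then extract each Ricci component as a trace of the Riemann curvature. Using the brackets $[\xi, X] = D(X)$ for $X \in \mathfrak h$ together with the intrinsic brackets on $\mathfrak h$, the orthogonality $g(\xi, \mathfrak h) = 0$ and $g(\xi, \xi) = 1$, and $g|_{\mathfrak h} = h$, a direct application of Koszul should yield
\begin{equation*}
\nabla_\xi \xi = 0, \qquad \nabla_\xi Y = A(Y), \qquad \nabla_Y \xi = -S(Y), \qquad \nabla_Y Z = \nabla_Y^N Z + h(S(Y), Z)\,\xi
\end{equation*}
for $Y, Z \in \mathfrak h$. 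Geometrically $-S$ plays the role of the Weingarten map of the $H$-orbits viewed as hypersurfaces in $M$, and $A$ measures the failure of $\xi$-parallel transport to be tangential-isometric.

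Next I would fix an orthonormal basis $\{e_i\}$ of $\mathfrak p \cong T_x N$ and compute each Ricci component from $R(U, V)W = \nabla_U \nabla_V W - \nabla_V \nabla_U W - \nabla_{[U, V]} W$. The connection formulas give $R(e_i, \xi)\xi = AS(e_i) - SD(e_i)$ essentially by inspection, so
\begin{equation*}
\ric(\xi, \xi) = \tr(AS) - \tr(SD) = -\tr(S^2),
\end{equation*}
using $\tr(SA) = 0$ (symmetric times skew). For the mixed Ricci $\ric(X, \xi)$, expanding $R(e_i, X)\xi$ with the mixed formulas produces $\xi$-valued terms of the form $\pm h(Se_i, SX)\xi$ that cancel pairwise, together with $\mathfrak h$-valued terms $-\nabla^N_{e_i}(SX) + \nabla^N_X(Se_i) + S([e_i, X])$; tracing these against $e_i$ and invoking the symmetry $S = S^t$ collapses the expression to $-(\Div S)(X)$.

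For $\ric(X, X)$ I would split
\begin{equation*}
\ric(X, X) = \sum_i g(R(e_i, X) X, e_i) + g(R(\xi, X) X, \xi).
\end{equation*}
The first sum separates into the intrinsic $\ric^N(X, X)$ plus additional contributions coming from the $h(S(Y), Z)\xi$ piece of $\nabla_Y Z$. The second term, via $R(\xi, X) X = \nabla_\xi \nabla_X X - \nabla_X \nabla_\xi X - \nabla_{[\xi, X]} X$ together with the substitutions $\nabla_\xi X = A X$ and $\nabla_X \xi = -S X$, produces precisely $-(\tr S) h(S(X), X) - h([S, A] X, X)$ after assembling like terms.

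The main obstacle will be the bookkeeping in the $\ric(X, X)$ case: isolating the intrinsic $\ric^N(X, X)$ from the $\xi$-induced corrections, and verifying that the residual terms from differentiating $S$ and $A$ along $\xi$ and $X$ directions assemble into the commutator $[S, A] = SA - AS$. Each individual step is elementary, but systematic use of $S = S^t$, $A = -A^t$, and $\tr(SA) = 0$ is required to cancel cross terms and arrive at the compact form stated in the lemma.
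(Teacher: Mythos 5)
Your outline is correct, and it is worth noting that the paper itself gives no proof of this lemma at all: it is imported verbatim as [Lemma 2.9] of He--Petersen--Wylie \cite{HPW15}, so your direct verification is genuinely supplying an argument the paper omits. The route you take is the standard one, and the key formulas check out: the Koszul formula with $[\xi,X]=D(X)$, $g(\xi,\mathfrak h)=0$, $g(\xi,\xi)=1$ does give $\nabla_\xi\xi=0$, $\nabla_\xi Y=A(Y)$, $\nabla_Y\xi=-S(Y)$, $\nabla_YZ=\nabla^N_YZ+h(S(Y),Z)\xi$; then $R(e_i,\xi)\xi=AS(e_i)-SD(e_i)$ traces to $\tr(AS)-\tr(SD)=-\tr(S^2)$ since $\tr(SA)=0$ and $\tr(SD)=\tr(S^2)$; the $\xi$-components $\mp h(Se_i,SX)\xi$ in $R(e_i,X)\xi$ do cancel, and the remaining trace collapses to $-(\Div S)(X)$ using metric compatibility and $S=S^t$; and in $\ric(X,X)$ the tangential sum contributes $\ric^N(X,X)-(\tr S)h(SX,X)+h(S^2X,X)$ while $g(R(\xi,X)X,\xi)=-h(S^2X,X)-h([S,A]X,X)$, so the $h(S^2X,X)$ terms cancel and the stated formula follows. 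The one caveat is that your computation is carried out at the Lie-group level (left-invariant frame, trivial isotropy), whereas the lemma in \cite{HPW15} is stated for homogeneous one-dimensional extensions $G/K$; this gap is harmless for the present paper, since every application in Section~4 is to the Lie groups $G_D$ with $\mathfrak h$ a codimension-one ideal, but a fully general proof would have to phrase the connection formulas on the reductive complement $\mathfrak q=\mathfrak p\oplus\mathbb R\xi$.
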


Let $\mathfrak g = \mathfrak h \oplus_D \mathbb Re_7$ be the Lie algebra of Lie group $M$. It is known that if $\mathfrak h$ has an $\SU(3)$-structure, then there is an orthonormal basis $\{e_1, ... , e_6\}$ for $\mathfrak h$ such that the $\SU(3)$-structure is characterized by the pair of forms $(\omega, \rho^+) \in \Lambda^2\mathfrak h^* \times \Lambda^3\mathfrak h^*$ where $$\omega = e^{12} + e^{34} + e^{56} \, \, \, \, \, \text{and} \, \, \, \, \, \rho^+ = e^{135} - e^{146} - e^{236} - e^{245}.$$ Then $\varphi = \omega \wedge e^7 + \rho^+ = e^{127} + e^{347} + e^{567} + e^{135} - e^{146} - e^{236} - e^{245}$ is a $G_2$-structure on $M$. We note that $\rho^- = -e^{246} + e^{235} + e^{145} + e^{136}$ is the complex part of the complex volume $(3, 0)$-form $\Psi = \rho^+ + i\rho^-$ from the $\SU(3)$-structure $(g, J, \Psi)$ on $\mathfrak h$. If both forms $(\omega, \rho^+)$ are closed, we say that the $\SU(3)$-structure is \textit{symplectic half-flat}. It is clear that if $\mathfrak h$ is symplectic half-flat, then the $G_2$-structure $\varphi = \omega \wedge e^7 + \rho^+$ is closed. Manero showed in \cite{Man15} that $\mathfrak h \subset \mathfrak g$ admitting symplectic half-flat $\SU(3)$-structure is equivalent to $\varphi = \omega \wedge e^7 + \rho^+$ being closed  whenever $D$ is the real representation of some $A \in \mathfrak {sl}(3, \mathbb C)$. [Manero uses the classification of symplectic half-flat $\SU(3)$-structures on solvable Lie algebra $\mathfrak h$ to construct new examples of closed $G_2$-structures in \cite{Man15}].

If in addition $\mathfrak h$ is an abelian ideal, we call $\mathfrak g$ \textit{almost abelian} and $M$ an \textit{almost abelian solvmanifold}. The Lie algebras for almost abelian solvmanifolds are completely determined by derivation $D: \mathfrak h \rightarrow \mathfrak h$ defined by $$D(e_i) = \ad_{e_7}\big|_{\mathfrak h}(e_i) = [e_7, e_i \big|_{\mathfrak h}].$$ [Note: $D$ coincides with $A$ in \cite{Lau17a}.] For almost abelian solvmanifolds, $\varphi = \omega \wedge e^7 + \rho^+$ is closed if and only if the derivation $D$ is the real representation of some element $A \in \mathfrak {sl}(3, \mathbb C)$ (see \cites{Fre13, Lau17a}).

\noindent \textit{Notation:} We write $(G_D, g)$ to denote the Lie group with Lie algebra $\mathfrak g = \mathfrak h \oplus_D \mathbb R\xi$ and $\mu_D = [\cdot, \cdot]_D$ to denote the Lie bracket of $\mathfrak g$ obtained from $D$. We write $\ric_D$ and $\scal_D$ for the Ricci and scalar curvatures from the metric $g$, respectively, where $g$ is the extension of the metric $h$ on $N$. We also write $\tau_D$ and $Q_D$ for the torsion form and unique symmetric operator from \ref{eq:2.4} corresponding to $(G_D, \varphi)$ when $\varphi$ is closed.

We now collect some facts regarding $d_{\mathfrak g}:\Lambda^\ell \mathfrak g^* \rightarrow \Lambda^{\ell + 1} \mathfrak g^*$ and $d_{\mathfrak h}: \Lambda^k \mathfrak h^* \rightarrow \Lambda^{k + 1} \mathfrak h^*$, the exterior derivatives (\textit{Chevalley-Eilenberg differentials}) on $\mathfrak g$ and $\mathfrak h$, respectively, which will be used in later computations. These facts are included or deduced from [Lemma 5.12, \cite{Lau17a}]. It is known that if $\omega$ is an invariant $k$-form on a Lie group, then $\omega(X_1, ... , X_k)$ is a constant. In particular, if $\gamma$ is a $1$-form, then by [Proposition 2.19, \cite{Lee13}] $$d\gamma(X, Y) = -\gamma([X, Y]).$$ Let $(e_i)_i$ be a basis of left-invariant vector fields and $(e^i)_i$ be its co-basis of left-invariant $1$-forms. Since $e^i$ is an invariant $1$-form, $$de^i(e_j ,e_k) = -e^i([e_j, e_k]).$$ Then the structure equations are $$[e_j, e_k] = c_{jk}^\ell e_\ell,$$ where $c_{jk}^\ell$ are the structure constants. It follows that $$de^i = -c_{jk}^ie^{jk}.$$ Also recall the map $\theta: \mathfrak {gl}(\mathfrak h)\rightarrow \text{End}(\Lambda^k \mathfrak h^*)$ is the representation obtained as the derivative of the natural $\GL(\mathfrak h)$-action $h \cdot \gamma = (h^{-1})^*\gamma$: $$\theta(D)\gamma  = -\gamma(D \cdot, ..., \cdot) - \cdots - \gamma(\cdot, ..., D\cdot) \, \, \, \, \, \forall \, \, \gamma \in \Lambda^k\mathfrak h^*.$$

\begin{lemma}
    Given Lie algebra $\mathfrak g = \mathfrak h \oplus_D \mathbb R e_7$, where derivation $D: \mathfrak h \rightarrow \mathfrak h$ defined by $D(X) = [e_7, X]$ for all $X \in \mathfrak h$ determines the structure equations, the following holds:

    \begin{enumerate}[(i)]
        \item $d_{\mathfrak g}e^7 = 0$.
    
        \item $d_{\mathfrak g}\gamma = d_{\mathfrak h}\gamma + (-1)^k(\theta(D)(\gamma))\wedge e^7$ for any $\gamma \in \Lambda^k \mathfrak h^*$.

        \item $d_{\mathfrak g}(\gamma \wedge e^7) = d_{\mathfrak h} \gamma \wedge e^7$ for any $\gamma \in \Lambda^k \mathfrak h^*$. 

        \item For $\varphi = \omega \wedge e^7 + \rho^+$, $$d_{\mathfrak g} \varphi = d_{\mathfrak h}\omega \wedge e^7 + (d_{\mathfrak h}\rho^+ -(\theta(D)\rho^+) \wedge e^7).$$ Thus $\varphi$ is closed if and only if $$d_{\mathfrak h}\omega = 0, d_{\mathfrak h} \rho^+ = 0, \, \, \text{and} \, \, \theta(D)\rho^+ = 0.$$

        \item $\theta(D)\rho^+ = 0$ if and only if $\theta(D)\rho^- = 0$, if and only if $D \in \mathfrak {sl}(3, \mathbb C)$.

        \item If $\tr(D) = 0$, then $\theta(D)*_{\mathfrak h} = -*_{\mathfrak h} \theta(D^t)$ on $\Lambda \mathfrak h^*$.
    \end{enumerate}
\end{lemma}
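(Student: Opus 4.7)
The plan is to prove the six parts in the order stated, since (iii) and (iv) rely on (i) and (ii), while (v) and (vi) are independent algebraic facts. The only substantive computation is part (ii); everything else follows by combining it with the graded Leibniz rule, with the known $\SU(3)$-theoretic characterization of $\mathfrak{sl}(3,\mathbb{C})$, and with a direct identity for the Hodge star.

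For (i), apply the Chevalley--Eilenberg formula $d_{\mathfrak g}e^7(X,Y) = -e^7([X,Y]_{\mathfrak g})$. Since $\mathfrak h$ is an ideal of $\mathfrak g$ and $[e_7, X]_{\mathfrak g} = D(X) \in \mathfrak h$ for $X \in \mathfrak h$, no bracket in $\mathfrak g$ produces an $e_7$-component, so $d_{\mathfrak g}e^7 = 0$. For (ii), I expand $d_{\mathfrak g}\gamma(X_0,\dots,X_k)$ by Chevalley--Eilenberg in two separate cases. When all arguments lie in $\mathfrak h$, every bracket stays in $\mathfrak h$, and we recover $d_{\mathfrak h}\gamma$ on the same tuple. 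When exactly one argument equals $e_7$, the terms in which $e_7$ appears as a bracket argument vanish because $\gamma \in \Lambda^k \mathfrak h^*$ vanishes on tuples containing $e_7$; only the $[e_7, X_j] = D(X_j)$ terms survive, and rearranging indices produces $\theta(D)\gamma(X_1,\dots,X_k)$. Matching this against $(-1)^k\bigl((\theta(D)\gamma)\wedge e^7\bigr)(e_7, X_1,\dots,X_k)$, which equals $(-1)^k \cdot (-1)^k \theta(D)\gamma(X_1,\dots,X_k)$ after shuffling $e_7$ past $k$ slots, verifies the formula. The main obstacle here is sign bookkeeping; everything else is mechanical.

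For (iii), apply the graded Leibniz rule to $\gamma \wedge e^7$ together with (i), then substitute (ii) and use $e^7 \wedge e^7 = 0$ to kill the $(\theta(D)\gamma)\wedge e^7 \wedge e^7$ term. Part (iv) is then immediate: writing $d_{\mathfrak g}\varphi = d_{\mathfrak g}(\omega \wedge e^7) + d_{\mathfrak g}\rho^+$, apply (iii) to the first summand and (ii) to the second (with $k=3$, so $(-1)^k = -1$). The resulting three contributions $d_{\mathfrak h}\omega \wedge e^7$, $d_{\mathfrak h}\rho^+$, and $(\theta(D)\rho^+)\wedge e^7$ lie in linearly independent subspaces of $\Lambda^4\mathfrak g^*$ (distinguished by degree in $\mathfrak h^*$ versus the $e^7$-factor), so closedness forces each to vanish independently.

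Part (v) uses that $\Psi = \rho^+ + i\rho^-$ is the complex volume $(3,0)$-form on $\mathfrak h \cong \mathbb{C}^3$. For any $A \in \mathfrak{gl}(3,\mathbb{C})$ viewed as a real endomorphism of $\mathfrak h$ via its real representation, one has $\theta(A)\Psi = -\operatorname{tr}_{\mathbb{C}}(A)\,\Psi$, so $\theta(D)\Psi = 0$ is equivalent to $D \in \mathfrak{sl}(3,\mathbb{C})$; separating real and imaginary parts yields the three-way equivalence. For (vi), apply $\theta(D)$ as a graded derivation to the defining relation $\alpha \wedge *_{\mathfrak h}\beta = \langle \alpha, \beta\rangle \operatorname{vol}_{\mathfrak h}$ for $\alpha, \beta \in \Lambda^k \mathfrak h^*$. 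Using $\theta(D)\operatorname{vol}_{\mathfrak h} = -\operatorname{tr}(D)\operatorname{vol}_{\mathfrak h}$ and the adjoint identity $\langle \theta(D)\alpha, \beta\rangle = \langle \alpha, \theta(D^t)\beta\rangle$, the result is the general relation $\theta(D)*_{\mathfrak h} + *_{\mathfrak h}\theta(D^t) = -\operatorname{tr}(D)\,*_{\mathfrak h}$, which specializes to the stated identity when $\operatorname{tr}(D) = 0$.
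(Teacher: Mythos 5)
The genuine gap is in the last step of (iv). You assert that the three contributions $d_{\mathfrak h}\omega\wedge e^7$, $d_{\mathfrak h}\rho^+$, and $(\theta(D)\rho^+)\wedge e^7$ lie in linearly independent subspaces of $\Lambda^4\mathfrak g^*$ distinguished by the presence of the $e^7$-factor. That is false for the first and third terms: both $d_{\mathfrak h}\omega$ and $\theta(D)\rho^+$ are elements of $\Lambda^3\mathfrak h^*$, so $d_{\mathfrak h}\omega\wedge e^7$ and $(\theta(D)\rho^+)\wedge e^7$ sit in the \emph{same} summand $\Lambda^3\mathfrak h^*\wedge e^7$ of the decomposition $\Lambda^4\mathfrak g^* = \Lambda^4\mathfrak h^*\oplus\bigl(\Lambda^3\mathfrak h^*\wedge e^7\bigr)$. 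Your argument therefore only yields $d_{\mathfrak g}\varphi = 0$ if and only if $d_{\mathfrak h}\rho^+ = 0$ and $d_{\mathfrak h}\omega = \theta(D)\rho^+$; it does not by itself force the three conditions to hold separately. To close this you need an additional input, e.g.\ the standing hypothesis of Section 4 that $D$ is the real representation of some $A\in\mathfrak{sl}(3,\mathbb C)$, which by (v) kills $\theta(D)\rho^+$ outright, or else a type-decomposition argument separating the components of $d_{\mathfrak h}\omega$ from those of $\theta(D)\rho^+$. As written, the ``only if'' direction of the closedness equivalence in (iv) is not established.

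Apart from this, your route through (i)--(iii) is the same as the paper's (Chevalley--Eilenberg formula for (i)--(ii), graded Leibniz plus $e^7\wedge e^7 = 0$ for (iii)), and your sign bookkeeping in (ii) is correct. For (v) and (vi) the paper simply cites Lauret's Lemma 5.12, whereas you supply proofs: your derivation of (vi) from $\alpha\wedge *_{\mathfrak h}\beta = \left<\alpha,\beta\right>\vol_{\mathfrak h}$, using that $\theta(D)$ is a derivation with adjoint $\theta(D^t)$ and $\theta(D)\vol_{\mathfrak h} = -\tr(D)\vol_{\mathfrak h}$, is complete and is a nice self-contained alternative to the citation. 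Your (v), however, only proves one direction: the identity $\theta(A)\Psi = -\tr_{\mathbb C}(A)\Psi$ presupposes that $D$ is complex linear, so it shows $D\in\mathfrak{sl}(3,\mathbb C)$ implies $\theta(D)\rho^{\pm}=0$, but not the converse, for which one must also show that the $J$-anticommuting part of $D$ is detected by $\theta(\cdot)\rho^+$ (its image lands in the $(2,1)+(1,2)$ component, complementary to where the complex-linear part acts).
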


\begin{proof}
Statement (i) follows from the fact that $[e_i, e_j] \in \mathfrak h$ for all $i, j$; (ii) follows from [Proposition 2.19, \cite{Lee13}] and the fact that $\mathfrak h$ is not assumed to be abelian, hence the term $d_{\mathfrak h}\gamma$ appears; (iii) follows from (ii); (iv) follows from (i)-(iii). Statements (v) and (vi) are statements of Lemma 5.12 (iv) and (v) in \cite{Lau17a}.  
\end{proof}

\begin{remark} Note the second term in (ii) is $d_A\gamma$ in [Lemma 5.12 (ii), \cite{Lau17a}].
\end{remark}

\subsection{Proof of Theorem 1.8}

\begin{theorem}
If $(\varphi, \nabla f, \lambda)$ is a closed non-torsion-free gradient Laplacian soliton on Lie group $G_D$ with Lie algebra $\mathfrak g = \mathfrak h \oplus_D \mathbb Re_7$ and $\mathfrak h$ is a codimension-one abelian ideal, then it must be a product $N \times \mathbb R^k$ and $f$ is constant on $N$.
\end{theorem}

\begin{proof}[Proof Outline]
Suppose $G_D$ admits a gradient Laplacian soliton $(\varphi, \nabla f, \lambda)$. Then by the Structure Theorem, either it is a one-dimensional extension or it is product. If $G_D$ is a one-dimensional extension, then the potential function is either of the form $f = ar + b$ or $f(x,y) = ar(x) + v(y)$ and either $\nabla r = \pm e_7$ or $\nabla r \ne \pm e_7$. If $\nabla r = \pm e_7$, then by Proposition 4.9, the space is flat, contradicting $\varphi$ being closed non-torsion-free. If $\nabla r \ne \pm e_7$, then by Proposition 4.11, the space is also flat, contradicting $\varphi$ being closed non-torsion-free. Thus by the Structure Theorem, $G_D$ must be a product $N \times \mathbb R^k$ with $f$ constant on $N$.
\end{proof}

The rest of this section consists of observations culminating in the propositions used to prove Theorem 1.8. We fix some notation for the statements to follow. We consider only Lie algebras $\mathfrak g = \mathfrak h \oplus_D \mathbb Re_7$ where the derivation $D: \mathfrak h \rightarrow \mathfrak h$ given by $D(X) = [e_7, X]$ for all $X \in \mathfrak h$ is the real representation of some $A \in \mathfrak {sl}(3,\mathbb C)$. Let $S$ be the symmetrization of $D$, i.e., $S = (D + D^t)/2$. The hypothesis that the simply-connected Lie group $(G_D, \varphi)$ is a closed $G_2$-structure in the following statements can be replaced by $(\mathfrak h, \omega, \rho^+)$ being a symplectic half-flat $\SU(3)$-structure by the result of Manero. We first obtain general matrix formulas for the operators in \ref{eq:2.4} in the case of one-dimensional extensions. These matrix formulas generalize matrix formulas in the almost abelian case found in \cite{Lau17a} to the not almost abelian case.

\begin{proposition}
 Suppose $G_D$ has Lie algebra of the form $\mathfrak g = \mathfrak h \oplus_D \mathbb R e_7$ and admits closed $G_2$-structure $\varphi$. Then with respect to an orthornomal basis $(e_i)_{i = 1}^7$ where $\mathfrak h = \Span\{e_1, ... , e_6\}$, we have the following:
\begin{enumerate}
    \item $\tau_D^2 = \begin{bmatrix} -(D + D^t)^2 + J(D + D^t)B + BJ(D + D^t) + B^2 & \\ & 0\end{bmatrix}$ where $B = *_{\mathfrak h}d_{\mathfrak h} \rho^-$.
    \item The matrix representation for $\ric_D$ is $$\ric_D = \begin{bmatrix} \ric^H & \\ & 0\end{bmatrix}  + \begin{bmatrix} \frac{1}{2}[D, D^t] & \\ & -\frac{1}{4}\tr((D + D^t)^2) \end{bmatrix} - P,$$ where $P = \begin{bmatrix} 0 & \cdots & 0 & \Div(S)(e_1) \\ \vdots & \ddots & \vdots & \vdots\\ 0 & \cdots & 0 & \Div(S)(e_6) \\ \Div (S)(e_1) & \cdots & \Div (S)(e_6) & 0 \end{bmatrix}$.

    \item $\scal_D = \scal^H - \tr(S^2)$.

    \item $Q_D = \begin{bmatrix}Q_H & \\ & q_{e_7} \end{bmatrix} - P$ where  

\begin{align*}
    Q_H = &\ric_H + \frac{1}{2}[D, D^t] -\frac{1}{4}\tr(D + D^t)(D + D^t) \\&+\frac{1}{2}[-(D + D^t)^2 + J(D + D^t)B + BJ(D + D^t) + B^2] \\&-\frac{1}{3}[\scal_H - (\tr S)^2  -\tr(S^2)]I_{6 \times 6},
\end{align*} and $q_{e_7}  = -\frac{1}{3}\scal_H + \frac{1}{3}(\tr S)^2 -\frac{2}{3}\tr(S^2).$
\end{enumerate}
\end{proposition}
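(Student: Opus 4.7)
The plan is to establish the four parts in a cascading order: parts (3) and (4) follow mechanically from (1) and (2), so the real content lies in computing $\tau_D^2$ and $\ric_D$. Specifically, (3) comes from tracing (2), using that $\tr[D, D^t] = 0$ and that the matrix $P$ has zero trace, and (4) is immediate by substituting (1), (2), (3) into the defining equation $Q_\varphi = \ric_\varphi - \tfrac{1}{12}\tr(\tau_\varphi^2) I + \tfrac{1}{2}\tau_\varphi^2$.

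For (2), I would apply Lemma 4.1 (He-Petersen-Wylie) directly to the extension $\mathfrak g = \mathfrak h \oplus_D \mathbb R e_7$. The hypothesis that $D$ is the real representation of some $A \in \mathfrak{sl}(3, \mathbb C)$ forces $\tr D = 0$, hence $\tr S = 0$, killing the $(\tr S)\,h(S(X), X)$ term in Lemma 4.1~(3). A direct calculation yields $[S, A] = -\tfrac{1}{2}[D, D^t]$, which converts the remaining term into $\tfrac{1}{2}h([D, D^t]X, X)$. The $(e_7, e_7)$ entry $-\tr(S^2) = -\tfrac{1}{4}\tr((D + D^t)^2)$ and the off-diagonal $(X, e_7)$ entries $-\Div(S)(X)$ come straight from Lemma 4.1 (1) and (2), assembling into the claimed block decomposition.

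For (1), I plan to compute $\tau_D = -*_{\mathfrak g} d_{\mathfrak g} *_{\mathfrak g}\varphi$ in four steps. First, the standard SU(3) identity gives $*_{\mathfrak g}\varphi = \tfrac{1}{2}\omega^2 - \rho^- \wedge e^7$. Next, Lemma 4.2 simplifies the exterior derivative: since $\varphi$ is closed, $d_{\mathfrak h}\omega = 0$, so $d_{\mathfrak g}\omega = \theta(D)\omega \wedge e^7$; and since $D \in \mathfrak{sl}(3, \mathbb C)$, Lemma 4.2~(v) gives $\theta(D)\rho^- = 0$, hence $d_{\mathfrak g}\rho^- = d_{\mathfrak h}\rho^-$. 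Combining these yields $d_{\mathfrak g}*_{\mathfrak g}\varphi = [\omega \wedge \theta(D)\omega - d_{\mathfrak h}\rho^-] \wedge e^7$. Then applying $*_{\mathfrak g}$ and using the relation $*_{\mathfrak g}(\alpha \wedge e^7) = *_{\mathfrak h}\alpha$ for $\alpha \in \Lambda^k \mathfrak h^*$ produces $\tau_D = *_{\mathfrak h} d_{\mathfrak h}\rho^- - *_{\mathfrak h}(\omega \wedge \theta(D)\omega)$, a 2-form supported on $\mathfrak h$ (so $\tau_D(e_7) = 0$ as an endomorphism). By definition $*_{\mathfrak h} d_{\mathfrak h}\rho^- = B$, and the key SU(3) computation should identify $-*_{\mathfrak h}(\omega \wedge \theta(D)\omega)$ with $J(D + D^t)$ on $\mathfrak h$, yielding $\tau_D|_{\mathfrak h} = J(D + D^t) + B$. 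Since $D$ is the real representation of a complex matrix, both $D$ and $D^t$ commute with $J$, so $(J(D + D^t))^2 = J^2(D + D^t)^2 = -(D + D^t)^2$, and squaring produces the claimed expression; the $(e_7, e_7)$ block of $\tau_D^2$ vanishes because $\tau_D(e_7) = 0$.

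The main obstacle will be the identification $-*_{\mathfrak h}(\omega \wedge \theta(D)\omega) = J(D + D^t)$. Unpacking $(\theta(D)\omega)(X, Y) = -g(JDX, Y) - g(JX, DY)$ and invoking appropriate SU(3)-structure identities relating $\omega \wedge (\cdot)$ to the Hodge star on $\mathfrak h$ should produce this identification, but careful tracking of orientation and sign conventions will be essential. Once that identification is in hand, the remainder of (1) is routine algebra using $[D, J] = 0$, and the remaining parts (3) and (4) are immediate bookkeeping.
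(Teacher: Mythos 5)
Your proposal is correct and follows essentially the same route as the paper: compute $*\varphi = \tfrac{1}{2}\omega\wedge\omega \pm \rho^-\wedge e^7$, differentiate via Lemma 4.2, apply $*$ again to get $\tau_D\big|_{\mathfrak h} = \pm(J(D+D^t)+B)$ supported on $\mathfrak h$, and square (the overall sign ambiguity you worry about is immaterial since only $\tau_D^2$ enters), with parts (2)--(4) handled identically via Lemma 4.1, tracing, and equation (2.4). The one step you defer as the ``main obstacle'' --- identifying $-*_{\mathfrak h}(\omega\wedge\theta(D)\omega)$ with $\pm J(D+D^t)$ --- is exactly where the paper invokes Lemma 4.2(vi), namely $\theta(D)*_{\mathfrak h} = -*_{\mathfrak h}\theta(D^t)$ for traceless $D$, which reduces it to the direct matrix computation $\theta(D^t)\omega = -J(D+D^t)$ using $[D,J]=[D^t,J]=0$.
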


\begin{proof}
Recall there exists an orthonormal basis $(e_i)_{i = 1}^7$ such that $\mathfrak h = \Span (e_i)_{i = 1}^6$ and $\varphi = \omega \wedge e^7 + \rho^+$. Since the structure is determined by $D$, we write $\tau_{\mathfrak g} = \tau_D$. We compute the intrinsic torsion $\tau_{\mathfrak g} = - * d_{\mathfrak g} *\varphi$ for closed $G_2$-structure $\varphi$ using linear algebra properties of $*$ on $\Lambda^k \mathfrak g^*$ and $*_{\mathfrak h}$ on $\Lambda^k \mathfrak h^*$ from Lemma 5.11 of \cite{Lau17a}. Taking the Hodge star of $\varphi$ we get $$ *\varphi= *(\omega \wedge e^7) + *\rho^+ = (-1)^2*_{\mathfrak h}\omega + *_{\mathfrak h}\rho^+ \wedge e^7 \\ = \dfrac{1}{2}\omega \wedge \omega + \rho^- \wedge e^7,$$ where the first equality follows from Lemma 5.11 (i) and (ii), while the second equality follows from Lemma 5.11 (iii) and (iv).  Taking the differential yields
    $$d_{\mathfrak g}(*\varphi) = d_{\mathfrak g}(\dfrac{1}{2}\omega \wedge \omega) + d_{\mathfrak g}(\rho^- \wedge e^7) = \dfrac{1}{2}d_{\mathfrak g}(\omega \wedge \omega) + d_{\mathfrak h}\rho^- \wedge e^7$$ as $\rho^- \in \Lambda^3 \mathfrak h^*$. The first term in the last expression is \begin{align*}
    \frac{1}{2}d_{\mathfrak g}(\omega \wedge \omega) &= \frac{1}{2}[d_{\mathfrak h}(\omega \wedge \omega) + (-1)^{4 + 1}(\theta(D)(\omega \wedge \omega)) \wedge e^7] \\ &= \frac{1}{2}[(d_{\mathfrak h}\omega \wedge \omega + (-1)^2\omega \wedge d_{\mathfrak h}\omega) - (\theta(D)(\omega \wedge \omega)) \wedge e^7] \\ &= -\frac{1}{2}(\theta(D)(\omega \wedge \omega))\wedge e^7 \\ &= -  \theta(D)(\dfrac{1}{2}\omega \wedge \omega) \wedge e^7 \\ &= -\theta(D)*_{\mathfrak h} \omega \wedge e^7 \\ &= *_{\mathfrak h}\theta(D^t)\omega \wedge e^7,
\end{align*} where we used $d_{\mathfrak h}\omega = 0$ in the third equality, Lemma 5.11 (iii) in the fifth, and Lemma 5.12 (vi) in the last as $\tr D = 0$. If $d_{\mathfrak h}\rho^- \wedge e^7 \ne 0$, we get $$d*\varphi =  -*_{\mathfrak h}\theta(D^t)\omega \wedge e^7 + d_{\mathfrak h}\rho^- \wedge e^7.$$ Taking the Hodge star again gives

\begin{align*}*d_{\mathfrak g}*\varphi &= *(-*_{\mathfrak h}\theta(D^t)\omega \wedge e^7 + d_{\mathfrak h}\rho^- \wedge e^7) \\ &= (-1)^4*_{\mathfrak h}(-*_{\mathfrak h} \theta(D^t)\omega) + (-1)^4*_{\mathfrak h}d_{\mathfrak h}\rho^- \\ &= -*_{\mathfrak h}^2\theta(D^t)\omega  + *_{\mathfrak h}d_{\mathfrak h}\rho^- \\ &= -(-1)^2\theta(D^t)\omega + *_{\mathfrak h}d_{\mathfrak h}\rho^-\\ &= -\theta(D^t)\omega + *_{\mathfrak h}d_{\mathfrak h}\rho^-.\end{align*} where we used Lemma 5.11 (ii) for the second equality and Lemma 5.11 (v) for the second to last equality. Then $$\tau_D = - *d_{\mathfrak g}* \varphi = \theta(D^t)\omega - *_{\mathfrak h}d_{\mathfrak h}\rho^-.$$ Note, $\rho^- \in \Lambda^3 \mathfrak h^*$ implies $d_{\mathfrak h}\rho^- \in \Lambda^4 \mathfrak h^*$. Taking the Hodge star yields $*_{\mathfrak h} d_{\mathfrak h} \rho^- \in \Lambda^2 \mathfrak h^*$, i.e., $*_{\mathfrak h} d_{\mathfrak h} \rho^-$ is a $2$-form on $\mathfrak h^*$ and thus can be written as a matrix with respect to $2$-forms $(e^{ij})_{i, j}$; $i, j = 1, ..., 6$. Set $B:= *_{\mathfrak h} d_{\mathfrak h} \rho^-$. Then the matrix representation of the torsion $2$-form is $$\tau_D = \begin{bmatrix} -J(D + D^t)& \\ & 0\end{bmatrix} - \begin{bmatrix} B &  \\ & 0\end{bmatrix}.$$ Taking the square of $\tau_D$ yields the matrix representation of $\tau_D^2$.

We use Lemma 4.1 to obtain $\ric_D$. Let $g_H$ denote the metric on $H$ corresponding to $\mathfrak h$. By Lemma 4.1 (1) with $\xi = e_7$ and $\alpha = 1$, we have $$\ric_D(e_7, e_7) = -\tr(S^2).$$ Also, Lemma 4.1 (2) and symmetry of $\ric_D$ yields

$$\ric_D(e_7, e_i) = \ric_D(e_i, e_7) = -\Div(S)(e_i) \, \, \, \, \, \forall \, \,  i = 1, ... , 6.$$ Note $-[S, A] = [A,S] = \frac{1}{2}[D, D^t].$ Then for $i, j = 1, ..., 6$, Lemma 4.1 (3) gives

\begin{align*}\ric_D(e_i, e_j) = &\ric^H(e_i, e_j) \\ &- \frac{1}{4}\tr(D + D^t)g_H((D + D^t)(e_i), e_j) + g_H(\frac{1}{2}[D, D^t](e_i), e_j).\end{align*} Putting these observations together and using the fact that $\tr D = \tr D^t = 0$ yields the matrix representation of $\ric_D$. 

The expression for $\scal_D$ follows from taking the trace of $\ric_D$ and the fact that $\tr[D, D^t] = 0$. The matrix formula for $Q_D$ follows from [Proposition 2.2, \cite{Lau17a}] and the preceding results.
\end{proof}

\begin{remark}
    A one-dimensional extension is a product metric if and only if the derivation $D$ is anti-symmetric. By the discussion preceding Proposition 4.5, a product metric $N^6 \times \mathbb R$ has a closed $G_2$-structure if $N^6$ admits an anti-symmetric derivation and a symplectic half-flat $\SU(3)$-structure. We do not know of any examples of symplectic half-flat $\SU(3)$ structures that admit an anti-symmetric derivation. Moreover, in order for such a metric to be a closed gradient Laplacian soliton, by Proposition 4.5, we see that $\ric^N - \frac{1}{2}B^2$ must be a constant multiple of the metric $g_N$.
\end{remark}

\subsection{Case: $\nabla r = e_7$}

\begin{proposition}
 Suppose $G_D$ has Lie algebra of the form $\mathfrak g = \mathfrak h \oplus_D \mathbb R e_7$ and admits closed $G_2$-structure $\varphi$. If $$(\varphi = \omega \wedge e^7 + \rho^+, \nabla f, \lambda)$$ is a closed gradient Laplacian soliton where $f(r) = ar + b$ and $\nabla r= e_7$, then 
\begin{enumerate}
    \item $\Div (S)(X) = 0 \, \, \, \, \, \forall \, \, X \in \mathfrak h.$ 
    \item $\Div(S)(\nabla r) = \Div(S)(e_7) = \tr(S^2).$
    \item $\lambda = \scal^H + 2\tr(S^2)$.
    \item $\Delta f = - 2\tr(JSB) - \frac{1}{2}\tr B^2 - 4\tr(S^2).$
\end{enumerate}
\end{proposition}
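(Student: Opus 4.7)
The plan is to exploit the $(0,2)$-version of the soliton equation $\hess f = -q_\varphi - \tfrac{1}{3}\lambda g$ together with the explicit matrix formulas of Proposition 4.5, testing it in the distinguished directions $e_7$ and $\mathfrak h$. The hypothesis $f(r) = ar+b$ with $\nabla r = e_7$ gives two standing facts used repeatedly: $\nabla f = a e_7$, and since $r$ is a distance function, $\nabla_{e_7}e_7 = 0$ and $\hess r(e_7, \cdot) \equiv 0$, so $\hess f(e_7, \cdot) \equiv 0$.

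For part (1), I would substitute $(e_7, X)$ with $X \in \mathfrak h$ into the soliton equation. The left-hand side vanishes; the cross-term $g(e_7, X)$ vanishes; and by the block form in Proposition 4.5(1) we have $\tau_\varphi^2(e_7, X) = 0$. What remains is $\ric_\varphi(e_7, X) = -\Div(S)(X)$ by Proposition 4.5(2), forcing $\Div(S)(X) = 0$. For part (3), I would set $X = Y = e_7$; the same observations together with $\ric_\varphi(e_7,e_7) = -\tr(S^2)$, $\tau_\varphi^2(e_7,e_7) = 0$, and $\scal_\varphi = \scal^H - \tr(S^2)$ collapse the equation to $\lambda = \scal^H + 2\tr(S^2)$.

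For part (2), the key ingredient is a Koszul computation: using $[e_7, e_j] = D(e_j)$, $[e_i, e_7] = -D(e_i)$, and $[\mathfrak h, \mathfrak h] \subset \mathfrak h$, one gets
\[
g(\nabla_{e_i}e_7, e_j) = -\tfrac{1}{2}g((D+D^t)(e_i), e_j) = -g(S(e_i), e_j) \quad (i,j \leq 6),
\]
i.e., $\nabla_{e_i}e_7 = -S(e_i)$. Extending $S$ to $\mathfrak g$ by $S(e_7) := 0$ and applying the divergence formula of Lemma 2.5 at $X = e_7$: the $i = 7$ summand drops because $\nabla_{e_7}e_7 = 0$ and $S(e_7) = 0$; for $i \leq 6$, metric compatibility gives $g(\nabla_{e_i}S(e_i), e_7) = -g(S(e_i), \nabla_{e_i}e_7) = \|S(e_i)\|^2$, and $g(S(\nabla_{e_i}e_i), e_7) = 0$ since $S$ preserves $\mathfrak h$. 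Summing yields $\tr(S^2)$.

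For part (4), I would take the trace of the operator form of the soliton equation to obtain $\Delta f = -\tr Q_\varphi - \tfrac{7}{3}\lambda$ (equivalently Lemma 2.18(2)). Substituting the block formulas from Proposition 4.5(4) (noting $P$ contributes no trace and that by part (1) the off-diagonal $P$-entries involving $\mathfrak h$ are gone), and using $JS = SJ$ (because $D$ is the real form of a complex-linear map in $\mathfrak{sl}(3,\mathbb C)$, so $S$ commutes with $J$) to compute $\tr\tau_\varphi^2 = -4\tr(S^2) + 4\tr(JSB) + \tr B^2$, the expression simplifies to the stated identity after substituting $\lambda$ from part (3). The main technical step throughout is the Koszul identification $\nabla_{e_i}e_7 = -S(e_i)$, which bridges the Riemannian geometry of the extension and the algebraic operators $D, S, B$ appearing in Proposition 4.5; once it is in hand, each of (1)--(4) reduces to direct bookkeeping.
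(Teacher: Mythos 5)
Your route through parts (1), (3), and (4) is the same as the paper's: evaluate the soliton equation on $e_7$, use $\hess f(e_7,\cdot)=0$ together with the block forms of $\ric_D$ and $\tau_D^2$ from Proposition 4.5 to read off (1) and (3), and take the trace of the operator equation and substitute $\lambda$ for (4). The one genuine difference is part (2). You compute $\Div(S)(e_7)$ directly from the divergence formula of Lemma 2.5, using the Koszul identity $\nabla_{e_i}e_7=-S(e_i)$ to reduce the sum to $\sum_i\|S(e_i)\|^2=\tr(S^2)$; the paper instead identifies $S=-\hess r$ via the shape-operator relation $T=\nabla_{\cdot}e_7=-S$ of [Proposition 2.7, \cite{HPW15}] and then applies the Bochner formula $\Div(\hess r)=\ric(\nabla r)+\nabla\Delta r$, killing the second term because $\Delta r$ is constant on a homogeneous one-dimensional extension. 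The two computations are equivalent --- yours is in effect a hands-on proof of the Bochner identity in this special case --- and both pivot on the same key fact $\nabla_X e_7=-S(X)$, which you derive from Koszul and the paper imports from \cite{HPW15}; your version is more self-contained, the paper's is shorter and makes the role of homogeneity explicit. One caveat on (4): carrying out the trace bookkeeping explicitly (using $\tr Q_D$ from Proposition 4.5(4) with $\tr S=0$) yields $\Delta f=-\scal^H-2\tr(JSB)-\tfrac{1}{2}\tr B^2-4\tr(S^2)$, i.e.\ an extra $-\scal^H$ relative to the stated identity, which disappears only when $\mathfrak h$ is abelian; this affects the paper's own one-line derivation of (4) in exactly the same way, so it is not a defect of your approach, but you should not assert that the simplification lands on the stated formula without that term.
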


\begin{proof} By Proposition 4.5 (2) we have
$$\ric_D = \begin{bmatrix} \ric^H & \\ & 0\end{bmatrix} + \begin{bmatrix} \frac{1}{2}[D, D^t] & \\ & -\frac{1}{4}\tr((D + D^t)^2) \end{bmatrix} - P,$$ where $P$ is the matrix with $0$ entries for all $(i, j)$ except for the $(i, 7), (7,i)$-entries, where it is $\Div(S)(e_i)$ for $i = 1, ... , 6$. Since $\hess f(\nabla r) = a\nabla_{\nabla r}\nabla r = 0$, the gradient Laplacian soliton equation applied to $\nabla r = e_7$ becomes \begin{equation}\label{eq:4.1} 0 = -\ric_D(e_7) - \frac{1}{2}\tau_D^2(e_7) + \frac{1}{3}(\scal_D - \lambda)I(e_7)\, \, \, \, \, \end{equation} By Lemma 4.1 (1) and equation (\ref{eq:4.1}), we get \begin{align*}-\Div(S)(e_i) &= \ric_D(e_7, e_i) \\ &= -\frac{1}{2}g(\underbrace{\tau_D^2(e_7)}_{= 0}, e_i) + \frac{1}{3}(\scal_D - \lambda)\underbrace{g(e_7, e_i)}_{= 0} = 0  \end{align*} for $i = 1, ..., 6$. Thus $$\Div(S)(X) = 0 \, \, \, \, \, \forall \, \, X \in \mathfrak h.$$

Recall that by [Proposition 2.7, \cite{HPW15}], the shape operator $T(X) = \nabla_X^{G_D} e_7$ is related to symmetrization $S$ by $T = -S$. So with $e_7 = \nabla r$, we have $$S(X) = -T(X) = -\nabla_X\nabla r.$$ Then $$\Div(S)(\nabla r) = -\ric_D(\nabla r, \nabla r) - D_{\nabla r}(\Delta r) = -\ric_D(\nabla r, \nabla r) = -\ric_D(e_7, e_7) = \tr(S^2),$$ where the first equality follows from a Bochner formula [Lemma 2.1, \cite{PW09}]; the second equality follows from $\Delta r$ being constant on one-dimensional extensions; and the last equality follows from Lemma 4.1 (1).

The expression for $\lambda$ is obtained from solving for $\lambda$ in equation (\ref{eq:4.1}) and using the expression for $\scal_D$ from Proposition 4.5 (3). Finally, $\Delta f$ is obtained from taking the trace of the soliton equation $\hess f = -Q_D - (1/3)\lambda I$ and substituting the expression in (3) for $\lambda$. 
\end{proof}

\begin{remark}
Equation (\ref{eq:4.1}) in the proof of Proposition 4.5 requires that the gradient soliton has potential function of the form $f = ar + b$ and that $\nabla r = e_7$. In particular, the hypothesis $\nabla r = e_7$ is needed to obtain the explicit expressions for $\lambda$ and $\Delta f$ in terms of $S$.  
\end{remark}

\begin{corollary}
If $\mathfrak h$ is an abelian ideal in addition to the hypotheses of Proposition 4.7, then $$\lambda = -2\scal_D.$$ That is, such a gradient soliton must be expanding and is steady if and only if $\varphi$ is torsion-free. Moreover, 
\begin{enumerate}[(i)]
    \item $\Delta f = 4\scal_D$;
    \item $-\frac{1}{2}\Div \tau_D^2(e_i) = \begin{cases} 0 & i = 1, ..., 6\\
    -\tr(S^2) = \scal_D &i = 7; \, \, \, \, \, ( e_7 = \nabla r)
    \end{cases}.$
\end{enumerate}
\end{corollary}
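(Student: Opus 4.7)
The plan is to derive all the claims by specializing Propositions 4.5 and 4.7 to the abelian setting. The abelian hypothesis on $\mathfrak{h}$ has two immediate consequences: since all brackets within $\mathfrak{h}$ vanish, the induced left-invariant metric on $H$ is flat, so $\scal^H = 0$ and $\ric^H = 0$; and the Chevalley--Eilenberg differential $d_{\mathfrak{h}}$ vanishes identically, so $B = *_{\mathfrak{h}} d_{\mathfrak{h}} \rho^- = 0$.

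First I would establish the main identity. Combining Proposition 4.5 (3) with $\scal^H = 0$ gives $\scal_D = -\tr(S^2)$, while Proposition 4.7 (3) under the same substitution becomes $\lambda = 2\tr(S^2)$; hence $\lambda = -2\scal_D$. Since Lemma 2.18 (1) yields $\scal_D \leq 0$ for closed $G_2$-structures, $\lambda \geq 0$, and $\lambda = 0$ is equivalent to $\scal_D = 0$, which by $\scal_\varphi = -\tfrac{1}{2}|\tau_\varphi|^2$ is equivalent to $\tau_\varphi = 0$, i.e., $\varphi$ is torsion-free. For part (i), substituting $B = 0$ into the formula $\Delta f = -2\tr(JSB) - \tfrac{1}{2}\tr B^2 - 4\tr(S^2)$ of Proposition 4.7 (4) makes the first two terms vanish, leaving $\Delta f = -4\tr(S^2) = 4\scal_D$.

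For part (ii), I would invoke the Key Lemma (equation (2.12)): $-\tfrac{1}{2}\Div \tau_D^2(\cdot) = g(\ric_D(\nabla f), \cdot)$ in the homogeneous setting. Since $\nabla f = a\nabla r$ is proportional to $e_7$, applying Lemma 4.1 (2) converts this into $-a\Div(S)(\cdot)$ on $\mathfrak{h}$, which vanishes on $e_1, \ldots, e_6$ by Proposition 4.7 (1); for $e_7$, Lemma 4.1 (1) together with Proposition 4.7 (2) yields $-a\tr(S^2) = a\scal_D$, matching the claim under the normalized choice $a = 1$. No step presents any real difficulty: the substantive work is already packaged in Propositions 4.5 and 4.7, and the proof amounts to collapsing those general formulas under the abelian simplifications.
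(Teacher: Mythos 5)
Your proof is correct and follows essentially the same route as the paper: specialize Propositions 4.5 and 4.7 using the vanishing of $B$, $\ric^H$, $\scal^H$, and the traces in the abelian case, then obtain (ii) from the Key Lemma together with Lemma 4.1. Your explicit remark that the Key Lemma produces $g(\ric_D(a\nabla r),\cdot)$, so that (ii) as stated requires the normalization $a=1$, is a point the paper glosses over by writing $\ric_D(\nabla r, e_i)$ directly.
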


\begin{proof} In the setting of almost abelian solvmanifolds admitting closed $G_2$-structure $\varphi$, the terms $B, \tr D, \tr D^t, \tr S, \ric^H, \scal^H$ are all $0$. The formulas for $\lambda$ and $\Delta f$ immediately follow from these observations and Proposition 4.7. Since $\scal^H = 0$, $\scal_D = -\tr(S^2)$. By the Key Lemma and Lemma 4.1 (1) and Lemma 4.1 (2), we have $$-\frac{1}{2}\Div\tau_D^2(e_i) = \ric_D(\nabla r, e_i) = -\Div (S)(e_i),$$ which by Proposition 4.7 is $0$ for $i = 1, ... , 6$ and $-\tr(S^2)$ for $i = 7$.  
\end{proof}

\begin{theorem}
Let $G_D$ be a Lie group with Lie algebra of the form $\mathfrak g = \mathfrak h \oplus_D \mathbb R e_7$ where $\mathfrak h$ is a codimension-one abelian ideal. Suppose $G_D$ admits closed gradient Laplacian soliton $$(\varphi = \omega \wedge e^7 + \rho^+, \nabla f, \lambda)$$ where $G_D$ is a one-dimensional extension as in the Structure Theorem. In particular, suppose the potential function is either of the form $f = ar + b$ or $f(x, y) = ar(x) + v(y)$ with $\nabla r= e_7$. Then the soliton is steady, $\varphi$ is torsion-free, and $G_D$ is flat.
\end{theorem}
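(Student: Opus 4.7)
My plan is to evaluate the gradient soliton equation at $e_7 = \nabla r$ and then at directions in $\mathfrak{h}$, reducing the problem to a trace identity that forces $\scal_D = 0$. The first observation unifies both potential-function cases: writing $\nabla f = ae_7$ in case (a) and $\nabla f = ae_7 + \nabla v$ with $\nabla v \in T\mathbb{R}^k$ in case (b), one has $\hess f(e_7) = \nabla_{e_7}\nabla f = 0$, since $\nabla_{e_7}e_7 = 0$ (as $e_7 = \nabla r$ is geodesic) and, in case (b), $\nabla_{e_7}\nabla v = 0$ by the Riemannian product structure $G_D = N \times \mathbb{R}^k$ (the field $\nabla v$ is invariant along the $N$-factor). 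Feeding this into the $(1,1)$-form of the soliton equation and using the matrix formula for $Q_D$ from Proposition~4.5(4), together with the almost abelian simplifications $\scal^H = 0$ and $\tr S = 0$ (the latter from $D$ being the real representation of an element of $\mathfrak{sl}(3,\mathbb{C})$), yields $\lambda = -2\scal_D$ and $\Div(S)(e_i) = 0$ for $i = 1,\dots,6$. Thus Corollary~4.8 extends to both cases, and in particular $\Delta f = 4\scal_D$.

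Case (a) concludes immediately: $\Delta f = a\Delta r = -a\tr S = 0$ forces $\scal_D = 0$. For case (b) I would apply the soliton equation in the two families of directions that survive the product decomposition $\mathfrak{h} = \mathfrak{h}_N \oplus \mathbb{R}^k$ (with $D|_{\mathbb{R}^k} = 0$, forced by the product). In the $\mathfrak{h}_N$-directions, $\hess f(e_i) = -aS_N(e_i)$, and after substituting the explicit form of $Q_H$ the soliton equation reads
\[
-aS_N = -\tfrac{1}{2}[D_N, D_N^t] + 2S_N^2 - \tr(S^2)\,I_n,\quad n := \dim\mathfrak{h}_N,
\]
whose trace (using $\tr S_N = 0$ and $\tr[D_N, D_N^t] = 0$) yields $(2-n)\tr(S^2) = 0$. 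So $\tr(S^2) = 0$ whenever $n \neq 2$, equivalently $k \neq 4$.

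The main obstacle is the borderline case $n = 2$, $k = 4$, where the above trace identity is vacuous. Here the condition that $D$ be the real representation of some $A \in \mathfrak{sl}(3,\mathbb{C})$ with a $4$-dimensional real kernel forces $A$ to have complex rank one; combined with $\tr A = 0$, this gives $A^2 = 0$ and hence $D^2 = 0$. Substituting $D_N^2 = 0$ into the $\mathfrak{h}_N$-equation collapses it to $-aS_N = D_N^t D_N - \tr(S^2)\,I_2$, and a direct inspection in an orthonormal basis of $\mathfrak{h}_N$ adapted to the rank-one operator $D_N$ shows that this $2\times 2$ system has no solution with $D_N \neq 0$, contradicting the non-torsion-free branch. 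Thus $\tr(S^2) = 0$ in every case, giving $S = 0$, hence $D$ skew-symmetric, $\scal_D = 0$, $\lambda = -2\scal_D = 0$ (steady), and $|\tau_\varphi|^2 = -2\scal_\varphi = 0$ so $\varphi$ is torsion-free. Finally, a torsion-free $G_2$-structure has Ricci-flat associated metric; by the Alekseevskii--Kimelfeld theorem any homogeneous Ricci-flat manifold is flat, yielding the flatness of $G_D$.
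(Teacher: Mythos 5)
Your argument for the potential $f=ar+b$ is exactly the paper's: evaluate the soliton equation on $e_7=\nabla r$ to get $\lambda=-2\scal_D$, take the full trace to get $\Delta f=4\scal_D=-4\tr(S^2)$, and play this against $\hess f=-aS$ (shape operator) so that $\Delta f=-a\tr S=0$, forcing $\tr(S^2)=0$, $S=0$, hence $\ric_D=0$ and flatness by Alekseevski\u{\i}--Kimel\cprime fel\cprime d. Where you genuinely diverge is the case $f(x,y)=ar(x)+v(y)$: the paper disposes of it in one line by asserting $\hess v=0$, so that $\hess f=a\hess r$ and the first computation repeats verbatim, whereas you leave $\hess v$ undetermined and instead restrict the soliton equation to the $\mathfrak h_N$-block of the product splitting. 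This costs you the borderline case $\dim\mathfrak h_N=2$ (where your trace identity $(2-n)\tr(S^2)=0$ is vacuous) but buys you independence from the claim $\hess v=0$, which the paper does not really justify --- Petersen--Wylie's case (3) only gives $\hess v=q|_{\mathbb R^k}$, a constant tensor, so a priori $v$ could be quadratic --- so your route is arguably the more careful one. Your rescue of the $n=2$, $k=4$ case is correct: a traceless rank-one $A\in\mathfrak{sl}(3,\mathbb C)$ satisfies $A^2=0$, the block equation collapses to $-aS_N=D_N^tD_N-\tr(S^2)I_2$, and in an orthonormal basis adapted to $\ker D_N=\operatorname{im}D_N$ the right-hand side has diagonal $(-b^2/2,\,b^2/2)$ against the trace-free, purely off-diagonal left-hand side, forcing $b=0$. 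Two steps you should spell out: (i) the assertion that $D|_{\mathbb R^k}=0$ (and $D^t|_{\mathbb R^k}=0$, which you need so that $D$ preserves $\mathfrak h_N$ and $[D,D^t]|_{\mathfrak h_N}=[D_N,D_N^t]$, $S^2|_{\mathfrak h_N}=S_N^2$) requires an argument --- it follows because the Euclidean directions are spanned by parallel fields, so $\nabla_ZX=\langle SZ,X\rangle e_7=0$ and $\nabla_{e_7}X=AX=0$ force $SX=AX=0$; and (ii) in the $k=4$ case the identification of $\mathbb R^4$ with the (necessarily complex, hence even-dimensional) kernel of $D$ is what licenses the rank-one reduction.
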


\begin{proof}
In the case of almost abelian solvmanifolds, by Proposition 4.5 we have $$\ric_D = \begin{bmatrix} \frac{1}{2}[D, D^t] & \\ & -\frac{1}{4}\tr(D + D^t)^2 \end{bmatrix} \, \, \, \, \, \text{and} \, \, \, \, \, \tau_D^2 = \begin{bmatrix} -(D + D^t)^2 & \\ & 0 \end{bmatrix}.$$ [These matrix expressions also follow from results of Lauret (see \cite{Arr13}, \cite{Lau11}, and \cite{Lau17a}).] Suppose $(\varphi, \nabla f, \lambda)$ is a gradient Laplacian soliton where the potential function is of the form $f = ar + b$ with $\nabla r = e_7$. Since $\mathfrak h$ is abelian, Corollary 4.9 says that $\lambda = - 2\scal_D$. Substituting this expression for $\lambda$ in the soliton equation gives

\begin{align*}
    \hess f = a\hess r &= -\ric_D - \frac{1}{2}\tau_D^2 + \scal_DI \\ &= -\begin{bmatrix} \frac{1}{2}[D,D^t] & \\ & \scal_D\end{bmatrix} - \frac{1}{2}\begin{bmatrix} -(D + D^t)^2 & \\ & 0 \end{bmatrix} + \scal_D I_{7 \times 7} 
\end{align*} Taking the trace yields $$\Delta f = - \scal_D + 2\tr(S^2) + 7\scal_D = 4\scal_D$$ where we used that $\tr[D, D^t] = 0$. By Proposition 4.5 (3), $\scal_D = -\tr(S^2)$, and so  $$\Delta f = -4\tr(S^2).$$ Recall the shape operator $T = \nabla_{\cdot}e_7 = -S$ and so $$-S(X) = T(X) = \nabla_{X}e_7 = \nabla_X\nabla r =  \frac{1}{a}\nabla_X \nabla f = \frac{1}{a}\hess f(X).$$ Hence $\hess f = -aS$ and taking the trace yields $\Delta f = -a\tr(S) = 0,$ where the last equality follows from $\tr(D) =  \tr(D^t) = 0$. Putting this together with the expression obtained for $\Delta f$ above gives $\tr(S^2) = 0.$ Thus $S = 0$ and by Proposition 4.5 (3) we get $\scal_D = 0$.

Note $S = 0$ if and only if $D = -D^t$, i.e., $D$ is antisymmetric. This together with $\tr(S^2) = 0$ gives $\ric_D = 0$, i.e., the space is Ricci-flat. By a result of Alekseevski\u{\i}-Kimel\cprime fel\cprime d in \cite{AK75}, Ricci-flat homogeneous spaces are flat, and so $(G_D, g_\varphi)$ is flat. Moreover, $\lambda = -2\scal_D = 0 $, i.e., the soliton is steady. Furthermore, since $\scal_D = 0$, $\varphi$ is torsion-free.

In the case where the potential function is of the form $f(x, y) = ar(x) + v(y)$, recall that the function $v$ on the Euclidean factor is in $\{\hess v = 0\}$. Then $\hess f = a\hess r$ and so we can run through the same arguments above to get that the space is flat, the soliton is steady, and $\varphi$ is torsion-free. 
\end{proof}

\begin{remark}
If we start with a flat space, we can choose potential function $f = ar + b$ such that the gradient Laplacian soliton equation is satisfied by taking $r$ to be the coordinate of one of the unit basis vectors, $\nabla r = e_i$, and $\lambda = 0$. One can also construct a Gaussian on flat space $\mathbb R^7$ (see, e.g., the case $\mathfrak n_1$ in Section~\ref{sec:3}).
\end{remark}

\subsection{Case: $\nabla r \ne \pm e_7$}

\begin{theorem} Let $D$ and $D'$  be derivations of $6$-dimensional subalgebras $\mathfrak h$ and $\mathfrak h'$ of a $7$-dimensional Lie algebra $(\mathfrak g, [\cdot, \cdot]_{\mathfrak g})$ defined by $D(X) = [e_7, X]$ and $D(Y) = [\nabla r, Y]$, respectively. Suppose $\mathfrak h$ is codimension-one abelian ideal. 
    Let $M$ be the Lie group corresponding to Lie algebra $\mathfrak g$ with decompositions $$ \mathfrak h \oplus_D \mathbb Re_7 = \mathfrak h' \oplus_{D'} \mathbb R\nabla r,$$ where $\nabla r \ne \pm e_7$. Suppose $(\varphi, \nabla f, \lambda)$ is a closed gradient Laplacian soliton and $(M, g_\varphi)$ is one-dimensional extension with potential function either of the form $f = ar + b$ or $f(x, y) = ar(x) + v(y)$. Then $(M, g_\varphi)$ must be flat, the soliton must be steady, and $\varphi$ is torsion-free.
\end{theorem}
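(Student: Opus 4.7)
The plan is to exploit the explicit almost abelian matrix formulas from the first decomposition together with the tracelessness of the derivation $D'$ from the second in order to force Ricci-flatness.

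Since $\mathfrak{h}$ is an abelian ideal, the formulas from Proposition 4.5 simplify: $B = 0$, $\ric^H = 0$, and the divergence term $P$ vanishes (as $H$ is flat abelian and $S$ is a constant-coefficient operator), so $\ric_D = \begin{bmatrix} \frac{1}{2}[D, D^t] & \\ & -\tr S^2 \end{bmatrix}$, $\tau_D^2 = \begin{bmatrix} -4S^2 & \\ & 0 \end{bmatrix}$, and $\scal_D = -\tr S^2$, with $S = (D + D^t)/2$. Decompose the unit vector $\nabla r = Y + \beta e_7$ with $Y \in \mathfrak{h}$; since $\nabla r \neq \pm e_7$, $Y \neq 0$. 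Because $r$ is a distance function on a one-dimensional extension, $\nabla_{\nabla r}\nabla r = 0$, so $\hess f(\nabla r) = 0$.

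Applying the $(1,1)$-soliton equation to $\nabla r$ and splitting into $\mathfrak{h}$ and $\mathbb{R}e_7$ components, the $e_7$-component forces $\lambda = -2\scal_D$ when $\beta \neq 0$. If $\beta = 0$, I instead apply the soliton equation in the direction $e_7$: a Koszul calculation gives $\nabla_{e_7}Y = \tfrac{1}{2}(D - D^t)Y \in \mathfrak{h}$, so $\hess f(e_7) \in \mathfrak{h}$, while the right-hand side at $e_7$ lies in $\mathbb{R}e_7$; orthogonality of $\mathfrak{h}$ and $\mathbb{R}e_7$ forces both sides to vanish separately, yielding again $\lambda = -2\scal_D$.

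Next I compute $\Delta f$ in two ways. Taking the trace of the soliton equation and substituting $\lambda = -2\scal_D$, $\scal_D = -\tr S^2$ gives $\Delta f = 4\scal_D$. Independently, $\Delta f = a\Delta r$, and by [Proposition 2.7, \cite{HPW15}] applied to the second decomposition the shape operator of the level hypersurface of $r$ is $-S'$, so $\Delta r = -\tr S'$. Since $\varphi$ is closed, Lemma 4.3(v) forces $D' \in \mathfrak{sl}(3, \mathbb{C})$, whose real representation is traceless; hence $\tr S' = 0$ and $\Delta f = 0$. Combining yields $\scal_D = 0$, so $\tr S^2 = 0$ and $S = 0$; then $D = -D^t$ makes $[D, D^t] = 0$, giving $\ric_D = 0$. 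By the Alekseevski\u{\i}--Kimel\cprime fel\cprime d theorem, $(M, g_\varphi)$ is flat; $\scal_D = 0$ makes $\varphi$ torsion-free by [Proposition 2.2, \cite{Lau17a}]; and $\lambda = -2\scal_D = 0$ makes the soliton steady. The case $f(x, y) = ar(x) + v(y)$ on $M = N \times \mathbb{R}^k$ reduces to the above since the Euclidean gradient $\nabla v$ is orthogonal to $\nabla r$ and lives in a flat factor, so $\hess f(\nabla r) = 0$ still holds and the argument restricts to the $N$-factor. The main obstacle to anticipate is the sub-case $\beta = 0$ in the second paragraph, where one must exploit the orthogonal splitting $\mathfrak{g} = \mathfrak{h} \oplus \mathbb{R}e_7$ to conclude that both sides of the $e_7$-direction soliton equation vanish separately.
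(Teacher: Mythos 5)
Your argument reaches the right conclusion and, in its decisive middle step, takes a genuinely different route from the paper. Both proofs first extract $\lambda = -2\scal_D$ from the vanishing of $\hess f$ in the radial direction (the paper gets this from $\hess f(e_7,e_7)=0$; your component-splitting of the soliton equation at $\nabla r$, including the separate treatment of the $\beta=0$ subcase, is a sound equivalent). Where the paper then introduces a nonzero vector $\eta\in\mathfrak h\cap\Span\{\nabla r,e_7\}$, proves $D(\eta)=D^t(\eta)=0$ and $\hess f(\eta,\eta)=0$, and evaluates the soliton equation at $(\eta,\eta)$ to force $\scal_D=\lambda$ and hence $\tr(S^2)=0$, you instead compute $\Delta f$ twice: once as $4\scal_D$ from the trace of the soliton equation, and once as $a\Delta r=-a\tr S'=0$ via the shape operator of the second splitting. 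This is exactly the mechanism the paper uses in the $\nabla r=\pm e_7$ case (Theorem 4.10), so your proof has the virtue of unifying the two cases; the cost is that you must control the derivation $D'$ of the second decomposition, which the paper's $\eta$-argument avoids entirely.

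The one step that does not survive scrutiny as written is the claim that ``Lemma 4.3(v) forces $D'\in\mathfrak{sl}(3,\mathbb C)$.'' Lemma 4.3 characterizes closedness of $\varphi$ relative to the normal form $\varphi=\omega\wedge e^7+\rho^+$ adapted to $\mathfrak h\oplus_D\mathbb Re_7$; nothing in the hypotheses puts $\varphi$ in that form relative to $\mathfrak h'\oplus_{D'}\mathbb R\nabla r$, so the lemma does not apply to $D'$ without further argument. Fortunately the only fact you need is $\tr D'=0$, and that follows from unimodularity of $\mathfrak g$: $\tr\ad_X=0$ for $X\in\mathfrak h$ because $\mathfrak h$ is an abelian ideal, and $\tr\ad_{e_7}=\tr D=0$ because $D$ is the real representation of an element of $\mathfrak{sl}(3,\mathbb C)$ (this is where closedness of $\varphi$ enters, via Lemma 4.3 applied to the \emph{first} decomposition). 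Hence $\tr\ad_{\nabla r}=0$, and since $\ad_{\nabla r}(\nabla r)=0$ this trace equals $\tr D'$. With that repair, $\Delta r=0$, so $\scal_D=0$, $\tr(S^2)=0$, $S=0$, and the endgame ($[D,D^t]=0$, $\ric_D=0$, flatness by Alekseevski\u{\i}--Kimel\cprime fel\cprime d, $\lambda=-2\scal_D=0$, and torsion-freeness from $\scal_D=0$) goes through as you wrote it.
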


\begin{proof} To prove this, we make several observations leading to $\tr(S^2) = 0$.
For any two vectors $X, Y \in \mathfrak g$, we can write $X = h_1 + ae_7$ and $Y = h_2 + be_7$. In the case of decomposition $\mathfrak h \oplus_D \mathbb Re_7$, we have \begin{align*} [X, Y] &= [h_1, h_2] + a[h_1, e_7] + b[e_7, h_2] + ab[e_7, e_7] \\ &= [h_1, h_2] -aD(h_1) + bD(h_2) \in \mathfrak h.\end{align*} By similar arguments in the case of decomposition $\mathfrak h' \oplus_{D'}\mathbb R \nabla r$, we also get $[X, Y] \in \mathfrak h'$. Thus $[X, Y] \in \mathfrak h \cap \mathfrak h'$ $\forall$ $X, Y \in \mathfrak g.$ This shows that $D: \mathfrak h \rightarrow \mathfrak h \cap \mathfrak h' \subset \mathfrak h$ and $D':\mathfrak h' \rightarrow \mathfrak h \cap \mathfrak h'$. In particular, $[e_7, \nabla r], [\nabla r, e_7] \in \mathfrak h \cap \mathfrak h'.$
    
Note that \begin{align*}\hess f(e_7, e_7) = g(\nabla_{e_7}\nabla f, e_7) &= ag(\nabla_{e_7}\nabla r , e_7) \\ &= ag([e_7, \nabla r] + \nabla_{\nabla r}e_7, e_7) =^* ag(\nabla_{\nabla r} e_7, e_7 ) = \frac{a}{2}D_{\nabla r}\|e_7\|^2 = 0,  \end{align*} where equality $*$ follows from the fact that $[e_7, \nabla r] \perp e_7$ as $[e_7, \nabla r] \in \mathfrak h \cap \mathfrak h' \subset \mathfrak h.$ So $\hess f(e_7, e_7) = 0$. Then from the soliton equation, $$0 = \hess f (e_7, e_7) = -\ric_D(e_7, e_7) - \frac{1}{2}\tau_D^2(e_7, e_7) + \frac{1}{3}(\scal_D - \lambda).$$ Since $\tau_D^2(e_7) = 0$ by Proposition 4.5 (1), the preceding soliton equation holds if and only if \begin{equation}\label{eq:4.2} \frac{1}{3}(\scal_D - \lambda) = \ric_D(e_7, e_7) = - \tr(S^2).\end{equation} Moreover, since $-\tr(S^2) = -\frac{1}{4}\tr(D + D^t)^2 = \scal_D$, we get $$\lambda = -2\scal_D.$$ 

We claim $e_7 \in \ker \hess r$. For $i \ne 7$, we also have from the soliton equation that \begin{align*} a\hess r(e_7, e_i) = \hess f(e_7, e_i) &= -g(\ric_D(e_7), e_i) - \frac{1}{2}g(\tau_D^2(e_7), e_i) + g(\frac{1}{3}(\scal_D - \lambda)e_7, e_i) \\ &= g(-(-\tr(S^2)e_7) - \tr(S^2)e_7, e_i) = 0.\end{align*} Thus $e_7 \in \ker \hess r$ and we have $\Span\{e_7, \nabla r\} \subset \ker \hess r$.

Recall $\mathfrak h = \Span\{e_1, ... , e_6\}$. Consider $ \mathfrak h \cap \Span\{\nabla r, e_7\}$, which is at least a one-dimensional subspace containing $\nabla r$, and suppose $\eta$ is in this intersection. Then we can write $\eta = \alpha \nabla r + \beta e_7$ for some $\alpha, \beta \in \mathbb R$. If $\eta = 0$, since $\nabla r$ and $e_7$ are both of unit length, we would get $\nabla r = \pm e_7$, contradicting our assumption. So $\eta \ne 0$.

We claim that both $D(\eta), D^t(\eta)$ are $0$. To show this, we first show $D^t(\eta) = 0$. We then show $S(\eta) = 0$, from which we get $D(\eta) = 0$. Since $\eta \in \mathfrak h$ and $D: \mathfrak h \rightarrow \mathfrak h \cap \mathfrak h'$, it follows that $D(\eta) \in \mathfrak h \cap \mathfrak h'$. By assumption, $e_7 \perp \mathfrak h$ and so $e_7 \perp \mathfrak h \cap \mathfrak h'$. Similarly, $\nabla r \perp \mathfrak h \cap \mathfrak h'$. Hence $D(\eta) \perp \eta$. More generally, $D(v) \perp \eta$ for any $v \in \mathfrak g$. This means $0 = g(D(v), \eta) = g(v, D^t(\eta))$ for any $ v \in \mathfrak g$. Thus $D^t(\eta) = 0$.

To show $S(\eta) = 0$, we need to following. \begin{enumerate}
    \item  $\nabla_{e_7}e_7 = 0$ since by Koszul formula $g(\nabla_{e_7}e_7, e_j) = g([e_j, e_7], e_7) = 0$ for all $j = 1, ..., 6$ as $[e_j, e_7] \in \mathfrak h$ for all $j = 1, ..., 6$; clearly $g(\nabla_{e_7}e_7, e_7) = 0$. 
    \item  We show $\nabla_{\nabla r}e_7 = 0$. Let $X$ be any invariant vector field. Then \begin{align*}
        g(\nabla_{\nabla r}e_7, X) &= \nabla_{\nabla r}(\underbrace{g(e_7, X)}_{\text{constant}}) - g(e_7, \nabla_{\nabla r}X) \\ &= -g(e_7,\nabla_{\nabla r}X) \\ &=  -[g(e_7, \underbrace{[\nabla r, X]}_{\in \mathfrak h \cap \mathfrak h'}) + g(e_7, \nabla_X\nabla r)] \\ &= - g(\nabla_X\nabla r, e_7) = -g(\nabla_{e_7}\nabla r, X) = 0,
    \end{align*} where the last equality follows from $e_7 \in \ker\hess r$.
\end{enumerate}Now recall that the shape operator corresponding to decomposition $\mathfrak h \oplus_D \mathbb Re_7$ is $$T(X) = \nabla_{X}e_7 = -S(X) = -2^{-1}(D + D^t)(X).$$ Then $$-S(\eta) = T(\eta) = \nabla_\eta e_7 = \nabla_{\alpha \nabla r + \beta e_7}e_7 = \alpha \nabla_{\nabla r}e_7 + \beta \nabla_{e_7}e_7 = 0.$$ Thus $S(\eta) = 0$. This together with $D^t(\eta) = 0$ yields $D(\eta) = 0$.

The soliton equation applied to $\eta$ is

$$\hess f(\eta, \eta) = -\ric_D(\eta, \eta) - \frac{1}{2}\tau_D^2(\eta, \eta) + \frac{1}{3}(\scal_D - \lambda)g(\eta, \eta).$$ Since $\eta \in \mathfrak h$, the operators $\ric_D$ and $\tau_D^2$ applied $\eta$ is equal to the restriction to their $6 \times 6$ diagonal blocks applied to $\eta$. These blocks only involve $D$ and $D^t$ and since $D(\eta) = D^t(\eta) = 0$, these operators applied to $\eta$ are $0$. So $\ric_D(\eta, \eta), \tau_D^2(\eta, \eta)$ are both $0$. As $\eta \in \Span\{\nabla r, e_7\} \subset \ker \hess r$,  $\hess f(\eta, \eta) = 0$. We get $$0 = \frac{1}{3}(\scal_D - \lambda)g(\eta, \eta).$$ Since $\eta \ne 0$, $g(\eta, \eta) = \|\eta\|^2 > 0$. So for the equality to hold, we must have $\frac{1}{3}(\scal_D - \lambda) = 0$, from which it follows that $\tr(S^2) = 0$ by \ref{eq:4.2}. We can now apply the same arguments as in the proof of Theorem 4.10 to conclude that the space is flat, the soliton is steady, and that $\varphi$ is torsion-free. 
\end{proof}

\noindent \textit{Acknowledgements.} I want to thank my PhD advisor Prof. William Wylie for his patience, guidance, and support in the process of my writing this paper.

\begin{bibdiv}
    \begin{biblist}

\bib{Arr13}{article}{
   author={Arroyo, Romina M.},
   title={The Ricci flow in a class of solvmanifolds},
   journal={Differential Geom. Appl.},
   volume={31},
   date={2013},
   number={4},
   pages={472--485},
   issn={0926-2245},
   review={\MR{3066027}},
   doi={10.1016/j.difgeo.2013.04.002},
}

\bib{AK75}{article}{
   author={Alekseevski\u{\i}, D. V.},
   author={Kimel\cprime fel\cprime d, B. N.},
   title={Structure of homogeneous Riemannian spaces with zero Ricci
   curvature},
   language={Russian},
   journal={Funkcional. Anal. i Prilo\v{Z}en.},
   volume={9},
   date={1975},
   number={2},
   pages={5--11},
   issn={0374-1990},
   review={\MR{0402650}},
}

\bib{Bry06}{article}{
   author={Bryant, Robert L.},
   title={Some remarks on $G_2$-structures},
   conference={
      title={Proceedings of G\"{o}kova Geometry-Topology Conference 2005},
   },
   book={
      publisher={G\"{o}kova Geometry/Topology Conference (GGT), G\"{o}kova},
   },
   date={2006},
   pages={75--109},
   review={\MR{2282011}},
}

\bib{CLN06}{book}{
   author={Chow, Bennett},
   author={Lu, Peng},
   author={Ni, Lei},
   title={Hamilton's Ricci flow},
   series={Graduate Studies in Mathematics},
   volume={77},
   publisher={American Mathematical Society, Providence, RI; Science Press
   Beijing, New York},
   date={2006},
   pages={xxxvi+608},
   isbn={978-0-8218-4231-7},
   isbn={0-8218-4231-5},
   review={\MR{2274812}},
   doi={10.1090/gsm/077},
}

\bib{CF11}{article}{
   author={Conti, Diego},
   author={Fern\'{a}ndez, Marisa},
   title={Nilmanifolds with a calibrated $G_2$-structure},
   journal={Differential Geom. Appl.},
   volume={29},
   date={2011},
   number={4},
   pages={493--506},
   issn={0926-2245},
   review={\MR{2811660}},
   doi={10.1016/j.difgeo.2011.04.030},
}

\bib{Fer87}{article}{
   author={Fern\'{a}ndez, Marisa},
   title={An example of a compact calibrated manifold associated with the
   exceptional Lie group $G_2$},
   journal={J. Differential Geom.},
   volume={26},
   date={1987},
   number={2},
   pages={367--370},
   issn={0022-040X},
   review={\MR{906398}},
}

\bib{FFM16}{article}{
   author={Fern\'{a}ndez, Marisa},
   author={Fino, Anna},
   author={Manero, V\'{\i}ctor},
   title={Laplacian flow of closed $G_2$-structures inducing nilsolitons},
   journal={J. Geom. Anal.},
   volume={26},
   date={2016},
   number={3},
   pages={1808--1837},
   issn={1050-6926},
   review={\MR{3511459}},
   doi={10.1007/s12220-015-9609-3},
}

\bib{FG82}{article}{
   author={Fern\'{a}ndez, M.},
   author={Gray, A.},
   title={Riemannian manifolds with structure group $G_{2}$},
   journal={Ann. Mat. Pura Appl. (4)},
   volume={132},
   date={1982},
   pages={19--45 (1983)},
   issn={0003-4622},
   review={\MR{696037}},
   doi={10.1007/BF01760975},
}

\bib{FR20}{article}{
   author={Fino, Anna},
   author={Raffero, Alberto},
   title={Remarks on homogeneous solitons of the $\rm G_2$-Laplacian flow},
   journal={C. R. Math. Acad. Sci. Paris},
   volume={358},
   date={2020},
   number={4},
   pages={401--406},
   issn={1631-073X},
   review={\MR{4134249}},
   doi={10.5802/crmath.39},
}

\bib{Fre13}{arXiv}{
  author={Freibert, Marco},
  title={Calibrated and parallel structures on almost Abelian Lie algebras},
  date={2013},
  eprint={1307.2542},
  archiveprefix={arXiv},
  primaryclass={math.DG},
}

\bib{Gar22}{arXiv}{
  author={Garrone, August\'in},
  title={New examples of divergence-free $G_2$-structures},
  date={2022},
  eprint={2205.08700},
  archiveprefix={arXiv},
  primaryclass={math.DG},
}

\bib{Gri21}{article}{
   author={Griffin, Erin},
   title={Gradient ambient obstruction solitons on homogeneous manifolds},
   journal={Ann. Global Anal. Geom.},
   volume={60},
   date={2021},
   number={3},
   pages={469--499},
   issn={0232-704X},
   review={\MR{4304859}},
   doi={10.1007/s10455-021-09784-3},
}

\bib{HKP22}{arXiv}{
  author={Haskins, Mark},
  author={Khan, Ilyas},
  author={Payne, Alec},
  title={Uniqueness of Asymptotically Conical Gradient Shrinking Solitons in $G_2$-Laplacian Flow},
  date={2022},
  eprint={2210.07954},
  archiveprefix={arXiv},
  primaryclass={math.DG},
}

\bib{HN21}{arXiv}{
  author={Haskins, Mark},
  author={Nordstr\"om, Johannes},
  title={Cohomogeneity-one solitons in Laplacian flow: local, smoothly-closing and steady solitons},
  date={2021},
  eprint={2112.09095},
  archiveprefix={arXiv},
  primaryclass={math.DG},
}

\bib{HPW15}{article}{
   author={He, Chenxu},
   author={Petersen, Peter},
   author={Wylie, William},
   title={Warped product Einstein metrics on homogeneous spaces and
   homogeneous Ricci solitons},
   journal={J. Reine Angew. Math.},
   volume={707},
   date={2015},
   pages={217--245},
   issn={0075-4102},
   review={\MR{3403459}},
   doi={10.1515/crelle-2013-0078},
}

\bib{Kar09}{article}{
   author={Karigiannis, Spiro},
   title={Flows of $G_2$-structures. I},
   journal={Q. J. Math.},
   volume={60},
   date={2009},
   number={4},
   pages={487--522},
   issn={0033-5606},
   review={\MR{2559631}},
   doi={10.1093/qmath/han020},
}

\bib{Kar20}{article}{
   author={Karigiannis, Spiro},
   title={Introduction to $G_2$ geometry},
   conference={
      title={Lectures and surveys on $G_2$-manifolds and related
      topics},
   },
   book={
      series={Fields Inst. Commun.},
      volume={84},
      publisher={Springer, New York},
   },
   date={2020},
   pages={3--50},
   review={\MR{4295852}},
   doi={10.1007/978-1-0716-0577-6-1},
}

\bib{Kri21}{article}{
   author={Krishnan, Anusha M.},
   title={Diagonalizing the Ricci tensor},
   journal={J. Geom. Anal.},
   volume={31},
   date={2021},
   number={6},
   pages={5638--5658},
   issn={1050-6926},
   review={\MR{4267620}},
   doi={10.1007/s12220-020-00495-y},
}

\bib{Lau17a}{article}{
   author={Lauret, Jorge},
   title={Laplacian flow of homogeneous $G_2$-structures and its solitons},
   journal={Proc. Lond. Math. Soc. (3)},
   volume={114},
   date={2017},
   number={3},
   pages={527--560},
   issn={0024-6115},
   review={\MR{3653239}},
   doi={10.1112/plms.12014},
}

\bib{Lau17b}{article}{
   author={Lauret, Jorge},
   title={Laplacian solitons: questions and homogeneous examples},
   journal={Differential Geom. Appl.},
   volume={54},
   date={2017},
   number={part B},
   part={part B},
   pages={345--360},
   issn={0926-2245},
   review={\MR{3693936}},
   doi={10.1016/j.difgeo.2017.06.002},
}

\bib{Lau11}{article}{
   author={Lauret, Jorge},
   title={Ricci soliton solvmanifolds},
   journal={J. Reine Angew. Math.},
   volume={650},
   date={2011},
   pages={1--21},
   issn={0075-4102},
   review={\MR{2770554}},
   doi={10.1515/CRELLE.2011.001},
}

\bib{LN20}{article}{
   author={Lauret, Jorge},
   author={Nicolini, Marina},
   title={The classification of ERP $G_2$-structures on Lie groups},
   journal={Ann. Mat. Pura Appl. (4)},
   volume={199},
   date={2020},
   number={6},
   pages={2489--2510},
   issn={0373-3114},
   review={\MR{4165690}},
   doi={10.1007/s10231-020-00977-4},
}

\bib{LW13}{article}{
   author={Lauret, Jorge},
   author={Will, Cynthia},
   title={On the diagonalization of the Ricci flow on Lie groups},
   journal={Proc. Amer. Math. Soc.},
   volume={141},
   date={2013},
   number={10},
   pages={3651--3663},
   issn={0002-9939},
   review={\MR{3080187}},
   doi={10.1090/S0002-9939-2013-11813-7},
}

\bib{Lee13}{book}{
   author={Lee, John M.},
   title={Introduction to smooth manifolds},
   series={Graduate Texts in Mathematics},
   volume={218},
   edition={2},
   publisher={Springer, New York},
   date={2013},
   pages={xvi+708},
   isbn={978-1-4419-9981-8},
   review={\MR{2954043}},
}

\bib{LW17}{article}{
   author={Lotay, Jason D.},
   author={Wei, Yong},
   title={Laplacian flow for closed ${\rm G}_2$ structures: Shi-type
   estimates, uniqueness and compactness},
   journal={Geom. Funct. Anal.},
   volume={27},
   date={2017},
   number={1},
   pages={165--233},
   issn={1016-443X},
   review={\MR{3613456}},
   doi={10.1007/s00039-017-0395-x},
}

\bib{Man15}{arXiv}{
  author={Manero, Victor},
  title={Construction of Lie algebras with special $G_2$-structures},
  date={2015},
  eprint={1507.07352},
  archiveprefix={arXiv},
  primaryclass={math.DG},
}

\bib{Nic18}{article}{
   author={Nicolini, Marina},
   title={Laplacian solitons on nilpotent Lie groups},
   journal={Bull. Belg. Math. Soc. Simon Stevin},
   volume={25},
   date={2018},
   number={2},
   pages={183--196},
   issn={1370-1444},
   review={\MR{3819121}},
   doi={10.36045/bbms/1530065008},
}

\bib{Nic22}{article}{
   author={Nicolini, Marina},
   title={New examples of shrinking Laplacian solitons},
   journal={Q. J. Math.},
   volume={73},
   date={2022},
   number={1},
   pages={239--259},
   issn={0033-5606},
   review={\MR{4395079}},
   doi={10.1093/qmath/haab029},
}

\bib{Pet16}{book}{
   author={Petersen, Peter},
   title={Riemannian geometry},
   series={Graduate Texts in Mathematics},
   volume={171},
   edition={3},
   publisher={Springer, Cham},
   date={2016},
   pages={xviii+499},
   isbn={978-3-319-26652-7},
   isbn={978-3-319-26654-1},
   review={\MR{3469435}},
   doi={10.1007/978-3-319-26654-1},
}

\bib{PR19}{article}{
   author={Podest\`a, Fabio},
   author={Raffero, Alberto},
   title={On the automorphism group of a closed $\rm G_2$-structure},
   journal={Q. J. Math.},
   volume={70},
   date={2019},
   number={1},
   pages={195--200},
   issn={0033-5606},
   review={\MR{3927848}},
   doi={10.1093/qmath/hay045},
}

\bib{PW09}{article}{
   author={Petersen, Peter},
   author={Wylie, William},
   title={Rigidity of gradient Ricci solitons},
   journal={Pacific J. Math.},
   volume={241},
   date={2009},
   number={2},
   pages={329--345},
   issn={0030-8730},
   review={\MR{2507581}},
   doi={10.2140/pjm.2009.241.329},
}

\bib{PW22}{article}{
   author={Petersen, Peter},
   author={Wylie, William},
   title={Rigidity of homogeneous gradient soliton metrics and related
   equations},
   journal={Differential Geom. Appl.},
   volume={84},
   date={2022},
   pages={Paper No. 101929, 29},
   issn={0926-2245},
   review={\MR{4457372}},
   doi={10.1016/j.difgeo.2022.101929},
}

    \end{biblist}
\end{bibdiv}

\bibliographystyle{amsplain}

\end{document}